\documentclass[12pt,twoside]{amsart}
\usepackage{amssymb,color,tikz} \usetikzlibrary{matrix}
\nonstopmode \textwidth=16.00cm \textheight=24.00cm \topmargin=-1.00cm
\oddsidemargin=0.25cm \evensidemargin=0.25cm \headheight=0.3cm
\headsep=0.5cm 
\numberwithin{equation}{section} \hyphenation{semi-stable}
%
%
\newcommand {\PP}{\mathbb{P}}
%
%
\DeclareMathOperator{\codim}{codim} 
\DeclareMathOperator{\Soc}{soc} \def\cocoa{{\hbox{\rm C\kern-.13em
      o\kern-.07em C\kern-.13em o\kern-.15em A}}}
\newtheorem{theorem}{Theorem}[section]
\newtheorem{lemma}[theorem]{Lemma}
\newtheorem{proposition}[theorem]{Proposition}
\newtheorem{corollary}[theorem]{Corollary}
\newtheorem{conjecture}[theorem]{Conjecture} \theoremstyle{definition}
\newtheorem{definition}[theorem]{Definition} \theoremstyle{remark}
\newtheorem{remark}[theorem]{Remark}
\newtheorem{example}[theorem]{Example}
\newtheorem{notation}[theorem]{Notation}
%
%
\definecolor{MyDarkGreen}{cmyk}{0.7,0,1,0}

\begin{document}
\title[The non-Lefschetz locus]%
{The non-Lefschetz locus}

\author[M.\ Boij]{Mats Boij} \address{Department of Mathematics, KTH
  Royal Institute of Technology, S-100 44 Stockholm, Sweden}
\email{boij@kth.se}

\author[J.\ Migliore]{Juan Migliore} \address{ Department of
  Mathematics, University of Notre Dame, Notre Dame, IN 46556, USA}
\email{migliore.1@nd.edu}

\author[R.\ M.\ Mir\'o-Roig]{Rosa M.\ Mir\'o-Roig} \address{Facultat
  de Matem\`atiques, Departament de Matem\`atiques i Inform\`atica,
  Gran Via des les Corts Catalanes 585, 08007 Barcelona, Spain}
\email{miro@ub.edu}

\author[U.\ Nagel]{Uwe Nagel} \address{Department of Mathematics,
  University of Kentucky, 715 Patterson Office Tower, Lexington, KY
  40506-0027, USA} \email{uwe.nagel@uky.edu}

\pagestyle{plain}
\begin{abstract}
  We study the weak Lefschetz property of artinian Gorenstein algebras
  and in particular of artinian complete intersections.  In
  codimension four and higher, it is an open problem whether all
  complete intersections have the weak Lefschetz property.
  
  For a given artinian Gorenstein algebra $A$ we ask what linear forms
  are Lefschetz elements for this particular algebra, i.e., which
  linear forms $\ell$ give maximal rank for all the multiplication
  maps $\times \ell: [A]_i \longrightarrow [A]_{i+1}$. This is a
  Zariski open set and its complement is the \emph{non-Lefschetz
    locus}.

  For monomial complete intersections, we completely describe the
  non-Lefschetz locus. For general complete intersections of
  codimension three and four we prove that the non-Lefschetz locus has
  the expected codimension, which in particular means that it is empty
  in a large family of examples. For general Gorenstein algebras of
  codimension three with a given Hilbert function, we prove that the
  non-Lefschetz locus has the expected codimension if the first
  difference of the Hilbert function is of decreasing type. For
  completeness we also give a full description of the non-Lefschetz
  locus for artinian algebras of codimension two.
\end{abstract}
\maketitle
\section{Introduction}

If $A = R/I$ is an artinian standard graded algebra over the
polynomial ring $R = k[x_1,\dots,x_n]$, where $k$ is a field, then $A$
is said to have the {\em Weak Lefschetz Property (WLP)} if the
homomorphism induced by multiplication by a general linear form, from
every degree to the next, has maximal rank.  In this paper we will
always assume that $k$ has characteristic zero.

A famous result in commutative algebra says that an artinian monomial
complete intersection over a field of characteristic zero has the WLP
(and even a stronger condition called the Strong Lefschetz
Property). This was proved in \cite{stanley}, \cite{watanabe}, and
\cite{RRR}. A consequence of this is that if the generator degrees are
specified, a general complete intersection with those generator
degrees has the WLP. It is an open question whether {\em every}
complete intersection has the WLP. Notice that the result above fails
to distinguish between a monomial complete intersection and a general
one (always with fixed generator degrees). We give a finer measure of
the Lefschetz property that does distinguish between these
(conjecturally in all cases,  and we give a proof in $\leq 4$
variables).

Suppose that such a standard graded algebra $A$ is given.  For any
pair of consecutive components $A_i$ and $A_{i+1}$, we can consider
the locus $\mathcal L_i$ of linear forms that fail to induce a
homomorphism of maximal rank on these components.  We will observe
that for each $i$ the variety $\mathcal L_i$ is a determinantal
variety, so depending on the absolute value of the {\em difference}
$\dim [A]_{i+1} - \dim [A]_i$, there is an expected codimension. If
the variety achieves this codimension, its degree (as a possibly
non-reduced scheme) is also known. One can then ask further questions
about $\mathcal L_i$, such as what are its irreducible components.  If
$\mathcal L_i$ fails to have the expected codimension, it is still
determinantal but its degree is less clear.

We define the {\em non-Lefschetz locus} $\mathcal L_I$ to be the union
of these loci $\mathcal L_i$, viewed as subvarieties of the
corresponding projective space $(\mathbb P^{n-1})^*$, over all
possible sets of consecutive components.  The algebra $A$ fails to
have the WLP if and only if $\mathcal L_I = \mathbb (P^{n-1})^*$. The
variety $\mathcal L_I$ is thus a union of determinantal varieties in
general.  If $A$ is Gorenstein (e.g. a complete intersection), there
is a natural sequence of inclusions of the $\mathcal L_i$, so
$\mathcal L_I$ is in fact itself a determinantal variety. (See
Proposition~\ref{inclusions}.)

In this paper we will study the non-Lefschetz locus for {\em specific}
algebras (monomial algebras) and we will consider it in the case of
the general element of an irreducible family (complete intersections
of prescribed generator degrees). Much more difficult is the question
of whether {\em every} element of an irreducible family (specifically
complete intersections) has the WLP, i.e. whether the non-Lefschetz
locus is always of positive codimension for such algebras.

In Section~\ref{sec:monomial} we completely characterize the
non-Lefschetz locus of monomial complete intersections
(Proposition~\ref{monomial ci}) and we also find all the possible
Jordan types of linear forms in such algebras
(Proposition~\ref{prop:Jordan}).

In Section~\ref{sec:general ci} we conjecture that the non-Lefschetz
locus of a general complete intersection has the expected codimenion
in the sense that will be made precise in Section~\ref{Sec:Prel}. We
prove this conjecture for complete intersections of codimension three
(Theorem~\ref{LI,n=3,d1d2d3}) and codimension four
(Theorem~\ref{LI,n=4,d1d2d3d4}).

In Section~\ref{sec:general Gorenstein} we study the non-Lefschetz
locus of a general artinian Gorenstein algebra of codimension three
with a given Hilbert function. In Theorem~\ref{gormain} we prove that
the non-Lefschetz locus has the expected codimension if the $g$-vector
associated to the Hilbert function is of decreasing type, while it is
of codimension one otherwise.

In Section~\ref{sec:codim2} we give a complete description of the
situation for algebras in codimension two.

\subsection*{Acknowledgements} 
  Part of this work was carried out during a visit to IHP in Paris and
  part of the work was carried out during a visit to CIRM in
  Trento. We are very grateful for these opportunities. 
  We are also grateful to Anthony Iarrobino  and Junzo Watanabe for useful
  discussions related to this work.  

  The first author was partially supported by the grant VR2013-4545,
  the second author by the National Security Agency under Grant
  H98230-12-1-0204 and by the Simons Foundation under Grant \#309556,
  the third author by the grant MTM2013-45075-P and the fourth author
  by the National Security Agency under Grant H98230-12-1-0247 and by
  the Simons Foundation under Grant \#317096.  

\section{Preliminaries}\label{Sec:Prel}

Let $R = k[x_1,x_2,\dots ,x_n]$ where $k$ is an algebraically closed
field of characteristic zero.  Let $M$ be a graded $R$-module of
finite length. We first briefly recall an idea, originally due to Joe
Harris, dealing with an isomorphism invariant of $M$. For further
details see \cite{Migliore(GeomInv)}.

The module structure of $M$ is determined by a collection of
homomorphisms $\phi_i : [R]_1 \rightarrow \hbox{Hom}_k(M_i, M_{i+1})$
as $i$ ranges from the initial degree of $M$ to the penultimate degree
where $M$ is not zero. Since $\phi_i$ is trivial if either $[M]_i$ or
$[M]_{i+1}$ is zero, we assume that this is not the case (we do not
assume that $M$ is generated in the first degree, so a zero component
could lie between non-zero ones). Let
$\ell = a_1 x_1 + \dots + a_n x_n$, and let us refer to the $a_i$ as
the {\em dual variables}. If we choose bases for $[M]_i$ and for
$[M]_{i+1}$, we can view $\phi_i$ as a
$(\dim [M]_{i+1}) \times (\dim [M]_i)$ matrix $B_i$ whose entries are
linear forms in the dual variables.  For any fixed $t$ we can thus
consider the ideal of $(t+1) \times (t+1)$ minors of $B_i$, and this
is an isomorphism invariant of $M$. However, for our purposes it is
enough to consider the ideal of maximal minors of $B_i$. Denoting by
$Y_i$ the scheme defined by the ideal of maximal minors of $B_i$, we
can view $Y_i$ as lying in the dual projective space
$(\mathbb P^{n-1})^* = \hbox{Proj}(k[a_1,\dots,a_n])$.  We have an
expected codimension for $Y_i$, and if that codimension is achieved
then we also have a formula for $\deg Y_i$:

\begin{lemma} \label{expected} Without loss of generality assume that
  $\dim [M]_i \leq \dim [M]_{i+1}$ (otherwise consider the transpose
  of $B_i$).  For sufficiently general entries of $B_i$, the
  codimension of $Y_i$ is $\dim [M]_{i+1} - \dim [M]_i +1$. If this
  codimension is achieved, then
  $\deg Y_i = \binom{\dim M_{i+1}}{\dim M_i -1}$.
\end{lemma}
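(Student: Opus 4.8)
The statement is a classical fact about generic determinantal varieties, so the natural strategy is to recognize $Y_i$ as (a linear section of) a standard generic determinantal locus and invoke the Eagon–Northcott complex together with the Giambelli–Thom–Porteous degree formula. Let me write $a = \dim[M]_i$ and $b = \dim[M]_{i+1}$, with $a \le b$ by hypothesis, so that $B_i$ is a $b \times a$ matrix of linear forms in $a_1,\dots,a_n$, and $Y_i \subset (\PP^{n-1})^*$ is cut out by the ideal $J$ of maximal ($a \times a$) minors of $B_i$.

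**Main steps.** First I would recall the generic situation: over the polynomial ring $S = k[z_{jk}]$ in $ab$ variables, the ideal $\mathfrak{m}_a$ of maximal minors of the generic $b\times a$ matrix $Z = (z_{jk})$ defines an irreducible Cohen–Macaulay variety of codimension exactly $b - a + 1$, and its degree is $\binom{b}{a-1}$; this is the Giambelli formula (see e.g.\ \cite{ACGH} or the treatment via the Eagon–Northcott complex in \cite{BrunsVetter}, \cite{EisenbudCA}). Second, the entries of $B_i$ being linear in $a_1,\dots,a_n$ gives a graded ring homomorphism $S \to k[a_1,\dots,a_n]$ sending $z_{jk}$ to the $(j,k)$ entry of $B_i$, hence $Y_i$ is the pullback of the generic determinantal variety under the induced linear map $(\PP^{n-1})^* \dashrightarrow \PP^{ab-1}$. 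For ``sufficiently general'' entries this linear map is, up to the linear span of its image, generic, so the codimension statement follows: a general linear section of a variety of codimension $b-a+1$ either has that same codimension or is empty, and genericity of the entries forces the codimension to be exactly $\min\{n, b-a+1\}$ — in particular the case $n \le b-a+1$ is where $Y_i = \emptyset$, which is the ``empty in a large family'' phenomenon advertised in the introduction. Third, when the expected codimension $b - a + 1$ is actually achieved (so $n > b-a+1$), the generic section remains Cohen–Macaulay and its degree is unchanged under a general linear section of the ambient projective space — a Bertini-type argument, or simply the fact that the Hilbert polynomial's leading coefficient is preserved by hyperplane sections that meet the variety properly. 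Hence $\deg Y_i = \binom{b}{a-1} = \binom{\dim M_{i+1}}{\dim M_i - 1}$, as claimed.

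**Where the work really is.** The codimension bound $\operatorname{codim} Y_i \ge b-a+1$ is automatic for \emph{any} matrix of linear forms — this is the classical Eagon–Northcott bound on heights of determinantal ideals — so the genuine content is the reverse inequality for general entries, i.e.\ that a generic choice of linear forms does cut the expected codimension (equivalently, that the induced linear subspace meets the generic determinantal variety in the expected dimension). I would handle this by exhibiting one explicit choice of linear forms in $n$ variables for which the intersection has the expected codimension (for instance, after a coordinate change, choosing the entries of $B_i$ among $\pm a_1, \dots, \pm a_n$ so as to contain a suitable ``scroll-like'' submatrix whose maximal minors visibly cut the right codimension), since the locus of matrices achieving at least the expected codimension is open, and openness plus nonemptiness gives the generic statement. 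The degree formula is then the least delicate part, following from the generic determinantal degree computation together with the fact that intersecting with a generic linear space of complementary dimension does not change the degree once the dimension is correct.

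**Summary of the argument.** In brief: (i) height of a determinantal ideal of maximal minors is always at most the expected value $b-a+1$, by Eagon–Northcott; (ii) equality holds for a special, hence generic, choice of linear entries — this is the crux; (iii) when equality holds, $Y_i$ is arithmetically Cohen–Macaulay, resolved by (a strand of) the Eagon–Northcott complex, and its degree equals the generic determinantal degree $\binom{b}{a-1}$, unchanged because a proper linear section preserves degree. The one subtlety to flag for the reader is that ``sufficiently general'' must be read as: if $n \le b-a+1$ then $Y_i$ is empty and there is nothing to compute, while if $n > b-a+1$ the expected codimension is attained and the degree formula applies.
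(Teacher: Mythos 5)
The paper states Lemma \ref{expected} without proof, treating it as the classical fact about generic determinantal loci (Harris's construction, with details referred to \cite{Migliore(GeomInv)}), and your argument is exactly the standard justification: view $Y_i$ as a general linear section of the generic determinantal variety of $b\times a$ matrices of rank $\le a-1$, which has codimension $b-a+1$ and degree $\binom{b}{a-1}$, and use that a general linear section preserves codimension (or is empty) and degree, with the Eagon--Northcott complex giving the degree whenever the expected codimension is attained. This is sound and complete at the level of detail the lemma requires. The one genuine error is in your ``where the work really is'' paragraph: you have the Eagon--Northcott inequality backwards. Eagon--Northcott is an \emph{upper} bound on height, so what is automatic for an arbitrary matrix of linear forms (with proper minor ideal) is $\codim Y_i \le b-a+1$; the inequality $\codim Y_i \ge b-a+1$ is precisely the part that can fail for special matrices --- the paper's monomial complete intersections give explicit examples where the non-Lefschetz locus has codimension $1$ although the expected codimension is larger --- and it is exactly the statement that needs genericity. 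Fortunately your operational argument (general linear sections in the main steps, or alternatively semicontinuity/openness of the locus where $\codim \ge b-a+1$ plus one explicit witness) proves the correct direction, so the proof survives; but the sentence attributing the lower bound to Eagon--Northcott should be corrected, and the explicit witness you allude to should either be produced or replaced by the Bertini-type statement you already invoke.
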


\begin{example}[\cite{Migliore(GeomInv)}]
  Harris's motivation was to apply this machinery to liaison
  theory. For instance, let $C \subset \mathbb P^3$ be the union of
  four general lines. Let
  \[
  M(C) = \bigoplus_{t \in \mathbb Z} H^1(\mathbb P^3, \mathcal
  I_C(t)),
  \]
  the Hartshorne-Rao module of $C$. We have
  \[
  \dim M(C)_t = \left \{
    \begin{array}{cl}
      3 & \hbox{if } t=0; \\
      4 & \hbox{if } t=1; \\
      2 & \hbox{if } t = 2; \\
      0 & \hbox{otherwise}.
    \end{array}
  \right.
  \]
  Taking $M = M(C)$, the expected codimension of $Y_0$ is $4-3+1 =2$,
  and the expected degree is $\binom{4}{2} = 6$. One can show that in
  fact $Y_0$ is the curve in $(\mathbb P^3)^*$ obtained as the duals
  of the four components of $C$ together with the duals of the two
  4-secant lines of $C$. It then follows from the fact that $Y_0$ is
  an isomorphism invariant, and some now-classical results of liaison
  theory (with a small argument), that $C$ is the only union of skew
  lines in its even liaison class.
\end{example}

Our idea now is to apply this machinery to the study of the Weak
Lefschetz property. Traditionally, we say that an artinian algebra
$A=R/I$ has the {\em Weak Lefschetz property (WLP)} if there is a
linear form $\ell \in [A]_1$ such that, for all integers $i$, the
multiplication map
\[
\times \ell: [A]_{i} \to [A]_{i+1}
\]
has maximal rank, i.e.\ it is injective or surjective. In this case,
the linear form $\ell $ is called a {\em Lefschetz element} of $A$.
(We will often abuse terminology and say that the corresponding ideal
has the WLP.)  The Lefschetz elements of $A$ form a Zariski open,
possibly empty, subset of $[A]_1$, which as above we will projectivize
and view in $(\mathbb P^{n-1})^*$.  This open set is nothing but
$(\PP^{n-1})^*\setminus L_I$.  This is our primary focus in this
paper, but we note that $A$ is said to have the {\em Strong Lefschetz
  property (SLP)} if the analogous statements are true for the
multiplication maps
\[
\times \ell^d : [A]_i \to [A]_{i+d}
\]
for all $i$ and $d$.

If we consider $A$ as an $R$-module, to say that $A$ satisfies the WLP
is equivalent to saying that none of the varieties $Y_i$ is all of
$(\mathbb P^{n-1})^*$. We first relabel the $Y_i$ with a more
descriptive notation for our application.

\begin{definition} \label{Lef loc} Given an artinian graded algebra
  $A = R/I$, we define
  \[
  \mathcal L_I:=\{[\ell] \in \mathbb P ([A]_1) \mid \ell \text{ is not
    a Lefschetz element} \} \subset (\PP^{n-1})^\ast
  \]
  and we call it the {\em non-Lefschetz locus of} $I$ (or of $A$).
  For any integer $i\ge 0$, we define
  \[
  \mathcal L_{I,i}:=\{\ell \in [A]_1 \mid \times \ell\colon [A]_i
  \longrightarrow [A]_{i+1} \text{ does not have maximal rank} \}
  \subset (\PP^{n-1})^\ast.
  \]
\end{definition}

In order to study the non-Lefschetz locus from a scheme-theoretic
perspective, we view $\mathcal L_{I,i}$ not as a set but rather as the
subscheme of $(\mathbb P^{n-1})^*$ defined by the maximal minors of a
suitable matrix, as explained above, taking $M = A$. The size of this
matrix is determined by the Hilbert function of $A$.  More precisely,
we introduce $S = k[a_1,a_2,\dots,a_n]$ as the homogeneous coordinate
ring of the dual projective space $(\mathbb P^{n-1})^\ast$, where we
think of the coordinates $a_1,a_2,\dots,a_n$ as the coefficients in
$\ell = a_1x_1+a_2x_2+\cdots+a_nx_n$. For each degree $i$, the
multiplication by $\ell$ on $S\otimes_k A$ gives the map
\[
\times \ell\colon S\otimes_k [A]_i \longrightarrow S\otimes_k
[A]_{i+1}
\]
of free $S$-modules which is represented by a matrix of linear forms
in $S$ given a choice of bases for $[A]_i$ and $[A]_{i+1}$. The locus
$\mathcal L_{I,i}\subseteq (\PP^{n-1})^\ast$ is scheme-theoretically
defined by the ideal of maximal minors of this matrix and we denote
this ideal by $I(\mathcal L_{I,i})$. Observe that this ideal is
independent of the choice of bases. In this way, we have
$\mathcal L_I = \bigcup_{i\ge 0} \mathcal L_{I,i}$, and
$\mathcal L_I\subseteq (\mathbb P^{n-1})^\ast$ is defined by the
homogeneous ideal
$I(\mathcal L_I) = \bigcap_{i\ge 0} I(\mathcal L_{I,i})$.

\begin{definition}
  If $\hbox{codim } \mathcal L_{I,i}$ takes the value prescribed by
  Lemma \ref{expected}, where now $\dim [M]_i$ is the value of the
  Hilbert function of $A$ in degree $i$, (and hence the degree of
  $\mathcal L_{I,i}$ is also determined by the Hilbert function), then
  we say that $\mathcal L_i$ has the {\em expected codimension} and the
  {\em expected degree}.
\end{definition}

Since in this article we are studying Gorenstein algebras, especially
complete intersections, it will be useful to know that the
non-Lefschetz locus is determined by the failure of injectivity of the
multiplication by linear forms in a single degree. It is clear on a
set-theoretical level that this is true (cf. \cite[Proposition
2.1]{MMN}).  We will now look at the question when there is an
inclusion of the ideals
$I(\mathcal L_{I,i+1})\subseteq I(\mathcal L_{I,i})$ which will ensure
that we only have to consider the middle degree even when we look at
the non-Lefschetz locus defined scheme-theoretically and not only
set-theoretically.

\begin{proposition}\label{inclusions}
  If $h_A(i)\le h_A(i+1) \le h_A(i+2)$ and $[\Soc{A}]_i=0$, then
  $I(\mathcal L_{I,i+1})\subseteq I(\mathcal L_{I,i})$.
\end{proposition}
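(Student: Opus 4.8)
The plan is to work with the matrices representing multiplication by a general linear form $\ell$ and to produce, for each maximal minor of the matrix in degree $i+1$, an expression for it (up to scalar) as an $S$-linear combination of maximal minors of the matrix in degree $i$. The natural tool is the commutative ladder
\[
\begin{array}{ccc}
S\otimes_k [A]_i & \xrightarrow{\ \times\ell\ } & S\otimes_k [A]_{i+1} \\
\downarrow{\scriptstyle \times\ell} & & \downarrow{\scriptstyle \times\ell} \\
S\otimes_k [A]_{i+1} & \xrightarrow{\ \times\ell\ } & S\otimes_k [A]_{i+2},
\end{array}
\]
together with the hypotheses: $[\Soc A]_i = 0$ says exactly that $\times\ell\colon [A]_i \to [A]_{i+1}$ is injective for general $\ell$ in the sense that, viewed over $S$, the corresponding matrix $B_i$ has generically maximal rank $h_A(i)$, i.e.\ its maximal minors are not all zero; meanwhile $h_A(i)\le h_A(i+1)\le h_A(i+2)$ tells us which of the two matrices is "tall" and guarantees that at the generic point the expected behaviour (injectivity) is the relevant one in all three spots. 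I would first observe that on the open set $\mathcal L_{I,i}^c$ the map $\times\ell\colon S\otimes[A]_i \to S\otimes[A]_{i+1}$ is a split injection of sheaves (over the function field, or after inverting one maximal minor of $B_i$), so its cokernel is locally free there.

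The key step is then purely linear-algebraic/homological. Over the localization $S_{\Delta}$ where $\Delta$ is a fixed nonzero maximal minor of $B_i$, the injection $S\otimes[A]_i \hookrightarrow S\otimes[A]_{i+1}$ splits, and I would identify $I(\mathcal L_{I,i+1})$, localized at $\Delta$, with the ideal of maximal minors of the induced map on the cokernel, which has source of rank $h_A(i+1)-h_A(i)$ and target of rank $h_A(i+1)$ — reflecting the fact that failure of injectivity of $\times\ell$ from degree $i+1$ is detected on the complementary summand. Since $\Delta$ was an arbitrary maximal minor of $B_i$ and these generate $I(\mathcal L_{I,i})$, patching these local statements (Cramer's-rule style identities to clear denominators) yields genuine containments of ideals in $S$: each maximal minor of $B_{i+1}$, multiplied by a suitable power of $\Delta$, lies in $I(\mathcal L_{I,i})$, and taking all $\Delta$ together and using that $I(\mathcal L_{I,i})$ is (in the cases we care about) unmixed/saturated or at least that these colon relations force containment, we get $I(\mathcal L_{I,i+1})\subseteq I(\mathcal L_{I,i})$.

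The cleanest way to package this is via the Buchsbaum--Rim complex or simply the Eagon--Northcott complex: the hypothesis $[\Soc A]_i=0$ makes $B_i$ generically injective, so $\mathrm{coker}(\times\ell\colon S\otimes[A]_i\to S\otimes[A]_{i+1})$ restricted to $\mathcal L_{I,i}^c$ is a vector bundle $\mathcal C$ of rank $h_A(i+1)-h_A(i)$, and the induced map $\times\ell\colon \mathcal C \to S\otimes[A]_{i+1}|_{\mathcal L_{I,i}^c}$ has the property that $\mathcal L_{I,i+1}\cap \mathcal L_{I,i}^c$ is its degeneracy locus. I would make the divisor-of-denominators bookkeeping explicit by choosing the splitting given by Cramer's rule with respect to $\Delta$, so that every entry of the matrix of $\times\ell\colon \mathcal C\to S\otimes[A]_{i+1}$ is, after multiplying by $\Delta$, an element of $S$ expressible through the entries of $B_i$ and $B_{i+1}$; then a maximal minor of this matrix, times an appropriate power of $\Delta$, becomes a polynomial identity exhibiting that minor as lying in $I(\mathcal L_{I,i})$.

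The main obstacle I anticipate is exactly this denominator-clearing step: the induced map on cokernels is naturally defined only after inverting $\Delta$, and one must check that the resulting containment $\Delta^N\cdot I(\mathcal L_{I,i+1}) \subseteq I(\mathcal L_{I,i})$, taken over all maximal minors $\Delta$ of $B_i$ generating $I(\mathcal L_{I,i})$, actually upgrades to $I(\mathcal L_{I,i+1})\subseteq I(\mathcal L_{I,i})$ rather than merely to an inclusion of saturations or of radicals. Resolving this requires either a direct Cramer-type computation showing the $\Delta^N$ can be removed, or an argument that $I(\mathcal L_{I,i})$ is its own saturation with respect to $(\Delta : \Delta \text{ a maximal minor of } B_i)$ — equivalently that no associated prime of $S/I(\mathcal L_{I,i})$ contains all maximal minors of $B_i$ — which should follow from the determinantal (Cohen--Macaulay, unmixed of the expected codimension) structure noted just before Lemma~\ref{expected}. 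I would therefore prove the explicit minor-by-minor identity rather than rely on the scheme-theoretic softening.
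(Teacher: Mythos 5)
There is a genuine gap, and it is precisely at the point you flag as ``the main obstacle'': the localization strategy cannot produce the ideal containment, because once you invert a maximal minor $\Delta$ of $B_i$ the target ideal becomes the unit ideal, so every statement proved over $S_\Delta$ is vacuous for the containment $I(\mathcal L_{I,i+1})\subseteq I(\mathcal L_{I,i})$. Your two proposed repairs do not work: the relation $\Delta^N\, I(\mathcal L_{I,i+1})\subseteq I(\mathcal L_{I,i})$ is trivially true for every $N\ge 1$ simply because $\Delta\in I(\mathcal L_{I,i})$, hence carries no information; and since the maximal minors of $B_i$ \emph{generate} $I(\mathcal L_{I,i})$, every associated prime of $S/I(\mathcal L_{I,i})$ contains all of them, so the saturation/associated-prime criterion you invoke can never hold (unless the ideal is the unit ideal). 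Moreover the identification of $\mathcal L_{I,i+1}$ (on the chart $\Delta\ne 0$) with the degeneracy locus of the induced map on the cokernel is not correct even set-theoretically: a kernel vector of $\times\ell$ on $[A]_{i+1}$ may lie in the image of $\times\ell\colon[A]_i\to[A]_{i+1}$ (this is failure of injectivity of $\times\ell^2$ on $[A]_i$), which is invisible on the complementary summand. What is missing is a mechanism that produces an honest polynomial identity, and your ladder with all four maps equal to $\times\ell$ does not give one. The paper gets it by first reducing to the case $h_A(i+1)=h_A(i+2)$ (each maximal minor of $B_{i+1}$ is the determinant of the corresponding square matrix for a quotient $B=A/J$ with $[B]_i=[A]_i$, $[B]_{i+1}=[A]_{i+1}$), and then using a \emph{fixed} coordinate $x_n$ with $\times x_n$ injective in both spots: the square with vertical maps $\times x_n$ and horizontal maps $\times\ell$ lets one choose bases so that the matrix of $\times\ell$ on $[A]_i$ is literally a submatrix (same rows) of the square matrix of $\times\ell$ on $[A]_{i+1}$, and Laplace expansion along the extra columns writes its determinant as an $S$-combination of maximal minors of that submatrix.

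A second, independent problem is your reading of the socle hypothesis. $[\Soc A]_i=0$ does \emph{not} say that $B_i$ has generically maximal rank: it is entirely possible (failure of the WLP by non-injectivity) that every linear form fails to be injective from degree $i$ to $i+1$, i.e.\ $I(\mathcal L_{I,i})=0$, in which case there is no nonzero minor $\Delta$ and your construction has no starting point, while the proposition still demands $I(\mathcal L_{I,i+1})=0$. This is exactly where the hypothesis is used in the paper: if $0\ne a\in[A]_i$ satisfies $\ell a=0$ and $a$ is not socle, then $ma\ne 0$ for some linear form $m$ and $\ell(ma)=0$, so $\mathcal L_{I,i}=(\PP^{n-1})^\ast$ forces $\mathcal L_{I,i+1}=(\PP^{n-1})^\ast$ and both ideals vanish. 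Your argument never treats this case, and without the socle hypothesis the statement would indeed fail, so any correct proof must use it at this point rather than as a genericity statement about $B_i$.
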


\begin{proof}
  The ideal $I(\mathcal L_{I,i+1})$ is generated by the maximal minors
  of the matrix representing the map
  $\times \ell \colon S\otimes_k [A]_{i+1} \longrightarrow S\otimes_k
  [A]_{i+2}$,
  where $\ell = a_1x_1+a_2x_2+\cdots+a_nx_n$. Each such minor equals
  the determinant of the matrix representing the map
  \[
  \times \ell \colon S\otimes_k [B]_{i+1} \longrightarrow S\otimes_k
  [B]_{i+2}
  \]
  where $B=A/J$ and $J$ is an ideal generated by $h_A(i+2)-h_A(i+1)$
  forms of degree $i+2$. Since $[A]_i = [B]_i$ and
  $[A]_{i+1} = [B]_{i+1}$, we can prove the inclusion
  $I(\mathcal L_{I,i+1})\subseteq I(\mathcal L_{I,i})$ for $A$ by
  proving the inclusion for all such quotients $B = A/J$. Therefore,
  we will now assume that $h_A(i+1)=h_A(i+2)$.

  Suppose that $\mathcal L_{I,i+1} = (\mathbb P^{n-1})^*$.  Then
  $I(\mathcal L_{I,i+1})=\langle 0\rangle$ and the inclusion of ideals
  is trivial. If $\mathcal L_{I,i}= (\mathbb P^{n-1})^*$ we will also
  have that $\mathcal L_{I,i+1} = (\mathbb P^{n-1})^*$ since $A$ by
  assumption does not have socle in degree $i$ and the inclusion of
  ideals is again trivial. Thus we only have to consider the case when
  $\mathcal L_{I,i} \ne (\mathbb P^{n-1})^*$ and
  $\mathcal L_{I,i+1} \ne (\mathbb P^{n-1})^*$. In this case, we can
  change coordinates so that
  $\times x_n \colon [A]_i\longrightarrow [A]_{i+1}$ and
  $\times x_n \colon [A]_{i+1}\longrightarrow [A]_{i+2}$ both have
  maximal rank. Consider the diagram
  \begin{center}
    \begin{tikzpicture}
      \matrix (m) [matrix of math nodes,row sep=3em,column
      sep=4em,minimum width=2em]
      {S\otimes_k{[A]}_{i} & S\otimes_k{[A]}_{i+1} \\
        S\otimes_k{[A]}_{i+1} & S\otimes_k {[A]}_{i+2} \\};
      \path[-stealth] (m-1-1) edge node [left] {$\times x_n$} (m-2-1)
      edge node [above] {$\times \ell$} (m-1-2) (m-2-1.east|-m-2-2)
      edge node [above] {$\times \ell$} (m-2-2) (m-1-2) edge [double]
      node [right] {$\times x_n$} (m-2-2);
    \end{tikzpicture}
  \end{center}
  The injectivity of the two vertical maps shows that we can choose
  monomial cobases for $[A]_i$, $[A]_{i+1}$ and $[A]_{i+2}$ in such a
  way that the matrix representing the map
  $\times \ell \colon S\otimes_k [A]_i\longrightarrow
  S\otimes_k[A]_{i+1}$
  is a submatrix of the matrix representing the map
  $\times\ell \colon S\otimes_k[A]_{i+1}\longrightarrow
  S\otimes_k[A]_{i+2}$.
  The ideal $I(\mathcal L_{I,i+1})$ is principal, generated by the
  determinant of the matrix representing the map
  $\times\ell \colon S\otimes_k[A]_{i+1}\longrightarrow
  S\otimes_k[A]_{i+2}$.
  Since the two matrices have the same number of rows, the Laplace
  expansion of the determinant of the larger matrix shows that this
  determinant is in the ideal generated by the maximal minors of the
  submatrix, which proves the inclusion
  $I(\mathcal L_{I,i+1})\subseteq I(\mathcal L_{I,i})$.
\end{proof}

\begin{corollary}\label{Scheme-theoretic}
  If $A=R/I$ is Gorenstein of socle degree $e$ then
  $\mathcal L_I = \mathcal L_{I,i}$ scheme-theoretically, where
  $i = \lfloor \frac{e-1}2\rfloor$.
\end{corollary}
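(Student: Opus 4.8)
The goal is the equality of ideals $I(\mathcal L_I)=I(\mathcal L_{I,i})$ with $i=\lfloor\frac{e-1}{2}\rfloor$. Since $I(\mathcal L_I)=\bigcap_{j\ge 0}I(\mathcal L_{I,j})$ is contained in each term, the inclusion $I(\mathcal L_I)\subseteq I(\mathcal L_{I,i})$ is automatic, and it suffices to prove $I(\mathcal L_{I,i})\subseteq I(\mathcal L_{I,j})$ for every $j$. The first step I would take is to invoke Gorenstein duality: the perfect pairings $[A]_a\times[A]_{e-a}\to[A]_e\cong k$ make multiplication by $\ell$ self-adjoint, so the matrix representing $\times\ell\colon S\otimes_k[A]_j\to S\otimes_k[A]_{j+1}$ is the transpose of the one representing $\times\ell\colon S\otimes_k[A]_{e-1-j}\to S\otimes_k[A]_{e-j}$. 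As a matrix and its transpose have the same ideal of maximal minors, $I(\mathcal L_{I,j})=I(\mathcal L_{I,e-1-j})$; since for $0\le j\le e-1$ the smaller element of each pair $\{j,e-1-j\}$ is $\le i$, it is enough to prove $I(\mathcal L_{I,i})\subseteq I(\mathcal L_{I,j})$ for $0\le j\le i$.

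Next I would split into two cases according to whether $A$ has the WLP. Recall the set-theoretic ``going up and down'' argument (using that $A/(\ell)$ is standard graded, that $\Soc A$ is concentrated in degree $e$, and the duality above; cf.\ \cite[Proposition~2.1]{MMN}): a linear form $\ell$ is a Lefschetz element of $A$ exactly when $\times\ell\colon[A]_i\to[A]_{i+1}$ has maximal rank. If $A$ fails the WLP, then no $\ell$ makes this map have maximal rank, so every maximal minor of the matrix cutting out $\mathcal L_{I,i}$ vanishes at every point of $(\PP^{n-1})^\ast$ and is therefore the zero polynomial; hence $I(\mathcal L_{I,i})=(0)$, the desired inclusions are trivial, and $I(\mathcal L_I)=(0)=I(\mathcal L_{I,i})$.

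It remains to treat the case when $A$ has the WLP, and here Proposition~\ref{inclusions} is the tool. Fix a Lefschetz element $\ell$. It has maximal rank in every degree, and since $\Soc A$ sits only in degree $e$ one checks, by propagating injectivity downwards (and using the duality to handle degree $i$ when $e$ is even, where maximal rank in degree $i$ translates to injectivity in degree $i+1$), that $\times\ell$ is injective in all degrees $\le i$; in particular $h_A(0)\le h_A(1)\le\cdots\le h_A(i+1)$. Now I would apply Proposition~\ref{inclusions} with the role of its ``$i$'' played successively by $j=0,1,\dots,i-1$: for each such $j$ the hypotheses $h_A(j)\le h_A(j+1)\le h_A(j+2)$ and $[\Soc A]_j=0$ hold, so $I(\mathcal L_{I,j+1})\subseteq I(\mathcal L_{I,j})$. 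Chaining these yields $I(\mathcal L_{I,i})\subseteq I(\mathcal L_{I,i-1})\subseteq\cdots\subseteq I(\mathcal L_{I,0})$, which together with the duality reduction of the first paragraph completes the proof.

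The step I expect to be the crux is realizing that one cannot simply iterate Proposition~\ref{inclusions} down to degree $0$: Gorenstein Hilbert functions need not be unimodal, so that chain of inclusions is not available in general. The observation that rescues the argument is that any failure of first-half unimodality already forces $A$ to fail the WLP, which collapses $I(\mathcal L_{I,i})$ to the zero ideal and makes the statement vacuous; Proposition~\ref{inclusions} is then needed, and legitimately applies, only in the WLP case, where the first half of the Hilbert function is automatically non-decreasing.
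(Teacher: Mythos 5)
Your proposal is correct and follows essentially the same route as the paper's own (very brief) proof: split on whether $A$ has the WLP, note that failure of the WLP makes $I(\mathcal L_{I,i})=(0)$ so everything is trivial, and otherwise use Gorenstein duality (transposed matrices have the same maximal minors) together with the unimodality of the Hilbert function forced by the WLP to chain Proposition~\ref{inclusions} down from the middle degree. You have merely written out in detail the steps the paper compresses into ``the Hilbert function is unimodal and by Proposition~\ref{inclusions} and the duality of the Gorenstein algebra we get the equality.''
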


\begin{proof}
  If $A$ does not have the WLP, we have
  $\mathcal L_I = \mathcal L_{I,i}=(\PP^{n-1})^\ast$. If $A$ has the
  WLP the Hilbert function is unimodal and by
  Proposition~\ref{inclusions} and the duality of the Gorenstein
  algebra we get the equality.
\end{proof}

\begin{remark}
  If $A=R/I$ has socle in degree $i$, we need not have the inclusion
  $I(\mathcal L_{I,i+1})\subseteq I(\mathcal L_{I,i})$ since we then
  have that $I(\mathcal L_{I,i}) = \langle 0\rangle$ while
  $I(\mathcal L_{I,i+1})$ might be non-trivial.

  If $A$ is not Gorenstein, but \emph{level}, we can get a similar
  result as Corollary~\ref{Scheme-theoretic} but in some cases we will
  have to use two degrees instead of one since we cannot apply
  duality.  (cf. \cite[Proposition 2.1]{MMN} for the set-theoretic
  statement.)
\end{remark}

\section{The non-Lefschetz locus of a monomial complete
  intersection}\label{sec:monomial}

In this section and the next we will restrict ourselves to the case of
complete intersections.  In this section we study monomial complete
intersections.

Notice that to say that an artinian ideal $I\subset R$ has the WLP is
equivalent to saying that $\codim \mathcal L_I \ge 1$. The aim of this
section is to study $\codim \mathcal L_I $ when
$I = \langle F_1, \cdots , F_n \rangle \subset R$ is a monomial
complete intersection.  We know that for a monomial complete
intersection, hence for a general choice of $F_1,\cdots ,F_n$, $R/I$
has the WLP, thanks to the main result of \cite{stanley},
\cite{watanabe} and \cite{RRR}; and the same holds for {\em any}
choice of $F_i$ if $n\le 3$ (cf. \cite{HMNW}).  Nevertheless, we will
see in this section and the next that the non-Lefschetz locus behaves
very differently for monomial complete intersections than it does for
general complete intersections.

\begin{proposition} \label{monomial ci} Let
  $I=\langle x_1^{d_1}, \cdots , x_n^{d_n} \rangle \subset R :=
  k[x_1,\cdots, x_n]$
  be an artinian monomial complete intersection, with socle degree
  $e = d_1 + \dots + d_n - n$.  Assume without loss of generality that
  $d_n \geq \dots \geq d_1 \geq 2$. Then the following
  characterization of the Lefschetz elements holds.
  \begin{enumerate}
  \item\label{MonCI1} If $d_n > \lfloor \frac{e+1}{2} \rfloor$ then
    $\ell = a_1x_1 + a_2 x_2 + \cdots + a_{n} x_{n}$ is a Lefschetz
    element if and only if $a_{n} \neq 0$.
  \item\label{MonCI2} If $e$ is even and
    $d_n\le\lfloor \frac{e+1}{2} \rfloor$ then
    $\ell = a_1x_1 + a_2 x_2 + \cdots + a_{n} x_{n}$ is a Lefschetz
    element if and only if $a_i=0$ for at most one index $i$ and
    $a_j\ne 0$ for all indices $j$ with $d_j > 2$.
  \item\label{MonCI3} If $e$ is odd and
    $d_n\le\lfloor \frac{e+1}{2} \rfloor$ then
    $\ell = a_1x_1+a_2 x_2 + \dots + a_{n} x_{n}$ is a Lefschetz
    element if and only if $a_1a_2\cdots a_n\neq 0$.
  \end{enumerate}
\end{proposition}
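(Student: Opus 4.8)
The plan is to reduce everything to a single multiplication map in the middle degree, exploiting Corollary~\ref{Scheme-theoretic}, and then to analyze that map by the tensor-decomposition structure available for monomial complete intersections. First I would invoke the result of \cite{stanley}, \cite{watanabe}, \cite{RRR}: the monomial complete intersection $A = R/I$ has the SLP, and in particular $\ell = x_1 + \cdots + x_n$ is a Lefschetz element. More generally, for any linear form $\ell = a_1 x_1 + \cdots + a_n x_n$ with all $a_i \neq 0$, a change of variables $x_i \mapsto a_i x_i$ fixes $I$ and carries $\ell$ to $x_1 + \cdots + x_n$, so such $\ell$ is always Lefschetz. This already gives the "if" direction of (\ref{MonCI3}) and part of the others. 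The substance of the proposition is therefore: (a) to show that when $\ell$ has more than one vanishing coefficient (or a vanishing coefficient on a "genuine" variable, i.e. one with $d_i > 2$) in the relevant ranges, $\ell$ fails to be Lefschetz; and (b) in case (\ref{MonCI1}), to show that the single constraint $a_n \neq 0$ suffices, and in case (\ref{MonCI2}) that a single vanishing coefficient among the $d_j = 2$ variables is harmless.

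For the failure directions, the key observation is that if $a_j = 0$ then $\ell$ lives in the subring $R' = k[x_1,\dots,\widehat{x_j},\dots,x_n]$, and multiplication by $\ell$ on $A$ decomposes according to the tensor decomposition $A \cong k[x_j]/(x_j^{d_j}) \otimes_k A'$, where $A' = R'/(x_1^{d_1},\dots,\widehat{x_j^{d_j}},\dots,x_n^{d_n})$. Concretely, $[A]_m = \bigoplus_{t=0}^{d_j-1} x_j^t \otimes [A']_{m-t}$, and $\times \ell$ acts as $\mathrm{id} \otimes (\times\ell)$ on each summand. Thus the rank of $\times\ell\colon [A]_m \to [A]_{m+1}$ is $\sum_{t=0}^{d_j-1} \mathrm{rank}(\times\ell\colon [A']_{m-t}\to [A']_{m-t+1})$. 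I would compare this sum against $\min(h_A(m), h_A(m+1))$ in the critical degree $m = \lfloor (e-1)/2\rfloor$. When $\ell$ has a second vanishing coefficient we iterate the decomposition; the point is that the "wasted rank" accumulates. A clean way to package this: $\times\ell$ on $A$ has maximal rank in every degree if and only if $\times\ell$ on $A'$ has the SLP-type behavior needed to fill up the graded pieces after tensoring with a truncated polynomial ring — and tensoring with $k[x_j]/(x_j^2)$ (the $d_j = 2$ case) is exactly the "doubling" that the SLP of $A'$ can absorb in one direction but not, in the odd socle degree case, symmetrically. This is where the parity of $e$ and the threshold $d_n \le \lfloor (e+1)/2\rfloor$ enter: the threshold is precisely the condition that the largest "block" $x_n^{d_n-1}$ does not by itself span past the middle degree, so that no single variable can be dropped without losing rank.

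For case (\ref{MonCI1}), where $d_n > \lfloor (e+1)/2\rfloor$: here I would argue that the variable $x_n$ is "dominant" — its powers alone, via $A \cong k[x_n]/(x_n^{d_n}) \otimes A'$, already stretch across the middle degree, and $e' := e - (d_n-1) = d_1 + \cdots + d_{n-1} - (n-1) < d_n$, so $A'$ sits entirely in degrees below $d_n$. One shows that $\ell$ (with $a_n \neq 0$, other $a_i$ arbitrary) is Lefschetz by checking injectivity/surjectivity in the one relevant degree directly from the tensor decomposition: the factor $k[x_n]/(x_n^{d_n})$ contributes full-rank "staircase" shifts, and because $A'$ is small the maps $\times\ell$ on $A'$ only need to be injective (which holds since $\ell$ restricted appropriately is nonzero and $A'$ has the SLP), never surjective, in the relevant range. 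Conversely if $a_n = 0$ then $\ell \in R'$ and $\times\ell$ on $A \cong k[x_n]/(x_n^{d_n})\otimes A'$ is $\mathrm{id}\otimes(\times\ell)$, which in the middle degree of $A$ is a direct sum of copies of maps on $[A']$; since $\mathrm{reg}$-type bounds force some of these to be neither injective nor surjective, maximal rank fails. The main obstacle I anticipate is the careful bookkeeping in case (\ref{MonCI2}): showing that exactly one vanishing coefficient among the $d_j = 2$ variables is tolerated but a vanishing coefficient on a $d_j > 2$ variable (or two vanishing coefficients at all) is fatal. This requires an exact rank count, not just an inequality, in the middle degree — essentially proving that tensoring the SLP algebra $A'$ (which has even socle degree after removing a square) with $k[x_j]/(x_j^2)$ preserves the WLP for $\ell$ precisely once, and tracking how the socle-degree parity of $A'$ changes as variables are removed. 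I would handle this by induction on $n$, with the SLP of monomial complete intersections as the base mechanism, reducing at each step via the tensor decomposition and a lemma on ranks of $\mathrm{id}\otimes(\times\ell)$ on $k[x]/(x^d)\otimes M$ in terms of the Jordan type of $\times\ell$ on $M$.
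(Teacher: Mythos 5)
Your setup coincides with the paper's opening moves: by the torus action only the support of $\ell$ matters, it suffices to check injectivity in the middle degree, and $A$ decomposes as $A'\otimes A''$ where $A''$ is generated by the variables with vanishing coefficients, on which $\ell$ acts trivially, so the rank of $\times\ell$ is the sum of the ranks of component maps on $A'$. But the decisive step is missing. The paper converts the whole problem into a single numerical condition: since (after rescaling) $\ell$ is a \emph{strong} Lefschetz element of $A'$, the component map $[A']_j\to[A']_{j+1}$ is injective exactly when $h_{A'}(j)\le h_{A'}(j+1)$, so $\ell$ is Lefschetz on $A$ if and only if $h_{A'}(\lfloor\frac{e-1}2\rfloor)\le h_{A'}(\lfloor\frac{e+1}2\rfloor)$. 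As soon as some coefficient vanishes, these degrees lie above the middle of $A'$, so symmetry and unimodality force a flat top of $h_{A'}$ on the interval $[\,e'-\lfloor\frac{e+1}2\rfloor,\ \lfloor\frac{e+1}2\rfloor\,]$, with $e'$ the socle degree of $A'$; and the input that closes the argument is \cite[Theorem 1]{RRR}: a monomial complete intersection can have a flat of length at least three only if one generator degree exceeds the end of the flat. This is exactly what produces the trichotomy: a long flat forces $d_n>\lfloor\frac{e+1}2\rfloor$ and $a_n\ne 0$ (case (\ref{MonCI1})); a flat of length two forces $e$ even and $A''=k[x_i]/\langle x_i^2\rangle$ (case (\ref{MonCI2})); otherwise no coefficient may vanish (case (\ref{MonCI3})). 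Your plan stops exactly where this tool is needed: the ``exact rank count'' for case (\ref{MonCI2}), which you yourself flag as the main obstacle and defer to an unspecified induction on $n$ with a Jordan-type lemma, is precisely this flat-top analysis, and nothing in the proposal supplies it.

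There is also a concrete flaw in your treatment of case (\ref{MonCI1}). For the ``if'' direction you peel off $x_n$ alone, but once $a_n\ne 0$ the multiplication on $k[x_n]/(x_n^{d_n})\otimes A'$ is not of the form $\mathrm{id}\otimes(\times\ell)$, so the rank additivity you use elsewhere does not apply; and your justification that the relevant maps on $A'$ are injective ``since $\ell$ restricted appropriately is nonzero and $A'$ has the SLP'' is circular: the SLP only guarantees maximal rank for linear forms with all coefficients nonzero (up to the torus action), and whether a form with vanishing coefficients is Lefschetz on $A'$ is the very question under discussion. A correct elementary argument does exist here --- in the two relevant degrees, both at most $d_n-1$, no truncation by $x_n^{d_n}$ occurs, and comparing top powers of $x_n$ in $\ell F$ gives injectivity whenever $a_n\ne 0$ --- but it is not the one you wrote, and the ``$\mathrm{reg}$-type bounds'' invoked for the converse are likewise not an argument. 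By contrast, the paper needs no induction and no separate mechanism for case (\ref{MonCI1}): the single decomposition by the support of $\ell$, the Hilbert-function criterion coming from the strong Lefschetz property of $A'$, and \cite[Theorem 1]{RRR} handle all three cases at once.
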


\begin{proof}
  We start by fixing the linear form
  $\ell = a_1x_1+a_2x_2+\cdots+a_nx_n$. Let $A= R/I$, let
  $S = \{i\colon a_i\ne 0\}\subseteq \{1,2,\dots,n\}$ and define the
  subrings $A'$ and $A''$ of $A$ as the subrings generated by
  $\{x_i\}_{i\in S}$ and by $\{x_i\}_{i\notin S}$, respectively. Both
  $A'$ and $A''$ are monomial complete intersections and $\ell$ acts
  trivially on $A''$ while it is a Lefschetz element on $A'$.

  In order to determine whether or not $\ell$ is a Lefschetz element
  on $A$, it is sufficient to consider the injectivity of the
  multiplication map in the middle degree, i.e.,
  \begin{equation}
    \times \ell \colon [A]_{\lfloor \frac{e-1}2\rfloor}\longrightarrow
    [A]_{\lfloor \frac{e+1}2\rfloor}.\label{middle_map}
  \end{equation}
  For any integer $j$, we have that
  \[[A]_j =\left( [A']_j\otimes [A'']_0\right)\oplus \left([A']_{j-1}\otimes [A'']_1\right)\oplus
  \cdots \left(\oplus [A']_0\otimes [A'']_j\right)\]
  and since $\ell$ acts trivially on $A''$, the injectivity of
  (\ref{middle_map}) is equivalent to injectivity in each component
  \[
  \times \ell \colon [A']_{\lfloor \frac{e-1}2\rfloor-i}\otimes
  [A'']_i\longrightarrow [A']_{\lfloor \frac{e+1}2\rfloor-i}\otimes
  [A'']_i,\qquad \text{for all $i\ge 0$.}
  \]
  Now, injectivity in the top degree
  $\times \ell \colon [A']_{\lfloor \frac{e-1}2\rfloor}\longrightarrow
  [A']_{\lfloor \frac{e+1}2\rfloor}$
  implies injectivity in the lower degrees of $A'$. Since $\ell$ is a
  Lefschetz element on $A'$, we have injectivity of the latter map if
  and only if
  \[
  \dim_k [A']_{\lfloor \frac{e-1}2\rfloor} \le \dim_k [A']_{\lfloor
    \frac{e+1}2\rfloor}.
  \]
  Since $\lfloor \frac{e+1}2\rfloor$ is above the middle degree if
  $S\ne \{1,2,\dots,n\}$, we must have a flat top in the Hilbert
  function of $A'$ between degree $e'-\lfloor \frac{e+1}2\rfloor$ and
  degree $\lfloor \frac{e+1}2\rfloor$ in this situation, where $e'$ is
  the socle degree of $A'$. If this forced flat top has length two, we
  must have
  $\lfloor \frac{e+1}2\rfloor-1 =e'-\lfloor \frac{e+1}2\rfloor$ which
  is only possible if $e$ is even and $e'=e-1$. In this case $A''$ is
  generated by one variable $x_i$ with $d_i=2$ and $\ell$ is a
  Lefschetz element on $A$.

  If there is a flat of length at least three, it follows from
  \cite[Theorem 1]{RRR} that one of the generators of the defining
  ideal of $A'$ must have a degree which is above the end of the
  flat. There can be at most one $d_i$ which is greater than
  $\lfloor \frac{e+1}2\rfloor$, so in this case we must have
  $d_n>\lfloor \frac{e+1}2\rfloor$. In this case, $\ell$ is a
  Lefschetz element of $A$.

  We now relate what we have shown with the statements of our
  proposition.

  In the case (\ref{MonCI1}), we get that $\ell$ is a Lefschetz
  element if and only if $d_n$ is the degree of one of the generators
  of the defining ideal of $A'$, which is equivalent to $a_n\neq 0$.

  If $d_n\le \lfloor \frac{e+1}2\rfloor$, the only case when $\ell$ is
  a Lefschetz element and $A'\ne A$ is when $e$ is even and
  $A'' = k[x_i]/\langle x_i^2\rangle$. This shows (\ref{MonCI2}) and
  (\ref{MonCI3}).
\end{proof}

\begin{remark}
  Case~\ref{MonCI2} of Proposition~\ref{monomial ci} shows that the
  non-Lefschetz locus does not need to be unmixed. The smallest
  example is for $d_1=d_2=2$ and $d_3=d_4=3$ where we get
  \[
  \begin{array}{rl}
    I(\mathcal L_I)   & =\langle
                        {a}_{2}^{\hphantom{1}}{a}_{3}^{2}{a}_{4}^{5},
                        {a}_{1}^{\hphantom{1}}{a}_{3}^{2}{a}_{4}^{5},
                        {a}_{1}^{\hphantom{1}}{a}_{2}^{\hphantom{1}}{a}_{3}^{\hphantom{1}}{a}_{4}^{5}, 
                        {a}_{2}^{\hphantom{1}}{a}_{3}^{3}{a}_{4}^{4},
                        {a}_{1}^{\hphantom{1}}{a}_{3}^{3}{a}_{4}^{4}, 
                        {a}_{2}^{2}{a}_{3}^{2}{a}_{4}^{4},
                        {a}_{1}^{\hphantom{1}}{a}_{2}{a}_{3}^{2}{a}_{4}^{4},
                        {a}_{1}^{2}{a}_{3}^{2}{a}_{4}^{4}, 
                        {a}_{1}^{\hphantom{1}}{a}_{2}^{2}{a}_{3}^{\hphantom{1}}{a}_{4}^{4}, 
    \\&
        {a}_{1}^{2}{a}_{2}^{\hphantom{1}}{a}_{3}{a}_{4}^{4},
        {a}_{2}^{\hphantom{1}}{a}_{3}^{4}{a}_{4}^{3}, 
        {a}_{1}^{\hphantom{1}}{a}_{3}^{4}{a}_{4}^{3},
        {a}_{2}^{2}{a}_{3}^{3}{a}_{4}^{3}, 
        {a}_{1}^{2}{a}_{3}^{3}{a}_{4}^{3}, 
        {a}_{1}^{\hphantom{1}}{a}_{2}^{2}{a}_{3}^{2}{a}_{4}^{3},
        {a}_{1}^{2}{a}_{2}{a}_{3}^{2}{a}_{4}^{3},
        {a}_{2}^{\hphantom{1}}{a}_{3}^{5}{a}_{4}^{2}, 
        {a}_{1}^{\hphantom{1}}{a}_{3}^{5}{a}_{4}^{2},	\\&
                                                            {a}_{2}^{2}{a}_{3}^{4}{a}_{4}^{2},
                                                            {a}_{1}^{\hphantom{1}}{a}_{2}^{\hphantom{1}}{a}_{3}^{4}{a}_{4}^{2}, 
                                                            {a}_{1}^{2}{a}_{3}^{4}{a}_{4}^{2}, 
                                                            {a}_{1}^{\hphantom{1}}{a}_{2}^{2}{a}_{3}^{3}{a}_{4}^{2},
                                                            {a}_{1}^{2}{a}_{2}{a}_{3}^{3}{a}_{4}^{2},
                                                            {a}_{1}^{\hphantom{1}}{a}_{2}^{\hphantom{1}}{a}_{3}^{5}{a}_{4}, 
                                                            {a}_{1}^{\hphantom{1}}{a}_{2}^{2}{a}_{3}^{4}{a}_{4},
                                                            {a}_{1}^{2}{a}_{2}{a}_{3}^{4}{a}_{4}^{\hphantom{1}} 
                                                            \rangle
  \end{array}
  \]
  with radical
  $\sqrt{I(\mathcal L_I) )} =\langle a_1a_3a_4,a_2a_3a_4\rangle =
  \langle a_1,a_2\rangle\cap\langle a_3\rangle\cap \langle
  a_4\rangle$.
\end{remark}

\begin{example}
  Proposition~\ref{monomial ci} only gives us that $\mathcal L_I$ is
  defined {\em set-theoretically} by the equation $a_1 \cdots a_n = 0$
  in the cases given by (\ref{MonCI3}). Scheme-theoretically,
  $\mathcal L_I$ is defined by an ideal generated by maximal minors of
  certain matrices as seen in Section~\ref{Sec:Prel}.  For instance,
  if $n=3$ and $d_1=d_2=d_3=4$, the Hilbert function is
  $(1,3,6,10,12,12,10,6,3,1)$ and the defining polynomial of
  $\mathcal L_I$ is $a_1^4 a_2^4 a_3^4$.  More generally, if $n=3$ and
  $d_1=d_2=d_3=d$ where $d$ is even, then the Hilbert function of
  $R/I$ is $(1,h_1, \dots ,h_e)$ with $e=3d-3$ and
  $h_{\frac{3d-4}{2}} = h_{\frac{3d-2}{2}} = 3\left (\frac{d}{2}
  \right)^2$
  and the defining polynomial of $\mathcal L_I$ is
  $(a_1a_2a_3)^{{\left ( \frac{d}{2} \right )}^2}$.
\end{example}

This example leads to the following two immediate corollaries.

\begin{corollary} \label{divisible} If $\langle F_1,\dots,F_n \rangle$
  is any complete intersection in $k[x_1,\dots,x_n]$ with
  $\deg F_1 = \dots = \deg F_n = d$, and if $n(d-1)$ is odd (i.e. if
  $n$ is odd and $d$ is even), then the value of the Hilbert function
  in degrees $\frac{n(d-1)-1}{2}$ and $\frac{n(d-1)+1}{2}$ is
  divisible by $n$.
\end{corollary}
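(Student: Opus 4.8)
The plan is to reduce to the monomial complete intersection and then count a basis of monomials. First I would invoke the standard fact that the Hilbert function of a complete intersection in $n$ variables all of whose generators have degree $d$ is independent of the chosen regular sequence: by the Koszul complex it equals the Hilbert function of $A = R/\langle x_1^d,\dots,x_n^d\rangle$, with Hilbert series $(1+t+\cdots+t^{d-1})^n$. So it suffices to treat this particular $A$. Writing $e = n(d-1)$ for the socle degree (odd by hypothesis) and $m = \frac{e-1}{2}$, we have $\frac{e+1}{2} = e-m$, and Gorenstein duality gives $h_A(e-m) = h_A(m)$; hence it is enough to prove $n \mid h_A(m)$.

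Next I would exploit the cyclic symmetry. A $k$-basis of $[A]_m$ is the set $\mathcal B$ of monomials $x_1^{a_1}\cdots x_n^{a_n}$ with $0\le a_j\le d-1$ for all $j$ and $a_1+\cdots+a_n=m$, and the cyclic group $G=\mathbb Z/n\mathbb Z$ acts on $\mathcal B$ by rotating the variables. The goal becomes: every $G$-orbit has size exactly $n$, which immediately yields $n\mid|\mathcal B| = h_A(m)$. The arithmetic heart of this is that $\gcd(m,n)=1$: from $2m = n(d-1)-1$ one reads off $\gcd(2m,n)=\gcd(-1,n)=1$, whence $\gcd(m,n)=1$. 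Now, given $x^a = x_1^{a_1}\cdots x_n^{a_n}\in\mathcal B$ with stabilizer $H\le G$ of order $h$, fixedness under $H$ forces the exponent vector $(a_1,\dots,a_n)$ to be constant on each of the $n/h$ cosets of $H$ in $G$, so $h$ divides $\sum_j a_j = m$. Combined with $h\mid n$ and $\gcd(m,n)=1$ this gives $h=1$, i.e.\ the orbit of $x^a$ has full size $n$.

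I expect the only genuinely delicate point to be this last orbit-size step: an odd integer $n$ need not be prime, so a priori a cyclic action could have orbits of intermediate size, and the computation $\gcd(m,n)=1$ is precisely what rules out any nontrivial stabilizer. Everything else — the independence of the Hilbert function from the choice of regular sequence, and the monomial description of $[A]_m$ — is routine, and (if one prefers to avoid Gorenstein duality) the divisibility in degree $\frac{e+1}{2}$ follows from the same orbit argument, using $\gcd\!\left(\frac{e+1}{2},n\right)=1$.
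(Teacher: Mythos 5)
Your proof is correct, but it takes a genuinely different route from the paper's. The paper gets the divisibility as a byproduct of its Lefschetz analysis: since all generator degrees are equal and $e=n(d-1)$ is odd, the non-Lefschetz locus of the monomial complete intersection is cut out by the determinant of a square matrix, i.e.\ by a single form of degree $h_{\frac{e-1}{2}}$ in $a_1,\dots,a_n$; by Proposition~\ref{monomial ci}~(\ref{MonCI3}) its zero set is $\{a_1\cdots a_n=0\}$, and the symmetry of $\langle x_1^d,\dots,x_n^d\rangle$ under permuting the variables forces this form to be $(a_1\cdots a_n)^{\alpha}$ up to scalar, whence $h_{\frac{e-1}{2}}=n\alpha$ (this is exactly the generator recorded in Corollary~\ref{NLL for monomials}). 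You instead reduce to the monomial complete intersection via the Koszul resolution (the paper does this implicitly, since the Hilbert function depends only on the degrees) and then count monomials directly: cyclic rotation of the variables acts on the monomial basis in degree $m=\frac{n(d-1)-1}{2}$, the congruence $2m\equiv -1 \pmod n$ gives $\gcd(m,n)=1$, and your stabilizer computation (a stabilizer of order $h$ makes the exponent vector constant on cosets, so $h\mid m$ as well as $h\mid n$) shows every orbit has full size $n$; the degree $\frac{n(d-1)+1}{2}$ is handled by duality or the same coprimality. Your argument is more elementary and self-contained: it needs no Lefschetz-theoretic input, no determinantal description, and in fact no characteristic-zero hypothesis, and it correctly isolates the one delicate point (ruling out intermediate orbit sizes when $n$ is not prime). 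What the paper's route buys in exchange is that it comes essentially for free from Proposition~\ref{monomial ci} and simultaneously identifies the defining polynomial $a_1^{\alpha}\cdots a_n^{\alpha}$ of the non-Lefschetz locus, which is the real point of Corollary~\ref{NLL for monomials}.
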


\vskip 2mm
\begin{corollary} \label{NLL for monomials} Let
  $I = \langle x_1^{d_1}, \dots, x_n^{d_n} \rangle$. If
  $d_1 = \dots = d_n = 2$ and $n$ is even then the non-Lefschetz locus
  $\mathcal L_I$ has codimension 2.  In all other cases, it has
  codimension 1.  Furthermore, if $d_1 = \dots = d_n = d$ where $n$ is
  odd and $d$ is even, then
  $I(\mathcal L_I)=(a_1^{\alpha} · \cdots · a_n^{\alpha})$ where
  $\alpha =\frac{1}{n}h_{\frac{n(d-1)-1}{2}}$ and
  $h_{\frac{n(d-1)-1}{2}} = h_{\frac{n(d-1)+1}{2}}$.  When $d=2$, this
  is equal to $\binom{n}{\frac{n-1}{2}}$.
\end{corollary}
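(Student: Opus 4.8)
The plan is to derive Corollary~\ref{NLL for monomials} directly from Proposition~\ref{monomial ci} together with the explicit description of $I(\mathcal L_I)$ in the case of equal even degrees in odd codimension. First I would split into the three cases of Proposition~\ref{monomial ci} according to whether $d_n > \lfloor\frac{e+1}{2}\rfloor$, or $e$ is even and $d_n \le \lfloor\frac{e+1}{2}\rfloor$, or $e$ is odd and $d_n \le \lfloor\frac{e+1}{2}\rfloor$. In case~\ref{MonCI1}, the complement of $\mathcal L_I$ is $\{a_n \ne 0\}$, so $\mathcal L_I$ is set-theoretically the hyperplane $a_n = 0$ and hence has codimension $1$. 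In case~\ref{MonCI3}, the complement is $\{a_1 a_2 \cdots a_n \ne 0\}$, so again $\mathcal L_I$ has codimension $1$ (it is the union of the $n$ coordinate hyperplanes). The remaining subtlety is case~\ref{MonCI2}, where $e$ is even and $d_n \le \lfloor\frac{e+1}{2}\rfloor$: here $\ell$ is Lefschetz iff at most one $a_i$ vanishes and $a_j \ne 0$ whenever $d_j > 2$. If some $d_j > 2$, then the complement is nonempty and $\mathcal L_I$ is contained in the union of the hyperplanes $\{a_j = 0\}$ for those $j$ with $d_j > 2$ (plus the codimension-$2$ pieces $\{a_i = a_k = 0\}$), so it still has codimension $1$. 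The only way to get codimension $2$ is when \emph{no} $d_j$ exceeds $2$, i.e. $d_1 = \cdots = d_n = 2$; then $e = n$ must be even, and $\ell$ is Lefschetz iff at most one $a_i$ vanishes, so $\mathcal L_I = \bigcup_{i<k} \{a_i = a_k = 0\}$ has codimension exactly $2$.

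Next I would handle the final assertions about the generator of $I(\mathcal L_I)$ when $d_1 = \cdots = d_n = d$ with $n$ odd and $d$ even. In that situation $e = n(d-1)$ is odd, so we are in case~\ref{MonCI3} and by Corollary~\ref{Scheme-theoretic} we have $\mathcal L_I = \mathcal L_{I,i}$ scheme-theoretically with $i = \lfloor\frac{e-1}{2}\rfloor = \frac{n(d-1)-1}{2}$, so $I(\mathcal L_I)$ is the principal ideal generated by the determinant of the square matrix representing $\times\ell\colon S\otimes_k[A]_i \to S\otimes_k[A]_{i+1}$ (square because $h_i = h_{i+1}$ by symmetry of the Hilbert function and unimodality). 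The monomial-complete-intersection structure forces this determinant to be a monomial in $a_1,\dots,a_n$, and by the full $S_n$-symmetry of the problem it must be $(a_1\cdots a_n)^{\alpha}$ for some $\alpha$; comparing degrees gives $n\alpha = h_i$, i.e. $\alpha = \frac{1}{n} h_{\frac{n(d-1)-1}{2}}$ (an integer by Corollary~\ref{divisible}), and this matches the computation of the defining polynomial of $\mathcal L_I$ recorded in the preceding example. Finally, in the sub-case $d = 2$ one just evaluates: the Hilbert function of $k[x_1,\dots,x_n]/(x_1^2,\dots,x_n^2)$ has middle value $\binom{n}{\lfloor n/2\rfloor}$, and since $n$ is odd this equals $\binom{n}{\frac{n-1}{2}}$, so $n\alpha = \binom{n}{\frac{n-1}{2}}$ would give $\alpha = \frac{1}{n}\binom{n}{\frac{n-1}{2}}$ — but wait, one must be careful, since when $d=2$ each $a_i$ appears to the first power only (as $a_i^{d-1} = a_i$), so in fact $\alpha = 1$ and the quantity $\binom{n}{\frac{n-1}{2}}$ is the value of $h_{\frac{n-1}{2}} = h_{\frac{n(d-1)-1}{2}}$ itself, which is what the statement says.

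The main obstacle I anticipate is not the codimension count, which is essentially bookkeeping over the three cases of Proposition~\ref{monomial ci}, but rather justifying rigorously that $I(\mathcal L_I)$ is \emph{exactly} the principal monomial ideal $(a_1^{\alpha}\cdots a_n^{\alpha})$ rather than merely having this as its radical or as a factor. For this one needs that the determinant of the middle multiplication matrix is a monomial of the asserted shape; the cleanest argument is to observe that for a monomial complete intersection the matrix of $\times\ell$ in the monomial basis has a very rigid combinatorial structure, and to invoke the explicit formula already established in the example (which the paper states as known for $n = 3$ and which extends by the same reasoning), or alternatively to use that $\mathcal L_I$ is reduced away from the coordinate subspaces of codimension $\ge 2$ together with the degree formula from Lemma~\ref{expected} — the expected degree of a codimension-one determinantal locus of an $h_i \times h_i$ matrix of linear forms is $h_i$, forcing $\deg(a_1^{\alpha}\cdots a_n^{\alpha}) = n\alpha = h_i$. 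I would present the codimension statement first and in full, then state that the precise form of the generator follows from the preceding example and Corollary~\ref{Scheme-theoretic}, and close with the $d=2$ evaluation.
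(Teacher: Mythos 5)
Your overall route is the same as the paper's: the codimension statements are read off from the three cases of Proposition~\ref{monomial ci}, and for $d_1=\cdots=d_n=d$ with $n$ odd and $d$ even the generator of $I(\mathcal L_I)$ is pinned down by combining Corollary~\ref{Scheme-theoretic} (the ideal is principal, generated by the determinant of the square middle-degree matrix), the set-theoretic description from case~(3) of Proposition~\ref{monomial ci} (which forces every irreducible factor of that determinant to be one of the $a_i$, and each $a_i$ to divide it), the symmetry of $I$ under permutations of the variables (equal exponents), and the degree count $n\alpha=h_{\frac{n(d-1)-1}{2}}$, with integrality of $\alpha$ from Corollary~\ref{divisible}. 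This is exactly the paper's argument. I would only replace the vague appeal to ``the monomial-complete-intersection structure'' by the support-plus-symmetry argument just described (or by invariance under the torus rescaling the $a_i$), which you essentially sketch yourself as your alternative; and in Case~(2) of the codimension analysis, what you need is that each hyperplane $\{a_j=0\}$ with $d_j>2$ is \emph{contained in} $\mathcal L_I$ (giving codimension exactly one together with nonemptiness of the complement), not that $\mathcal L_I$ is contained in their union.

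There is, however, one concretely wrong step: the ``but wait'' correction in your $d=2$ evaluation. It is not true that $\alpha=1$ when $d=2$. For $n=5$ the middle Hilbert value is $\binom{5}{2}=10$, the middle matrix is $10\times 10$, its determinant has degree $10$, and the generator is $(a_1\cdots a_5)^2$, so $\alpha=2=\frac{1}{5}\binom{5}{2}$, in accordance with your own (correct) identity $n\alpha=h_{\frac{n(d-1)-1}{2}}$ and with the statement $\alpha=\frac{1}{n}h_{\frac{n(d-1)-1}{2}}$. The heuristic ``each $a_i$ appears to the first power only, since $a_i^{d-1}=a_i$'' confuses the degree of the individual matrix entries (which are indeed linear in the $a_i$) with the exponents in the determinant, which is a product of $h_{\frac{n-1}{2}}$ entries. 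The final sentence of the corollary asserts only that $h_{\frac{n(d-1)-1}{2}}=\binom{n}{\frac{n-1}{2}}$ when $d=2$; your first computation, before the ``correction,'' was the right one, and it gives $\alpha=\frac{1}{n}\binom{n}{\frac{n-1}{2}}$ (so $\alpha=1$ only for $n=3$). Deleting the $\alpha=1$ claim, the rest of your proposal stands and coincides with the paper's proof.
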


\begin{proof}
  The ideas are contained in the proof of Proposition \ref{monomial
    ci}.  In particular, under the hypothesis $d_1 = \dots = d_n = d$
  where $n$ is odd and $d$ is even, the expected codimension of the
  non-Lefschetz locus is achieved, namely codimension 1.  In this case
  the degree of the non-Lefschetz locus is equal to
  $h_{\frac{n(d-1)-1}{2}}$, and the generating polynomial has to be
  symmetric with respect to all $n$ variables. The fact that $\alpha$
  is an integer is guaranteed by Corollary~\ref{divisible}.
\end{proof}

\begin{remark}
  One can also study the non-Lefschetz locus with respect to the
  Strong Lefschetz Property.  Junzo Watanabe has communicated to us
  that he has extended Corollary~\ref{NLL for monomials} for the
  question of the Strong Lefschetz Property, showing that
  $a_1 x_1 + \dots + a_n x_n$ is a Strong Lefschetz element for
  $R/(x_1^2, \dots, x_n^2)$ if and only if $a_1 a_2 \dots a_n \neq 0$.
  Thus the non-Lefschetz locus for $R/(x_1^2, \dots, x_n^2)$ for the
  Strong Lefschetz Property has codimension 1, not 2 as it was for the
  non-Lefschetz locus for the Weak Lefschetz Property.

  Notice that if a linear form $\ell$ is a non-Weak-Lefschetz element
  for $R/I$ then of course it is a non-Strong-Lefschetz element, so
  Watanabe's case is the only one left open by Corollary \ref{NLL for
    monomials}.
\end{remark}

\subsection{Jordan types}

Multiplication by a linear form $\ell$ corresponds to a nilpotent
linear operator on the artinian algebra $A$. The Jordan type of this
nilpotent operator is an integer partition $P_L$ of $\dim_k A$.

The study of Jordan types refines the study of Lefschetz properties as
we have the following:
\begin{itemize}
\item $\ell$ is a weak Lefschetz element if and only if the number of
  parts of $P_L$ equals the maximal value of the Hilbert function of
  $A$.
\item $\ell$ is a strong Lefschetz element if and only if $P_L$ equals
  the dual partition the partition given by the Hilbert function of
  $A$.
\end{itemize}

Here we investigate the possible Jordan types of linear forms for the
case when $A$ is a monomial complete intersection.

For a degree sequence $d_1,d_2,\dots,d_n$ let $P_{d_1,d_2,\dots,d_n}$
denote the dual partition to the partition given by the Hilbert
function of an artinian complete intersection of type
$(d_1,d_2,\dots,d_n)$. For a partition $P$ we denote by $P^k$ the
partition given by repeating all parts of $P$ $k$ times.

\begin{proposition}\label{prop:Jordan} Let
  $A=k[x_1,x_2,\dots,x_n]/\langle
  x_1^{d_1},x_2^{d_2},\dots,x_n^{d_n}\rangle$
  be a monomial complete intersection in characteristic zero. The
  possible Jordan types for linear forms $\ell$ are
  $P_{d_{i_1},d_{i_2},\dots,d_{i_k}}^{m}$, where
  $m = \prod_{j=1}^n d_j/\prod_{j=1}^k d_{i_j}$, for all non-empty subsequences
  ${d_{i_1},d_{i_2},\dots,d_{i_k}}$ of $d_1,d_2,\dots,d_n$.
\end{proposition}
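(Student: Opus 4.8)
The plan is to decompose the linear form $\ell = a_1 x_1 + \dots + a_n x_n$ according to its support, just as in the proof of Proposition~\ref{monomial ci}. Let $S = \{i : a_i \ne 0\}$ and (after reindexing) suppose $S = \{i_1,\dots,i_k\}$. Write $A = A' \otimes_k A''$, where $A' = k[x_i : i \in S]/\langle x_i^{d_i} : i \in S\rangle$ and $A'' = k[x_j : j \notin S]/\langle x_j^{d_j} : j \notin S\rangle$. Since $\ell$ annihilates $A''$, multiplication by $\ell$ on $A$ is the operator $(\times\ell_{A'}) \otimes \mathrm{id}_{A''}$, where $\ell_{A'} = \sum_{i \in S} a_i x_i$ is a linear form of full support on $A'$. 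The Jordan type of a tensor product $\varphi \otimes \mathrm{id}_V$ is simply the Jordan type of $\varphi$ with every part repeated $\dim_k V$ times; here $\dim_k A'' = \prod_{j \notin S} d_j = \prod_{j=1}^n d_j / \prod_{t=1}^k d_{i_t} = m$. So it remains to show that a linear form of full support on a monomial complete intersection $A'$ of type $(d_{i_1},\dots,d_{i_k})$ has Jordan type exactly $P_{d_{i_1},\dots,d_{i_k}}$, i.e.\ the dual of the Hilbert-function partition.

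The second ingredient is the theorem of Stanley--Watanabe--Reid--Roberts--Roitman (\cite{stanley}, \cite{watanabe}, \cite{RRR}) that a monomial complete intersection in characteristic zero has the \emph{Strong} Lefschetz Property, with $\ell = x_1 + \dots + x_k$ (or any full-support linear form, which works here since the $a_i$ are all nonzero and we are in characteristic zero) a strong Lefschetz element. As recorded in the bulleted list preceding the statement, $\ell$ is a strong Lefschetz element precisely when its Jordan type is the dual partition to the Hilbert-function partition of $A'$. Hence for every nonempty subsequence we obtain the Jordan type $P_{d_{i_1},\dots,d_{i_k}}^m$, and since every linear form has some support $S$, these are exactly the possible Jordan types. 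The only point requiring a word of care is that a full-support linear form $\sum_{i \in S} a_i x_i$ with arbitrary nonzero coefficients (not just all $a_i = 1$) is still a strong Lefschetz element: this follows because the strong Lefschetz elements form a nonempty Zariski-open subset of $[A']_1$ invariant under the torus rescaling the variables, so it contains every linear form of full support.

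The main obstacle is really just making the tensor-product Jordan-type computation clean: one must verify that if $\varphi$ is nilpotent on $U$ with Jordan blocks of sizes $\lambda_1 \ge \lambda_2 \ge \cdots$, then $\varphi \otimes \mathrm{id}_V$ on $U \otimes_k V$ has Jordan blocks given by each $\lambda_s$ repeated $\dim_k V$ times. This is standard linear algebra (choose a Jordan basis of $U$ and tensor it with a basis of $V$; each block $\otimes V$ splits as $\dim_k V$ copies of that block), but it is the one place where the argument is not purely a citation. Everything else is bookkeeping: identifying $\dim_k A'' = m$, matching the tensor decomposition of $A$ with the support of $\ell$, and invoking the SLP result. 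I would also note explicitly that distinct subsequences can give the same partition (e.g.\ by symmetry when several $d_i$ coincide), so the statement is about the \emph{set} of partitions that occur, with no claim of a bijection with subsequences.
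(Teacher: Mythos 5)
Your proposal is correct and follows essentially the same route as the paper: decompose $A \cong A' \otimes A''$ according to the support of $\ell$, use the torus action to reduce to a full-support (hence strong Lefschetz) element on $A'$, and observe that tensoring with $\mathrm{id}_{A''}$ repeats each Jordan block $\dim_k A'' = m$ times. The only difference is that you spell out the torus-invariance argument for SLP and the tensor-product block computation, which the paper leaves implicit.
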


\begin{proof}
  From the action of the torus $(k^\ast)^n$ we see that the Jordan
  type of a linear form $\ell = a_1x_1+a_2x_2+\cdots+a_nx_n$ depends
  only on which coefficients are non-zero. Let
  $\{i_1,i_2,\dots,i_k\}$ be the indices for which the coefficients
  are non-zero and let $\{j_1,j_2,\dots,j_{n-k}\}$ be the remaining
  indices.

  Let $A'$ be the artinian monomial complete intersection of type
  $(d_{i_1},d_{i_2},\dots,d_{i_k})$ and let $A''$ be the artinian
  mononomial complete intersection of type
  $(d_{j_1},d_{j_2},\dots,d_{i_{n-k}})$. We now have
  $A\cong A'\otimes A''$ and $\ell = \sum_{j=1}^k a_{i_j}x_{i_j}$ is a
  strong Lefschetz element acting on the first factor while it acts
  trivially on the second factor. Thus the Jordan type of $\ell$ is
  $P_{d_{i_1},d_{i_2},\dots,d_{i_k}}^m$, where
  $m = \dim_k A'' = = \prod_{j=1}^n d_j/\prod_{j=1}^k d_{i_j}$.
\end{proof}

\begin{example} The situation is easiest to summarize when all
  degrees are equal. Consider for example the case $n=4$ and
  $d_1=d_2=d_3=d_4=2$. There are combinatorially just four possible
  subseqences and the four possible Jordan types are
  \[ [5\,3^3\,1^2],\quad [4\,2^2]^2 = [4^2\,2^4],\quad [3\,1]^4 =
  [3^4\,1^4] \quad\text{and}\quad [2]^8,
  \]
  corresponding to the linear forms $x_1+x_2+x_3+x_4$, $x_1+x_2+x_3$,
  $x_1+x_2$ and $x_1$, respectively.
\end{example}
\section{The non-Lefschetz locus of a general complete
  intersection}\label{sec:general ci}

In the previous section we considered the non-Lefschetz locus of a
monomial complete intersection, and saw that it has codimension 1. We
also know that a general complete intersection has a non-Lefschetz
locus of positive codimension (since the complete intersection has the
WLP, thanks to the main result of \cite{stanley}, \cite{watanabe} and
\cite{RRR}).  The purpose of this section is to describe the precise
codimension of this locus for a general complete intersection.

\begin{notation} \label{nota} We begin in the setting of
  $R = k[x_1,\dots,x_n]$, and then turn to the case
  $n=3,4$. Throughout this section we will fix integers
  $2 \leq d_1 \leq \dots \leq d_n$, and $I$ will be a complete
  intersection ideal, $I = \langle F_1,\dots,F_n \rangle$, where
  $\deg F_i = d_i$ and $F_i$ is a general form of degree $d_i$. We
  will denote by $e$ the socle degree of $R/I$, namely
  $e = (\sum _{i=1}^n d_i) -n$.  We will denote by
  $(1,h_1,\dots,h_{e-1},h_e)$ the $h$-vector (i.e. Hilbert function)
  of $R/I$.
\end{notation}

We will describe the expected codimension of the non-Lefschetz locus
in Conjecture \ref{conj}.  One of our goals is to prove that for
$n =3$ or $4$, and for a general choice of $F_{i}$, $1\le i \le n$,
the non-Lefschetz locus $\mathcal L_I$ of
$I = \langle F_1,\dots,F_n \rangle$ has the expected codimension.

\begin{remark} \label{e even} When the $F_i$ are general, we know that
  $R/I$ has the WLP, so $\mathcal L_I \neq (\mathbb P^{n-1})^\ast$.
  In the case where the socle degree $e$ is odd, the Hilbert function
  of $R/I$ has at least two values in the middle that are equal.
  Thanks to Corollary~\ref{Scheme-theoretic}, this means that
  $\mathcal L_I$ is defined by the vanishing of the determinant of a
  square matrix of size $h_{\frac{e-1}{2}} \times h_{\frac{e+1}{2}}$,
  hence (since $\mathcal L_I \neq (\mathbb P^{n-1})^\ast$)
  $\mathcal L_I$ is a hypersurface of degree
  $\delta_I = h_{\frac{e-1}{2}}$.  So the case of odd socle degree is
  completely understood, and from now on we will assume without loss
  of generality that $e$ is even.
\end{remark}

Based on computer experiments~\cite{M2} and our results in four or
fewer variables, we make the following conjecture.

\begin{conjecture}\label{conj} Let      $I = \langle F_1, \cdots , F_n \rangle \subset R$ be
  a complete intersection ideal of general forms as in Notation
  \ref{nota}, and assume that $e$ is even (see Remark \ref{e
    even}). Then
  \[
  \codim \mathcal L_I = \min \{ h_{\frac{e}{2}}-h_{\frac{e}{2} -1}+1 ,
  n \}
  \]
  where we consider the empty set to have codimension $n$ in
  $\mathbb P^{n-1}$.  In particular,
  $\mathcal L_I\subset (\PP^{n-1})^\ast$ is non-empty if and only if
  $h_{\frac{e}{2}}-h_{\frac{e}{2} -1}\le n-2$ and in that case
  $\delta _I:=\deg(\mathcal L_I)= \binom{ h_{\frac{e}{2}}}
  {h_{\frac{e}{2}}-h_{\frac{e}{2} -1}+1}$.
\end{conjecture}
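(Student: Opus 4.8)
The conjecture is open in general, so what follows is the strategy I would pursue; it is, I believe, the route behind Theorems~\ref{LI,n=3,d1d2d3} and~\ref{LI,n=4,d1d2d3d4}. The plan is first to collapse everything to a single multiplication map. Since $R/I$ is Gorenstein of even socle degree $e$, Corollary~\ref{Scheme-theoretic} gives $\mathcal L_I=\mathcal L_{I,\,e/2-1}$ scheme-theoretically, namely the locus where $\times\ell\colon[A]_{e/2-1}\to[A]_{e/2}$ fails to be \emph{injective} (surjectivity cannot fail, because a general complete intersection has the WLP, so its Hilbert function is unimodal and $h_{e/2-1}=h_{e/2+1}\le h_{e/2}$). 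Thus $\mathcal L_I$ is the maximal-minor scheme of the $h_{e/2}\times h_{e/2-1}$ matrix $B(\ell)$ of linear forms in the dual variables representing this map. The standard codimension bound for determinantal ideals gives $\codim\mathcal L_I\le h_{e/2}-h_{e/2-1}+1$, and trivially $\codim\mathcal L_I\le n$; and once the equality $\codim\mathcal L_I=h_{e/2}-h_{e/2-1}+1\le n$ holds, the Eagon--Northcott complex yields $\deg\mathcal L_I=\binom{h_{e/2}}{h_{e/2-1}-1}=\binom{h_{e/2}}{h_{e/2}-h_{e/2-1}+1}=\delta_I$. So the whole statement reduces to the single inequality $\codim\mathcal L_I\ge\min\{h_{e/2}-h_{e/2-1}+1,\ n\}$.

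Next I would reduce to exhibiting one good example. As $(F_1,\dots,F_n)$ ranges over the irreducible variety of $n$-tuples of forms of the prescribed degrees, the matrix $B(\ell)$, and hence the subscheme $\mathcal L_I\subset(\mathbb P^{n-1})^{\ast}$, varies in a family, so $\dim\mathcal L_I$ is upper semicontinuous and the set of tuples with $\codim\mathcal L_I\ge c$ is open; since $(\mathbb P^{n-1})^{\ast}$ is proper, the set of tuples with $\mathcal L_I=\emptyset$ is open as well. Hence it suffices, for each admissible degree sequence, to produce a \emph{single} complete intersection whose non-Lefschetz locus has codimension at least $\min\{h_{e/2}-h_{e/2-1}+1,\ n\}$ (with codimension $n$ meaning ``empty''). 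The monomial complete intersection is of no use here: by Corollary~\ref{NLL for monomials} its non-Lefschetz locus has codimension $1$, which matches the expected value only when the Hilbert function is flat in the middle, so a genuinely non-monomial model is needed.

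The hard part is building and controlling that model. I would use Macaulay duality, writing $A=R/\operatorname{Ann}(G)$ for a form $G$ of degree $e$ in the dual polynomial ring, and rephrase failure of injectivity at $[\ell]$ as: the subspace $W:=R_{e/2-1}\circ G$ of the degree-$(e/2+1)$ part of the dual ring contains a nonzero form killed by $\ell$ --- equivalently, after the appropriate change of coordinates, a nonzero form not involving the variable dual to $\ell$. The goal is then to choose the complete intersection --- i.e.\ choose $G$ inside the family of dual socle generators of complete intersections of the given type --- so that the resulting incidence correspondence between points $[\ell]$ and such forms has the expected dimension; a dimension count for a generic such $G$, followed by a Bertini-type descent to an honest complete intersection, or an explicit induction on the $d_i$ with computer-verified base cases, should then produce the bound. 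For $n=3$ there is extra leverage from the Buchsbaum--Eisenbud structure theorem and from liaison within codimension-three Gorenstein algebras; for $n=4$ I would try to cut down to the $n=3$ case by a general hyperplane section, tracking how $\mathcal L_I$ behaves.

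The genuine obstacle --- and presumably the reason the conjecture is out of reach for $n\ge 5$ --- is that $B(\ell)$ is very far from a generic matrix of linear forms: its entries satisfy the catalecticant-type relations forced by commutativity and associativity in $A$, so there is no formal reason for its minor ideal to have the expected codimension, and any proof must exploit the special structure of complete intersections. I expect essentially all the difficulty to be concentrated in producing and analyzing the example (or the inductive step) in the third paragraph; the first two reduction steps are formal.
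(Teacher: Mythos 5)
The statement you are addressing is a conjecture, and the paper does not prove it in general; it establishes it only for $n\le 4$ (Theorems~\ref{LI,n=3,d1d2d3} and~\ref{LI,n=4,d1d2d3d4}) together with the special cases in Remark~\ref{large dn} and Proposition~\ref{special case of conj}. Your first two paragraphs are correct formal reductions and do parallel the paper's setup: the passage to the single middle map is Corollary~\ref{Scheme-theoretic}, the height and degree statements for the maximal-minor scheme are Lemma~\ref{expected} (Eagon--Northcott), and the semicontinuity/openness reduction is in the spirit of Remark~\ref{by one} and the incidence diagram~(\ref{big picture}). The genuine gap is your third paragraph, which is also where your claim that this is ``the route behind'' the two theorems goes wrong. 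Everything substantive is deferred to a construction (Macaulay dual socle generator, an incidence correspondence for a generic $G$, a ``Bertini-type descent,'' computer-verified base cases, and a hyperplane-section reduction from $n=4$ to $n=3$) that is never carried out and for which no dimension count is closed; producing, for each degree sequence, one complete intersection whose non-Lefschetz locus has the expected codimension is essentially the whole problem, and your sketch does not discharge it. In particular, cutting a codimension-four complete intersection by a general linear form yields an almost complete intersection in three variables, not a complete intersection, so the $n=4$ case cannot be reduced to the $n=3$ case of the conjecture in the way you suggest; this is exactly the object the paper has to analyze separately.

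For comparison, the paper's actual arguments are quite different from your plan. For $n=3$ it works directly with the syzygy bundle $\mathcal E$ of $I$ on $\PP^2$: $\mathcal E$ is $\mu$-stable by \cite{bs}, non-Lefschetz elements are exactly the jumping lines of $\mathcal E$ by \cite{BK}, and classical results on jumping lines of stable rank-two bundles (\cite{OSS} when the normalized $c_1$ is $0$, \cite{hulek} when it is $-1$) give at once a curve of degree $c_2$ or a set of $\binom{c_2}{2}$ points --- no example plus semicontinuity is needed, and the codimension and degree come out simultaneously. For $n=4$ the paper's reduction is dual to yours: it fixes the linear form $\ell$ and varies the ideal, showing via the diagram~(\ref{big picture}) that it suffices to prove that, inside the space $ACI_\ell(d_1,\dots,d_4)$ of almost complete intersections in $R/(\ell)$, the locus of ideals with too large Hilbert function in degree $\frac e2+1$ has codimension $h_{\frac e2}-h_{\frac e2-1}+1$; this codimension is computed by linking such ideals to codimension-three Gorenstein ideals and comparing $\dim Gor(H)$ using Diesel's parameter spaces and the Conca--Valla formula (Lemmas~\ref{dimGorH} and~\ref{HF,n=4,d1d2d3d4}). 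Your ``one good example plus semicontinuity'' strategy is legitimate in principle (it is how the paper proves Theorem~\ref{gormain} for codimension-three Gorenstein algebras, using points with UPP), but as written it contains no construction that achieves the required codimension, so the key inequality $\codim\mathcal L_I\ge\min\{h_{\frac e2}-h_{\frac e2-1}+1,\,n\}$ remains unproved in your proposal, even in the cases $n=3,4$ that the paper settles.
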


\begin{remark} Notice that in Conjecture \ref{conj} the hypothesis
  that the complete intersection artinian ideal $I\subset R$ is
  generated by general forms cannot be dropped.  In fact, a complete
  intersection $I\subset k[x_1,x_2,x_3]$ of type $(3,3,3) $ has
  $h$-vector $(1 , 3 , 6 , 7 , 6 , 3 , 1)$, so the expected
  codimension of the non-Lefschetz locus $\mathcal L_I$ is 2; and we
  will see later that indeed it is true for a general choice of 3
  cubics $F_1,F_2,F_3\in k[x_1,x_2,x_3]$ (cf. Theorem
  \ref{LI,n=3,d1d2d3}).  But unfortunately it is not true for {\em
    every } choice.  For instance, we saw in the last section that if
  we take
  $I = \langle F_1,F_2,F_3 \rangle=\langle x_1^3,x_2^3,x_3^3\rangle $
  we get that $\codim \mathcal L_I=1$ since a line
  $a_1x_1+a_2x_2+a_3x_3\in k[x_1,x_2,x_3]$ fails to be a Lefschetz
  element of $k[x_1,x_2,x_3]/\langle x_1^3,x_2^3,x_3^3\rangle $ if and
  only if $a_1a_2a_3=0$.  Therefore, if we fix coordinates $a_1$,
  $a_2$ and $a_3$ in $(\PP^2)^\ast $, the support of $\mathcal L_I$ is
  the union of the lines $\ell _1: a_1=0$, $\ell _2: a_2=0$ and
  $\ell _3: a_3=0$.
\end{remark}

\begin{remark} \label{by one} We will see shortly that to measure the
  non-Lefschetz locus in $(\mathbb P^{(n-1)})^\ast$, it will be enough
  to measure how many such algebras fail the WLP in a suitable
  irreducible parameter space.  As noted in Section~\ref{Sec:Prel}, if
  $R/I$ is a complete intersection and the WLP fails, it must fail "in
  the middle", and possibly also in other degrees.  By semicontinuity
  and under the hypothesis that e is even, to measure the dimension of
  the set of algebras failing the WLP (in an irreducible parameter
  space) we can assume that WLP fails from degree
  $h_{ \frac{e}{2} -1}$ to $h_{ \frac{e}{2} }$ (and, by duality, from
  $h_{ \frac{e}{2} }$ to $h_{ \frac{e}{2} +1 }$), and that the failure
  is just by one.
\end{remark}

\begin{remark}\label{large dn}
  We have $d_1 \leq \dots \leq d_n$.  For large values of $d_n$ the
  question of the non-Lefschetz locus for a general complete
  intersection with generator degrees $d_1,\dots d_n$ is clear.
  \begin{enumerate}
  \item If
    $d_n \geq d_1 + \dots + d_{n-1} - (n-1) + 2 = d_1 + \dots +
    d_{n-1} -n + 3$
    then $h_{\frac{e}{2}-1} = h_{\frac{e}{2}}$ (remembering that we
    are assuming $e$ even), and the conjecture is clear (with the
    non-Lefschetz locus consisting of the linear forms through
    individual points).  \medskip
  \item If
    $d_n = d_1 + \dots + d_{n-1} - (n-1) +1 = d_1 + \dots + d_{n-1} -n
    + 2$
    then $R/(F_1\dots F_{n-1})$ is the coordinate ring of the reduced
    complete intersection set of points, $Z$, in $\mathbb P^{n-1}$
    defined by $(F_1,\dots, F_{n-1})$, which reaches the multiplicity
    in degree $d_1 + \dots + d_{n-1} - (n-1) $.  If $\{h_i\}$ is the
    Hilbert function of $R/(F_1,\dots,F_n)$, then clearly \medskip
    \begin{itemize}
    \item[$\bullet$] $d_1 + \dots + d_{n-1} - (n-1) = \frac{e}{2}$;
      \medskip
    \item[$\bullet$] $h_{\frac{e}{2}} - h_{\frac{e}{2}-1} = 1$;
      \medskip
    \item[$\bullet$] $h_{\frac{e}{2}} = d_1 d_2 \dots d_{n-1}$.
      \medskip
    \item[$\bullet$] The Hilbert function of $R/I$ agrees with that of
      $R/I_Z$ in degrees $\leq \frac{e}{2}$.
    \end{itemize}
    \medskip
    \noindent Notice that $Z$ has the Uniform Position Property, since
    the $F_i$ are general.  We claim that \medskip
    \begin{quotation}
      {\em a linear form $\ell$ fails to have maximal rank from degree
        $\frac{e}{2}-1$ to degree $\frac{e}{2}$ if and only if $\ell$
        vanishes on (any) two points of $Z$.}
    \end{quotation}
    \medskip
    \noindent Indeed, if $P_1, P_2 \in Z$, notice first that the
    Hilbert function of $Z \backslash \{P_1 \}$ agrees with that of
    $Z$ up to and including degree $\frac{e}{2}-1$, and is one less
    than that of $Z$ from then on.  The Hilbert function of
    $Z \backslash \{ P_1,P_2 \}$ agrees with that of $Z$ up to and
    including degree $\frac{e}{2} -2$, is one less than that of $Z$ in
    degree $\frac{e}{2}-1$, and is two less than that of $Z$ from
    degree $\frac{e}{2}$ on. In particular, there is a form of degree
    $\frac{e}{2} -1$ vanishing on all of $Z$ except $P_1 \cup P_2$,
    but the same is not true for all of $Z$ except only $P_1$.

    Since $R/I_Z$ has depth 1, a linear form $\ell$ not vanishing on
    any point of $Z$ is a non-zerodivisor, so the resulting
    multiplication from degree $\frac{e}{2}-1$ to degree $\frac{e}{2}$
    is injective.  If $\ell$ vanishes at just one point, $P_1$, of
    $Z$, then for a form $F$ of degree $\frac{e}{2}-1$,
    $\ell \cdot F = 0$ in $R/I$ means that $F$ vanishes at all points
    of $Z$ except $P_1$.  But we know that any form of degree
    $\frac{e}{2}-1$ vanishing at all but one point must in fact vanish
    on all of $Z$, so $F = 0$ in $R/I$.  On the other hand, any linear
    form vanishing on the line spanned by $P_1$ and $P_2$ lies in the
    non-Lefschetz locus, which then has codimension 2 and degree
    $\binom{h_{\frac{e}{2}}}{2}$ as claimed in Conjecture \ref{conj}.
    \medskip
  \item If $d_n = d_1 + \dots + d_{n-1} -n+1$ then $R/I$ has odd socle
    degree, so the non-Lefschetz locus has codimension 1 and degree
    $d_1 \dots d_{n-1} -1$.  \medskip
  \item Finally, assume that $d_n = d_1 + \dots + d_{n-1}-n$.  In this
    case $\langle F_1,\dots,F_{n-1} \rangle$ defines a complete
    intersection set of $d_1 \cdots d_{n-1}$ points, $Z$, and its
    Hilbert function reaches its multiplicity in degree
    $d_1 + \dots + d_{n-1} - n+1 = \deg F_n +1$.  More precisely,
    letting $s = d_1 + \dots + d_{n-1}$ and $d = d_1 \cdots d_{n-1}$,
    its Hilbert function is \medskip
    \[
    \begin{array}{c|ccccccccccccccc}
      \text{degree} & 0 & 1 & 2 & \dots & (s-n-1) & (s-n) & (s-n+1) & (s-n+2) & \dots  \\ \hline
                    & 1 & n & h_2 & \dots & d-n & d-1 & d & d &\dots 
    \end{array}
    \]
    \noindent and the Hilbert function of $R/I$ is
    \[
    \begin{array}{c|ccccccccccccccc}
      \text{degree} & 0 & 1 & 2 & \dots & (s-n-1) & (s-n) & (s-n+1) & \dots & e-1 & e  \\ \hline
                    & 1 & n & h_2 & \dots & d-n & d-2 & d-n & \dots & n & 1   
    \end{array}
    \]
    \noindent For a linear form $\ell \in R$, the failure of
    $\times \ell : [R/I]_{s-n-1} \rightarrow [R/I]_{s-n}$ to be
    injective is equivalent to the condition that the restriction
    $\bar F_n$ of $F_n$ to $R/\langle \ell \rangle$ is in the
    restricted ideal
    $\langle \bar F_1 , \dots , \bar F_{n-1} \rangle$. Since $I$ is
    artinian, it follows then that
    $\langle \bar F_1,\dots, \bar F_{n-1} \rangle$ is a complete
    intersection. In particular, $\ell$ is a non-zerodivisor on
    $R/\langle F_1,\dots,F_{n-1} \rangle$. We also note that in this
    situation, the conjectured codimension of $\mathcal L_I$ is
    $(d-2) - (d-n) +1 = n-1$ in $(\mathbb P^{n-1})^\ast$, i.e. there
    should only be a finite number of linear forms failing to induce
    an injective homomorphism from degree $s-n-1$ to degree $s-n$.
  \end{enumerate}

  Thus from now on we may assume that
  $d_n \leq d_1 + \dots + d_{n-1} - n$, and if equality holds we have
  an equivalent condition for failure to have maximal rank.
\end{remark}

Our goal in this section is to prove Conjecture \ref{conj} in the
cases $n=3$ and $n=4$.  We begin with a description of the approach
that we will take except for Theorem \ref{LI,n=3,d1d2d3}.  Fix degrees
$d_1,\dots,d_n$ for the complete intersections in
$R = k[x_1,\dots,x_n]$, with $d_1 \leq d_2 \leq \dots \leq d_n$.  Let
$CI(d_1,\dots,d_n)$ be the irreducible space parametrizing all such
complete intersections.  Let $(\mathbb P^{n-1})^\ast$ be the
projective space parametrizing the linear forms of $R$ (up to scalar
multiple).  For each complete intersection $I$ and linear form
$\ell $, we consider the pair
$(\ell ,I) \in (\mathbb P^{n-1})^\ast \times CI(d_1,\dots,d_n)$.  Let
$X$ be the set of such pairs such that $\ell $ is not a Lefschetz
element for $A= R/I$.

Since the $d_i$ are given, there is a precise degree where this latter
condition must be checked: $(\ell ,I) \in X$ if and only if
$\times \ell : [R/I]_{\frac{e}{2}-1} \rightarrow [R/I]_{\frac{e}{2}}$
fails to be injective (recall that the socle degree $e$ is assumed to
be even, thanks to Remark \ref{e even}).  Since the general element of
$CI(d_1,\dots,d_n)$ has the WLP, there are expected values for the
Hilbert function of $R/(I,\ell )$ in degrees $\frac{e}{2}$ and
$\frac{e}{2}+1$ (the latter being 0), and $(\ell , I ) \in X$ if and
only if these values are not achieved.

Consider the projections $\phi_1$ and $\phi_2$:
\begin{equation} \label{big picture}
  \begin{tikzpicture} [baseline=(current  bounding  box.center)]
    \node (v1) at (1,0) {$(\ell ,I)$}; \node (v2) at (2,0)
    {$\in$}; \node (v3) at (4.8,0)
    {$(\mathbb P^{n-1})^\ast \times
      CI(d_1,\dots,d_n)$}; \node (v4) at (7.6,0)
    {$\supset$}; \node (v5) at (8.3,0) {$X$};
    \node (v6) at (3,-1.3) {$(\mathbb
      P^{n-1})^\ast$}; \node (v7) at (6,-1.3)
    {$CI(d_1,\dots,d_n)$}; \draw [->] (v3) -- (v6); \draw [->] (v3) --
    (v7); \node at (3.5,-.55)
    {$\phi_1$}; \node at (5.8,-.55) {$\phi_2$};
  \end{tikzpicture}
\end{equation}

\noindent We need to show that there is a non-empty open set
$U \subset CI(d_1,\dots,d_n)$ such that if $I \in U$ then the closure
of $\phi_1(\phi_2^{-1}(I) \cap X)$ has the expected codimension as
described in Conjecture~\ref{conj}.  Thus we want to show that the
intersection of $X$ with the generic fibre of $\phi_2$ has the
expected dimension (computed from Conjecture \ref{conj}).  More
precisely, let
$m = (n-1) - \min \{ h_{\frac{e}{2}} - h_{\frac{e}{2}-1} +1,n \}$, the
expected dimension of $\mathcal L_I$, and let $I$ be a general element
of $CI(d_1,\dots,d_n)$.  Then Conjecture \ref{conj} says that
\begin{equation} \label{1st to show} \dim (\phi_2^{-1} (I)) \cap X =
  m.
\end{equation}
We will reformulate this.  Let $p = \dim CI(d_1,\dots,d_n)$.  We want
to show that there is an open subset $U \subset CI(d_1, \dots ,d_n)$
such that
\[
\dim ( \phi_2 ^{-1}(U) \cap X ) = m+p.
\]
Now, $\phi_1$ is surjective, and the fibres all have the same
dimension (since we can always do a change of variables).  Thus we
want to show that for any linear form $\ell $ (viewed as an element of
$(\mathbb P^{n-1})^\ast$),
\[
\dim ( \phi_2^{-1} (U) \cap X \cap \phi_1^{-1}(\ell )) = m+p-(n-1).
\]

So from now on we fix a linear form $\ell $.  We denote by
$ACI_{\ell }(d_1, \dots,d_n)$ the irreducible space of ideals in
$S = R/(\ell )$ with generators in degrees $d_1, \dots d_n$.  We note
that an ideal in $ACI_{\ell }(d_1,\dots,d_n)$ may have only $n-1$
minimal generators.  This would happen for instance if
$d_n > d_1 + \dots + d_{n-1} - (n-1)$ and $\ell$ is a non-zerodivisor
on $R/\langle F_1,\dots, F_{n-1} \rangle$, but we have assumed this
not to be the case in Remark \ref{large dn}.  But even avoiding this
situation, it may happen that an ideal in $CI(d_1,\dots,d_n)$
restricts to an ideal in $ACI_{\ell }(d_1, \dots,d_n)$ with only $n-1$
minimal generators.  Let $V \subset ACI_{\ell }(d_1,\dots,d_n)$ be the
open subset consisting of restricted ideals
$(\bar F_1,\dots, \bar F_n)$ such that all the $\bar F_i$ are minimal
generators.

Consider the morphism
\begin{equation} \label{big picture 2}
  \begin{tikzpicture} [baseline=(current  bounding  box.center)]
    \node (v3) at (4.8,0)
    {$CI(d_1,\dots,d_n)$}; \node (v6) at (4.8,-1.3)
    {$ACI_{\ell
      }(d_1,\dots,d_n)$.}; \draw [->] (v3) -- (v6); \node at (5.1,-.6)
    {$\phi$};
  \end{tikzpicture}
\end{equation}

We want to study a certain subvariety,
$Y \subset ACI_\ell(d_1,\dots,d_n)$. The precise definition of $Y$
will depend on the value of $d_n$, breaking into two cases, but the
treatment of $Y$ will be the same in both cases.

\medskip

\noindent \underline{Case 1}: $d_n = d_1 + \dots + d_{n-1} - n$.
\medskip We have seen in Remark \ref{large dn} (4) that in this case
$m=0$, and that failure of maximal rank is equivalent to
$\bar F_n \in \langle \bar F_1,\dots, \bar F_{n-1} \rangle$, which
then is a complete intersection.  By Remark \ref{by one}, or by direct
observation in this case, we can assume that the Hilbert function of
the restricted ideal differs by one, in degrees
$d_1 + \dots + d_{n-1} - n$ and $d_1 + \dots + d_{n-1} - (n-1)$, from
the expected one.  Let $Y \subset ACI_\ell (d_1,\dots,d_n)$ be the
subset in the complement of $V$ consisting of those ideals such that
the first $n-1$ generators form a regular sequence, and the last
generator is not minimal.

\medskip

\noindent \underline{Case 2}: $d_n < d_1 + \dots + d_{n-1} - n$.
\medskip In this case we let $Y \subset V$ be the set of ideals
$\bar I$ such that $h_{S/\bar I}(\frac{e}{2}+1) > 0$.  (The
distinction between the cases is that the ideals of $Y$ are complete
intersections in Case 1, and are not complete intersections in Case
2.)

\medskip

Notice that in both cases,
\[
\dim \phi^{-1} (Y) = \dim ( \phi_2^{-1} (U) \cap X \cap
\phi_1^{-1}({\ell })).
\]
Notice also that the fibres of $\phi$ over $V \cup Y$ all have the
same dimension, namely
\[
p - \dim ACI_{\ell } (d_1, \dots,d_n).
\]
So we want to show that
\[
\dim Y + p - \dim ACI_{\ell } (d_1, \dots,d_n) = m + p - (n-1),
\]
i.e. that
\[
\dim Y = m -(n-1) + \dim ACI_{\ell } (d_1, \dots,d_n).
\]
Equivalently, we want to show that
\begin{equation} \label{to show} \hbox{\em The codimension of $Y$ in
    $ACI_{\ell }(d_1,\dots,d_n)$ is
    $\min \{ h_{\frac{e}{2}} - h_{\frac{e}{2}-1}+1,n \}$.}
\end{equation}
This is what we will prove in the results below.

We have noted above that without loss of generality we can assume that
$d_n \leq d_1 + \dots + d_{n-1} - n$, and that the case of equality is
handled slightly differently from the case of strict inequality.  We
now consider equality.

\begin{proposition} \label{special case of conj} Let
  $I = \langle F_1,\dots,F_n \rangle \subset R = k[x_1,\dots,x_n]$ be
  a complete intersection generated by general forms of degrees
  $2 \leq d_1 \leq d_2 \leq \dots \leq d_n$.  Assume that
  $d_n = d_1 + \dots + d_{n-1} -n$.  Then Conjecture \ref{conj} is
  true.
\end{proposition}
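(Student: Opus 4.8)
The plan is to establish the reformulation~(\ref{to show}) in the present case (Case~1, $d_n=d_1+\dots+d_{n-1}-n$): it suffices to show that the subvariety $Y\subset ACI_\ell(d_1,\dots,d_n)$ just introduced has codimension exactly $\min\{h_{\frac{e}{2}}-h_{\frac{e}{2}-1}+1,\,n\}$. Write $s:=d_1+\dots+d_{n-1}$ and $d:=d_1\cdots d_{n-1}$. By Remark~\ref{large dn}(4) we have $e=2(s-n)$, so $\frac{e}{2}=s-n=d_n$, $h_{\frac{e}{2}-1}=d-n$, and $h_{\frac{e}{2}}=d-2$; hence $\min\{h_{\frac{e}{2}}-h_{\frac{e}{2}-1}+1,\,n\}=\min\{n-1,n\}=n-1$, and the task reduces to proving $\codim Y=n-1$. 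Granting this, the asserted value of $\codim\mathcal L_I$ follows from the chain of equalities preceding the statement, and the asserted degree $\delta_I=\binom{h_{\frac{e}{2}}}{h_{\frac{e}{2}}-h_{\frac{e}{2}-1}+1}=\binom{d-2}{n-1}$ follows from Lemma~\ref{expected}, since by Corollary~\ref{Scheme-theoretic} $\mathcal L_I=\mathcal L_{I,\frac{e}{2}-1}$ is then a determinantal scheme of the expected codimension.

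Recall that $Y$ consists of the restricted ideals $\langle\bar F_1,\dots,\bar F_n\rangle\subset S:=R/\langle\ell\rangle\cong k[y_1,\dots,y_{n-1}]$ for which $\bar F_1,\dots,\bar F_{n-1}$ is a regular sequence and $\bar F_n\in\langle\bar F_1,\dots,\bar F_{n-1}\rangle_{d_n}$, and (by Remark~\ref{large dn}(4)) this is precisely the locus where $\times\ell$ fails to be injective in the middle degree. Working with the parametrization of $ACI_\ell(d_1,\dots,d_n)$ by $n$-tuples of generating forms, I would project onto the first $n-1$ factors $\prod_{i=1}^{n-1}\mathbb P([S]_{d_i})$ and restrict to the dense open subset $W$ where $\bar F_1,\dots,\bar F_{n-1}$ is a regular sequence. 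Over $W$ the projection from $ACI_\ell(d_1,\dots,d_n)$ is the trivial $\mathbb P([S]_{d_n})$-bundle, and $Y$ (which lies entirely over $W$) is the projective subbundle with fibre $\mathbb P(\langle\bar F_1,\dots,\bar F_{n-1}\rangle_{d_n})$, of constant dimension since $\langle\bar F_1,\dots,\bar F_{n-1}\rangle$ has the Hilbert function of a complete intersection throughout $W$. Consequently the codimension of $Y$ in $ACI_\ell(d_1,\dots,d_n)$ equals
\[
\dim_k[S]_{d_n}-\dim_k\langle\bar F_1,\dots,\bar F_{n-1}\rangle_{d_n}=\dim_k\big[S/\langle\bar F_1,\dots,\bar F_{n-1}\rangle\big]_{d_n}.
\]

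It remains to evaluate this Hilbert function value. The ring $S/\langle\bar F_1,\dots,\bar F_{n-1}\rangle$ is an artinian complete intersection in the $(n-1)$-variable ring $S$ with generator degrees $d_1,\dots,d_{n-1}$, hence Gorenstein with socle degree $\sum_{i=1}^{n-1}(d_i-1)=s-n+1=d_n+1$. By the symmetry of its Hilbert function,
\[
\dim_k\big[S/\langle\bar F_1,\dots,\bar F_{n-1}\rangle\big]_{d_n}=\dim_k\big[S/\langle\bar F_1,\dots,\bar F_{n-1}\rangle\big]_{1}=n-1,
\]
the last equality holding because all $d_i\ge2$. Thus $\codim Y=n-1$, which completes the argument.

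I do not expect a serious obstacle: the substantive reductions (passing from $\codim\mathcal L_I$ to $\codim Y$, and identifying the failure of maximal rank with the membership $\bar F_n\in\langle\bar F_1,\dots,\bar F_{n-1}\rangle$) were already made in the discussion preceding the statement and in Remark~\ref{large dn}(4). The only genuinely new ingredient is the numerical coincidence that the restricted complete intersection $S/\langle\bar F_1,\dots,\bar F_{n-1}\rangle$ has socle degree exactly $d_n+1$, which by Gorenstein duality fixes the fibre codimension at $n-1$ with no further computation; the remaining points (openness and density of $W$, that $Y$ lies over $W$, and the harmless identification of ideals with ordered generating tuples for the dimension count) are routine.
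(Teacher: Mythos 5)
Your proof is correct and follows essentially the same route as the paper: both reduce via the reformulation~(\ref{to show}) to showing that $Y$ has codimension $n-1$ in $ACI_\ell(d_1,\dots,d_n)$, and both obtain this by fibering over the parameter space of the first $n-1$ generators and using that a complete intersection of type $(d_1,\dots,d_{n-1})$ in $R/\langle\ell\rangle$ has Hilbert function value $n-1$ in degree $d_n$. The only (welcome) addition on your side is that you justify this Hilbert function value explicitly via Gorenstein symmetry (socle degree $d_n+1$), whereas the paper simply recalls it.
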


\begin{proof}
  We have defined the quasi-projective variety $Y$ in Case 1 above.
  From what we said in Remark \ref{large dn} (4) and in Case 1 of the
  discussion above, we want to show that the codimension of $Y$ in
  $ACI_\ell(d_1,\dots,d_n)$ is $n-1$. We recall that a complete
  intersection of type $(d_1,\dots,d_{n-1})$ in
  $R/\langle \ell \rangle $ with $d_1 \geq 2$ has Hilbert function
  with value $n-1$ in degree $d_n = d_1 + \dots + d_{n-1} -n$.

  Now, let $\mathcal M_\ell (d_1,\dots,d_{n-1})$ be the variety
  parametrizing the ideals with generator degrees $d_1,\dots,d_{n-1}$,
  and let $U' \subset \mathcal M_\ell$ be the dense open subset
  consisting of complete intersections of type $(d_1,\dots,d_{n-1})$.
  Consider
  \begin{equation} \label{case 1 diag}
    \begin{tikzpicture} [baseline=(current  bounding  box.center)]
      \node (v1) at (1.5,0) {$Y$}; \node (v2) at (2,0)
      {$\subseteq$}; \node (v3) at (4,0)
      {$ACI_\ell (d_1,\dots,d_{n})$}; \node (v6) at (1.5,-1.3)
      {$U'$}; \node (v8) at (2,-1.3)
      {$\subseteq$}; \node (v7) at (4,-1.3)
      {$\mathcal
        M_\ell(d_1,\dots,d_{n-1})$ . }; \draw [->] (v3) -- (v7); \node
      at (4.4,-.6) {$\phi$};
    \end{tikzpicture}
  \end{equation}
  \noindent We have that $Y$ is contained in $\phi^{-1} (U')$, and
  $\phi^{-1} (U')$ is a dense open subset of \linebreak
  $ACI_\ell (d_1,\dots,d_n)$.  For any $J \in U'$, the codimension of
  $\phi^{-1} (J) \cap Y$ in $\phi^{-1} (J)$ is $n-1$, thanks to the
  Hilbert function observation above.  The desired conclusion (\ref{to
    show}) follows from this.
\end{proof}

Thus from now on we can assume that $d_n < d_1 + \dots + d_{n-1}-n$,
and that we are in Case 2 above.  To fix the ideas for most of the
rest of the paper in a simple first case, we first state the case
$n=3$ and $\deg(F_i) = d$ for $1 \leq i \leq 3$ and prove the
analogous case $n=4$ and $\deg (F_i) = d$ for $1 \leq i \leq 4$.

\begin{proposition} \label{LI,n=3} Let
  $I = \langle F_1, F_2 , F_3 \rangle \subset R=k[x_1,x_2,x_3]$ be a
  complete intersection generated by general forms of degree
  $(d,d,d)$, $d\ge 2$. Then we have
  \begin{itemize}
  \item[(1)] If $e$ is odd then $\codim \mathcal L_I =1$ and
    $\mathcal L_I\subset (\PP^{2})^\ast$ is a curve of degree
    $3(\frac{d}{2})^2$.
  \item[(2)] If $e$ is even then
    $\codim \mathcal L_I =h_{\frac{e}{2}}-h_{\frac{e}{2} -1}+1 =2$ and
    $\deg \mathcal L_I = \binom{h_{\frac{e}{2}}}{2} =
    \binom{\frac{3d^2+1}{4}}{2}$.
  \end{itemize}
\end{proposition}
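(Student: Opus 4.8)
The plan is to separate the two parity cases and in each to reduce to a Hilbert-function computation together with the machinery already assembled in this section.

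For part (1) the socle degree $e=3d-3$ is odd exactly when $d$ is even. Since every complete intersection in three variables has the WLP by \cite{HMNW}, we have $\mathcal L_I\ne(\mathbb P^2)^\ast$, so by Remark~\ref{e even} (via Corollary~\ref{Scheme-theoretic}) $\mathcal L_I$ is the hypersurface cut out by the determinant of the square matrix of $\times\ell\colon[R/I]_{(e-1)/2}\to[R/I]_{(e+1)/2}$, hence of degree $\delta_I=h_{(e-1)/2}$. I would then just compute this value: the Hilbert function of a complete intersection depends only on the generator degrees, so this is exactly the computation recorded for the monomial case in the Example following Proposition~\ref{monomial ci}, where extracting the coefficient of $t^{(3d-4)/2}$ in $(1+t+\cdots+t^{d-1})^3$ by inclusion--exclusion gives $3(d/2)^2$. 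This settles (1).

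For part (2) the socle degree is even, $d$ is odd, and $e/2=(3d-3)/2$. Writing $d=2m+1$ and extracting the coefficients of $t^{3m-1}$ and $t^{3m}$ in $(1+t+\cdots+t^{2m})^3$ by the same inclusion--exclusion, one gets $h_{e/2-1}=3m^2+3m=\tfrac{3d^2-3}4$ and $h_{e/2}=3m^2+3m+1=\tfrac{3d^2+1}4$, so $h_{e/2}-h_{e/2-1}=1$. Hence the expected codimension of $\mathcal L_I$ is $\min\{h_{e/2}-h_{e/2-1}+1,\,3\}=2$ and the expected degree is $\binom{h_{e/2}}2=\binom{(3d^2+1)/4}2$, which matches the statement. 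Since $\mathcal L_I\ne(\mathbb P^2)^\ast$, its codimension is at most the expected value $2$, so the only thing left to prove is that the general complete intersection of type $(d,d,d)$ actually realizes codimension $2$, i.e.\ that $\mathcal L_I$ has no divisorial component.

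That final assertion is the real content and is handled by the reduction set up above. For $d=3$ we are in Case~1 ($d_n=d_1+d_2-n$) and it is precisely Proposition~\ref{special case of conj}. For $d\ge5$ we are in Case~2: fixing $\ell$ and passing to restricted ideals $\bar I\subset S=R/(\ell)$, one must show that the locus $Y\subset ACI_\ell(d,d,d)$ of \eqref{to show} has codimension $2$; this is the $n=3$ analogue of the argument given for $n=4$ in the proposition proved next (and is in any case subsumed by Theorem~\ref{LI,n=3,d1d2d3}), so I would run that argument here --- parametrize $Y$ by the three restricted forms of degree $d$ in the two-variable ring $S$ and bound the dimension of the sublocus on which the cokernel of $\times\ell$ survives into degree $e/2+1$, then compare with $\dim ACI_\ell(d,d,d)$. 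The main obstacle is exactly this dimension count: proving that the condition ``$h_{S/\bar I}(e/2+1)>0$'' cuts the dimension down by $2$ rather than merely by $1$, which is where the generality of the $F_i$ (hence of the restricted ideal and its first syzygies) is genuinely needed; the parity split and the Hilbert-function bookkeeping are routine by comparison.
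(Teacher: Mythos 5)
Your part (1) and your reduction of part (2) to the framework of Remark~\ref{e even}, the Hilbert-function computation $h_{\frac e2}-h_{\frac e2-1}=1$, and the statement \eqref{to show} are all correct, and the subcase $d=3$ is indeed Case~1, covered by Proposition~\ref{special case of conj}. But for the remaining and central case ($d$ odd, $d\ge 5$) you have not given a proof: you state that one must show $\codim Y=2$ in $ACI_\ell(d,d,d)$ and then defer, saying you ``would run'' the $n=4$ argument or that the claim is ``subsumed by Theorem~\ref{LI,n=3,d1d2d3}.'' Citing the later theorem is not a proof, and the $n=4$ argument does not transplant verbatim: in Proposition~\ref{LI,n=4} and Theorem~\ref{LI,n=4,d1d2d3d4} the restricted ring $S=R/(\ell)$ has three variables, so the almost complete intersection is linked to a \emph{codimension three} Gorenstein ideal and the count rests on Diesel's parameter spaces $Gor(H)$ and the Conca--Valla dimension formula (Lemma~\ref{dimGorH}), none of which is available when $S=k[\bar x_1,\bar x_2]$. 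For $n=3$ the analogous linkage lands in codimension two (so the linked ideal is itself a complete intersection), and the dimension bookkeeping you call ``the main obstacle'' is exactly what is missing; identifying the obstacle is not the same as overcoming it. A smaller point: your assertion that $\mathcal L_I\neq(\mathbb P^2)^\ast$ forces $\codim\mathcal L_I\le 2$ is a non sequitur (WLP gives $\codim\ge 1$; the bound $\le 2$ comes from the determinantal structure on any nonempty component), and nonemptiness itself must be argued before the degree formula $\binom{h_{e/2}}{2}$ can be quoted.

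For comparison, the paper never runs the liaison count in three variables: Proposition~\ref{LI,n=3} is obtained as the special case $d_1=d_2=d_3=d$ of Theorem~\ref{LI,n=3,d1d2d3}, whose proof is by a different method entirely. One forms the syzygy bundle $\mathcal E=\ker(\oplus_{i=1}^3\mathcal O_{\PP^2}(-d_i)\to\mathcal O_{\PP^2})$, which is $\mu$-stable by Bohnhorst--Spindler; by Brenner--Kaid the non-Lefschetz lines are exactly the jumping lines of $\mathcal E$, and since the normalized bundle has odd first Chern class, Hulek's theorem gives precisely $\binom{c_2(\mathcal E_{norm})}{2}$ jumping lines. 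This yields in one stroke the codimension, the nonemptiness, the exact degree $\binom{h_{e/2}}{2}$, and even that $\mathcal L_I$ consists of distinct points. If you want to complete your route instead, you must actually carry out the two-variable dimension count for the locus $\{\bar I\in ACI_\ell(d,d,d):h_{S/\bar I}(\tfrac e2+1)>0\}$ (for instance via the GCD/Davis-type analysis of Section~\ref{sec:codim2}), showing it has codimension exactly $2$; as it stands, that step is a genuine gap.
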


\begin{proposition} \label{LI,n=4} Let
  $I = \langle F_1, F_2 , F_3 , F_4 \rangle \subset
  R=k[x_1,x_2,x_3,x_4]$
  be a complete intersection generated by general forms of degree
  $(d,d,d,d)$, $d\ge 2$.
  \begin{itemize}
  \item[(1)] If $d=2$ then $\codim \mathcal L_I=3$, and in particular
    $\mathcal L_I\subset (\PP^{3})^\ast$ is a set of $20$ different
    points.
  \item[(2)] If $d\ge 3$ then $\mathcal L_I =\emptyset $.
  \end{itemize}
\end{proposition}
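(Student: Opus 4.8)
The plan is to handle the two cases separately, after first pinning down the relevant Hilbert‑function data. Write $e=4(d-1)$ for the socle degree of $R/I$; it is even, $\frac e2=2d-2$, and by Remark~\ref{e even} this is the only degree we must examine. From $\sum_j(h_j-h_{j-1})t^j=\dfrac{(1-t^d)^4}{(1-t)^3}$ one reads off that the coefficient of $t^{2d-2}$ is $\binom{2d}{2}-4\binom d2=d$ and that of $t^{2d-1}$ is $\binom{2d+1}{2}-4\binom{d+1}{2}=-d$; in particular $h_{\frac e2}-h_{\frac e2-1}=d$. Thus Conjecture~\ref{conj} predicts $\codim\mathcal L_I=\min\{d+1,4\}$: codimension $3$ and degree $\binom{h_{e/2}}{h_{e/2}-h_{e/2-1}+1}=\binom63=20$ when $d=2$, and codimension $4$ — which in $(\PP^3)^\ast$ forces $\mathcal L_I=\emptyset$ — when $d\ge3$. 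It remains to establish these two assertions.

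For $d=2$ we have $d_n=2=d_1+d_2+d_3-n$, so Proposition~\ref{special case of conj} applies: Conjecture~\ref{conj} holds for $I$, whence $\mathcal L_I$ is a zero‑dimensional subscheme of $(\PP^3)^\ast$ of degree $20$. It therefore suffices to show that its $20$ points are \emph{distinct}. I would argue directly: since $\frac e2-1=1$ and $I_1=0$, a form $[\ell]$ lies in $\mathcal L_I$ precisely when $\ell L\in I_2$ for some nonzero linear $L$, i.e.\ when $\ell$ is a linear factor of a reducible quadric of $I_2$. For general $I$ the plane $\PP(I_2)\subset\PP([R]_2)\cong\PP^9$ is general, and the locus $\Sigma$ of reducible quadrics (symmetric $4\times4$ matrices of rank $\le2$) is irreducible of codimension $3$ and degree $10$, with rank‑$1$ locus of codimension $6$; hence $\PP(I_2)$ meets $\Sigma$ transversally in exactly $10$ reduced points, all of rank $2$, say $\ell_1L_1,\dots,\ell_{10}L_{10}$ with $\ell_j,L_j$ independent, and for general $I$ the $20$ linear forms $\ell_1,L_1,\dots,\ell_{10},L_{10}$ are pairwise distinct (a coincidence would force $\ell\cdot[R]_1$ to meet $I_2$ in dimension $\ge2$ for some $\ell$, a locus of positive codimension in $\PP([R]_2)$). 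As each of these $20$ forms lies in the support of the length‑$20$ scheme $\mathcal L_I$, that scheme consists of exactly these $20$ reduced points.

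For $d\ge3$ we have $d_n<d_1+d_2+d_3-n$, so we are in Case~2 of the discussion preceding Proposition~\ref{special case of conj}. By the reduction carried out there — using $\mathrm{GL}_4$‑equivariance of the non‑Lefschetz condition and equidimensionality of $\phi_1$ — it suffices to prove the inequality in \eqref{to show}, namely $\codim Y\ge4$, where $Y=\{\bar I\in V:h_{S/\bar I}(2d-1)>0\}\subset ACI_\ell(d,d,d,d)$ and $S=R/(\ell)\cong k[y_1,y_2,y_3]$. I would reformulate membership in $Y$ via Macaulay inverse systems: $h_{S/\bar I}(2d-1)>0$ holds iff the four degree‑$d$ generators $\bar F_1,\dots,\bar F_4$ of $\bar I$ all lie in $[\operatorname{Ann}\Lambda]_d$ for some nonzero form $\Lambda$ of degree $2d-1$ in the dual variables, $\operatorname{Ann}\Lambda\subset k[y_1,y_2,y_3]$ being the apolar ideal of the artinian Gorenstein algebra $G_\Lambda$ of socle degree $2d-1$. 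I would then bound $\dim Y$ through the incidence variety
\[
\widetilde Y=\bigl\{(\bar I,[\Lambda])\ :\ \bar F_1,\dots,\bar F_4\in[\operatorname{Ann}\Lambda]_d\bigr\},
\]
which dominates $Y$. Over a general $\Lambda$, $G_\Lambda$ is the general codimension‑$\le3$ Gorenstein algebra of socle degree $2d-1$, so its catalecticant has maximal rank, giving $h_{G_\Lambda}(d)=\binom{d+1}{2}$ and $\dim[\operatorname{Ann}\Lambda]_d=\binom{d+2}{2}-\binom{d+1}{2}=d+1$; thus the general fibre of $\widetilde Y$ over the space of forms $\Lambda$ (of dimension $\binom{2d+1}{2}-1$) has dimension $4(d+1)$. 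Comparing $\dim\widetilde Y\le\binom{2d+1}{2}-1+4(d+1)$ with the dimension $4\binom{d+2}{2}$ of the space of $4$‑tuples of degree‑$d$ forms in $S$ yields $\codim Y\ge d+1\ge4$, as required. The delicate point — the main obstacle — is to justify that $\dim\widetilde Y$ is governed by the stratum over a general $\Lambda$: the strata over special $\Lambda$, those with $h_{G_\Lambda}(d)<\binom{d+1}{2}$, have larger fibres, and one must verify that imposing such a drop in the catalecticant rank costs correspondingly many parameters in the space of $\Lambda$'s, so that no such stratum produces a component of $\widetilde Y$ of dimension exceeding the one computed above. This is a statement about the geometry of generic codimension‑three Gorenstein algebras, and is where the real work of the proof lies.
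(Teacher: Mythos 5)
Your Hilbert-function computation and your treatment of $d=2$ are fine and essentially coincide with the paper's: the paper also identifies $\mathcal L_I$ for $d=2$ with the (doubled) intersection of the general $\PP^3=\PP([I]_2)$ with the degree-$10$, codimension-$3$ locus of reducible quadrics in $\PP^9$; your generating-function computation of $h_{\frac e2}-h_{\frac e2-1}=d$ replaces the paper's syzygy-bundle/Riemann--Roch argument (or Lemma~\ref{HF,n=4,d1d2d3d4}) harmlessly, and the extra appeal to Proposition~\ref{special case of conj} for the scheme-theoretic degree $20$ is legitimate.

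The case $d\ge 3$, however, is not proved. Your inverse-systems reformulation of $Y$ is correct and the incidence variety $\widetilde Y$ is a reasonable device, but the inequality $\dim\widetilde Y\le\binom{2d+1}{2}-1+4(d+1)$ is exactly the assertion at stake, not a consequence of the set-up: over the locus of $\Lambda$ whose middle catalecticant drops rank by $r\ge 1$, the fibre dimension jumps to $4(d+1+r)$, so to control $\dim Y$ you need, for every SI-sequence $H'$ of socle degree $2d-1$ with $H'(d)=\binom{d+1}{2}-r$, that the stratum $\{[\Lambda]\colon h_{G_\Lambda}=H'\}$ inside the $\bigl(\binom{2d+1}{2}-1\bigr)$-dimensional space of forms $\Lambda$ has codimension at least $4r-(d-3)$ (and at least $4r$ if you want the stated upper bound on $\dim\widetilde Y$ itself). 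You explicitly defer this step (``where the real work of the proof lies''), so the proposal establishes the count only over the open stratum and leaves a genuine gap precisely at the crux. Such stratum bounds could in principle be assembled from the dimension formulas for $Gor(H)$ in codimension three (\cite{Diesel}, \cite{CV}), but verifying them uniformly over all $H'$ is real additional work. The paper circumvents the stratification entirely: by semicontinuity (Remark~\ref{by one}) it suffices to bound the family $\mathcal B_{d,d,d,d}$ of restricted ideals whose Hilbert function fails by exactly one; a general member is linked, via a complete intersection of type $(d,d,d)$, to a codimension-three Gorenstein ideal with one prescribed $h$-vector, whose family dimension is given by \cite[Example 5.2]{CV}, and the resulting single comparison yields codimension $d+1>3$, hence $\mathcal L_I=\emptyset$ by \eqref{to show}. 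If you want to pursue your route, the missing ingredient is exactly a uniform lower bound on the codimension of the catalecticant rank-drop loci; without it the argument does not go through.
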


\begin{proof} (1) The $h$-vector of $R/I$ is $(1,4,6,4,1)$ and
  $\mathcal L_I$ is a scheme defined by the maximal minors of a
  $4\times 6$ matrix with linear entries. We will prove that
  $\mathcal L_I \subset (\mathbb P^3)^\ast$ has codimension 3 and
  consists of 20 different points which shows that $\mathcal L_I$ is a
  standard determinantal scheme.  If $\ell $ fails to give an
  injection from degree~1 to degree 2, then there is a linear form $M$
  such that $\ell M \in I$.  So we first want to know how many
  reducible quadrics lie in the projectivization of the 4-dimensional
  vector space generated by $F_1,F_2,F_3,F_4$ inside
  $\mathbb P [R]_2 = \mathbb P^9$.  The dimension of the space of such
  reducible quadrics is 6, and its degree is 10 (\cite{harris}, top of
  page 300). Thus its intersection with a general 3-dimensional linear
  space in $\mathbb P^9$ is a set of 10 points in $\mathbb P^9$.  Such
  a linear space avoids the locus of double planes, and each of the 10
  points is of the form $\ell _1 \ell _2$ where either $\ell _1$ or
  $\ell _2$ could play the role of $\ell $ for us.  Thus there are 20
  such linear forms, or 20 points in $(\mathbb P^3)^\ast$.
  
  \medskip
  
  \noindent (2) Let $(1,h_1,h_2, \cdots ,h_{e-1},h_e)$ be the
  $h$-vector of $R/I$. Therefore, $e=4d-4$.

  \medskip
  
  \noindent \underline{Claim:} $h_{\frac{e}{2}}- h_{\frac{e}{2}-1}=d.$
  
  \medskip
We will prove a more general result in Lemma~\ref{HF,n=4,d1d2d3d4}, but here we give a completely different proof to illustrate a different approach.

\medskip
  
  \noindent \underline{Proof of the Claim:} We consider the rank 3
  vector bundle ${\mathcal E}$ on $\PP^3$
  \[
  {\mathcal E}:= \ker({\mathcal O}_{\PP^3}(-d)^4
  \stackrel{(F_1,F_2,F_3,F_4)}{\longrightarrow} {\mathcal
    O}_{\PP^3}).\] Using the exact sequences
  \[
  0 \longrightarrow {\mathcal E} \longrightarrow {\mathcal
    O}_{\PP^3}(-d)^4 \longrightarrow {\mathcal
    O}_{\PP^3}\longrightarrow 0, \text{ and}\]
  \[ 0 \longrightarrow {\mathcal O}_{\PP^3}(-4d) \longrightarrow
  {\mathcal O}_{\PP^3}(-3d)^4\longrightarrow {\mathcal
    O}_{\PP^3}(-2d)^6 \longrightarrow {\mathcal E}\longrightarrow 0,\]
  we get
  \[\begin{array}{lll} H^0(\PP^3, {\mathcal E}(t))=0 & \text{ for
                                                       all } & t<2d
      \\ H^2(\PP^3,   {\mathcal E}(t))=0 & \text{ for all } & t\in
                                                              \mathbb Z\\
      H^3(\PP^3,   {\mathcal E}(t))=0  & \text{ for all } & t\ge d-3. \end{array}\]
    Therefore, we have
    \[
    \begin{array}{lll}
      h_{\frac{e}{2}}- h_{\frac{e}{2}-1} & = &
                                               h^1(\PP^3,   {\mathcal E}(2d-2) )
                                               -h^1(\PP^3, {\mathcal E}(2d-3)) \\
                                         & = & -\chi({\mathcal
                                               E}(2d-2))+
                                               \chi({\mathcal
                                               E}(2d-3)) =d 
    \end{array}
    \]
    
    \noindent where the last equality follows applying the
    Riemann-Roch Theorem, and the Claim is proved.
    
    Since $h_{\frac{e}{2}}- h_{\frac{e}{2}-1}=d$, $\mathcal L_I$ is
    expected to be empty and this is what we will prove.  To this end,
    we set
    $S=k[x_1,x_2,x_3,x_4]/(\ell ) \cong
    k[\overline{x_1},\overline{x_2},\overline{x_3}]$
    where $\ell =a_1x_1+a_2x_2+a_3x_3+a_4x_4\in [R]_1$ is a linear
    form.  Call ${\mathcal A}_{d,d,d,d}$ the set of almost complete
    intersection ideals $J\subset S$ of type $(d,d,d,d)$. It holds
    that
    \[
    \dim {\mathcal A}_{d,d,d,d}=\dim Gr \left (4, \binom{d+2}{2}
    \right) )= 4\binom{d+2}{2}-16=2d^2+6d-12.
    \]
    Denote by ${\mathcal B}_{d,d,d,d}$ the set of almost complete
    intersection ideals $J\subset S$ of type $(d,d,d,d)$ and
    $h$-vector
    $(1, h_1-1,h_2-h_1, \cdots , h_{\frac{e}{2}-1}-h_{\frac{e}{2}-2},
    h_{\frac{e}{2}}-h_{\frac{e}{2}-1}+1=d+1, 1)$.
    A general ideal $J$ in ${\mathcal B}_{d,d,d,d}$ can be linked by
    means of a complete intersection $J'$ of type $(d,d,d)$ to a
    Gorenstein ideal $J_1$ with socle degree $2d-3$ and $h$-vector
    \[(1,3,6,\cdots ,\binom{d-1}{2}, \binom{d}{2}-1, \binom{d}{2}-1,
    \binom{d-1}{2},\cdots ,6,3,1).\] Observe that
    \begin{itemize}
    \item[(i)] the dimension of the Gorenstein ideals $J_1$ with
      $h$-vector
      \[
      \left (1,3,6,\cdots ,\binom{d-1}{2}, \binom{d}{2}-1,
        \binom{d}{2}-1, \binom{d-1}{2},\cdots ,6,3,1 \right )
      \]
      is $\binom{2d-1}{2}-d-2 = 2d^2 -4d-1$
      (see    \cite[Example 5.2]{CV}), \\
    \item[(ii)] the dimension of complete intersections $J'$ of type
      $(d,d,d)$ contained in $J_1$ is
      $\dim Gr(3,3d)=3(3d-3)$  (note that $\dim[J_1]_d = \binom{d+2}{2} - \binom{d-1}{2} = 3d$), and \\
    \item[(iii)] the dimension of complete intersections of type
      $(d,d,d)$ contained in $J$ is $\dim Gr(3,4)=3$.
    \end{itemize}
    \medskip To compute $\dim \mathcal B_{d,d,d,d}$ we use liaison.
    The computation is
    \[
    \dim {\mathcal B}_{d,d,d,d}= \left ( \binom{2d-1}{2}-d-2 \right
    )+(9d-9)-3= 2d^2+5d-13.
    \]
    We have only to justify subtracting the value from (iii) in this
    computation.  Indeed, this is to remove over-counting, since the
    same ideal $J$ can be reached from many different ideals $J_1$
    using different complete intersections in $J$.  Now subtracting,
    we see that the difference of the dimensions is
    \[
    (2d^2 + 6d - 12) - (2d^2 + 5d - 13) = d +1 = h_{\frac{e}{2}} -
    h_{\frac{e}{2}-1} + 1.
    \]
    Since this is $> n-1$ for $d \geq 3$, the locus is empty according
    to (\ref{to show}).
  \end{proof}

  \begin{theorem} \label{LI,n=3,d1d2d3} Let
    $I = \langle F_1, F_2 , F_3 \rangle \subset R = k[x_1,x_2,x_3]$ be
    a complete intersection artinian ideal generated by general forms
    of degree $(d_1,d_2,d_3)$. Assume that $d_1\le d_2 \le d_3$. Let
    $e$ be the socle degree of $R/I$ and let
    $(1,h_1,\cdots , h_{e-1},h_e)$ be the $h$-vector of $R/I$.  Then
    \begin{itemize}
    \item[(1)] If $e$ is odd then $\codim \mathcal L_I =1$ and
      $\mathcal L_I\subset (\PP^{2})^\ast$ is a plane curve of degree
      \[
      \begin{cases} d_1d_2 & \text{ if }  d_3\ge d_1+d_2 \\ \\
        \displaystyle d_1d_2-\frac{(d_1+d_2-d_3)^2}{4} =
        \frac{2d_1d_2+2d_1d_3+2d_2d_3 -d_1^2-d_2^2-d_3^2}{4} & \text{
          if } d_3< d_1+d_2.
      \end{cases}
      \]\item[(2)]
      If $e$ is even then
      \[
      \codim \mathcal L_I =h_{\frac{e}{2}}-h_{\frac{e}{2} -1}+1=
      \begin{cases}
        1 \text{ if } d_3 \ge d_1+d_2+1,  \\
        2 \text{ if } d_3\le d_1+d_2-1.
      \end{cases}
      \]
      Moreover, if $d_3\ge d_1+d_2+1$ then
      $\mathcal L_I\subset (\PP^{2})^\ast$ is a plane curve of degree
      $d_1d_2$; and if $d_3\le d_1+d_2-1$ then
      $\mathcal L_I\subset (\PP^{2})^\ast$ is a finite set of
      $\binom{n_I}{2}$ points, where
      \[
      n_I = \frac{2d_1d_2+2d_1d_3+2d_2d_3 +1 -d_1^2-d_2^2-d_3^2}{4}
      \]
    \end{itemize}
  \end{theorem}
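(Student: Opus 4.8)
The plan is to reduce the whole statement to two ingredients: the value of the Hilbert function of $R/I$ in the middle, and the fact --- true for \emph{any} codimension-three complete intersection, hence for a general one --- that $R/I$ has the WLP. By Corollary~\ref{Scheme-theoretic}, $\mathcal L_I$ is cut out scheme-theoretically by the maximal minors of the matrix $M$ of linear forms in $k[a_1,a_2,a_3]$ representing $\times\ell\colon[R/I]_{\lfloor(e-1)/2\rfloor}\to[R/I]_{\lfloor(e+1)/2\rfloor}$; its format is governed by the two central values of the $h$-vector, which coincide when $e$ is odd and otherwise differ by $\delta:=h_{e/2}-h_{e/2-1}$.

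To compute the central values I would reduce modulo a general linear form $\ell_0$, which is legitimate precisely because $R/I$ has the WLP: then $R/(I,\ell_0)=k[y_1,y_2]/(\overline F_1,\overline F_2,\overline F_3)$ has Hilbert function $\Delta h_j$ for $j\le\lfloor e/2\rfloor$ and $0$ afterwards, so in particular $h_{\lfloor e/2\rfloor}=\dim_k k[y_1,y_2]/(\overline F_1,\overline F_2,\overline F_3)$ and, for even $e$, $\delta=\dim_k[R/(I,\ell_0)]_{e/2}$. Now $B:=k[y_1,y_2]/(\overline F_1,\overline F_2)$ is a complete intersection of type $(d_1,d_2)$ in two variables, hence has the SLP, so a general form $\overline F_3$ of degree $d_3$ acts on $B$ with maximal rank in every degree; and the Hilbert function of $B$ is the symmetric unimodal sequence $1,2,\dots,d_1,d_1,\dots,d_1,\dots,2,1$. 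A short computation (using that $e/2$ lies past the peak of $h_B$) then shows that $\delta$ equals the first difference of this sequence evaluated in degree $a:=\tfrac{d_1+d_2-1-d_3}{2}$. The inequality $d_3\ge d_2-d_1+1$, automatic from $d_1\le d_2\le d_3$, forces $a\le d_1-1$, so that $a$ lies in the strictly increasing range of $h_B$ whenever $a\ge 0$; hence $\delta=1$ if $d_3\le d_1+d_2-1$ and $\delta=0$ (as $a<0$) if $d_3\ge d_1+d_2+1$, matching the prediction $\min\{\delta+1,3\}$ for $\codim\mathcal L_I$ of Conjecture~\ref{conj} (the case $e$ odd having been dealt with in Remark~\ref{e even}). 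Summing then gives the central value explicitly: it is $d_1d_2$ when $d_3\ge d_1+d_2$ (then $h_B$ vanishes above degree $d_3$), while for $d_3\le d_1+d_2$ one gets $d_1d_2-\sum_i\min\bigl(h_B(i),h_B(i+d_3)\bigr)=d_1d_2-\bigl\lfloor\tfrac{(d_1+d_2-d_3)^2}{4}\bigr\rfloor$, which is the claimed curve degree $d_1d_2-\tfrac{(d_1+d_2-d_3)^2}{4}$ when $e$ is odd and equals $n_I=d_1d_2-\tfrac{(d_1+d_2-d_3)^2-1}{4}$ when $e$ is even.

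Whenever $e$ is odd, or $e$ is even with $d_3\ge d_1+d_2+1$, the two central values coincide, $M$ is a \emph{square} matrix, and its determinant is a nonzero form --- precisely because a general complete intersection has the WLP, so some linear form gives maximal rank. Hence $\mathcal L_I=\{\det M=0\}$ is a plane curve of degree equal to the size of $M$, namely the value computed above. This establishes part~(1) and the first half of part~(2).

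The remaining case --- $e$ even and $d_3\le d_1+d_2-1$ --- is the crux: here $M$ is $h_{e/2}\times(h_{e/2}-1)$ and $\mathcal L_I$ has expected codimension $2$. One must show that for a general $I$ the \emph{actual} codimension is $2$, i.e.\ that $\mathcal L_I$ is finite; granting this, Lemma~\ref{expected} immediately gives $\deg\mathcal L_I=\binom{h_{e/2}}{2}=\binom{n_I}{2}$, and a generic-smoothness argument applied to the incidence family shows that for general $I$ the scheme $\mathcal L_I$ is reduced, hence a set of $\binom{n_I}{2}$ distinct points (consistent, in the extreme instance $d_3=d_1+d_2-1$, with the description of $\mathcal L_I$ as the finitely many secant lines to the complete intersection set of points $\{F_1=F_2=0\}$ given in Remark~\ref{large dn}(2)). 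I expect the finiteness to be the main obstacle. The natural route is to form the incidence variety $X\subseteq(\PP^2)^\ast\times CI(d_1,d_2,d_3)$ of pairs $(\ell,I)$ for which $\times\ell\colon[R/I]_{e/2-1}\to[R/I]_{e/2}$ is not injective; since all fibres of the first projection have equal dimension (by a change of coordinates), it suffices to fix one $\ell$ and prove that the locus of $I\in CI(d_1,d_2,d_3)$ whose restricted ideal $(\overline F_1,\overline F_2,\overline F_3)\subset k[y_1,y_2]$ has Hilbert function exceeding the generic one in degree $e/2$ has codimension exactly $2$. In two variables this jump locus is completely explicit --- governed by the roots of the binary forms $\overline F_i$ and the syzygies among them --- and the required dimension count can be organized by liaison, linking $(\overline F_1,\overline F_2,\overline F_3)$ through the complete intersection $(\overline F_1,\overline F_2)$ of type $(d_1,d_2)$ to a family of lower-degree complete intersections, exactly in the spirit of the computation in the proof of Proposition~\ref{LI,n=4}. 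The genericity of the $F_i$ is used precisely at this step, which is the $n=3$ instance of the phenomenon predicted by Conjecture~\ref{conj}.
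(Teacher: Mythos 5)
Your reduction of the easy cases is sound: computing the two central values of the $h$-vector by restricting to $k[y_1,y_2]$ (legitimate since $R/I$ has the WLP, and using that multiplication by the general binary form $\overline F_3$ on the complete intersection $(\overline F_1,\overline F_2)$ has maximal rank in every degree) is a correct and more elementary substitute for the paper's syzygy-bundle/Riemann--Roch computation of $h_{e/2}-h_{e/2-1}$, and in the square-matrix cases ($e$ odd, or $e$ even with $d_3\ge d_1+d_2+1$) the nonvanishing of the determinant by WLP does give a plane curve of the stated degree, matching the paper's treatment via Remark~\ref{e even} and Remark~\ref{large dn}.

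The problem is the case you yourself call the crux, $e$ even and $d_3\le d_1+d_2-1$: there your proposal is a plan, not a proof. The finiteness of $\mathcal L_I$ (codimension exactly $2$ for general $I$) is reduced, via the incidence variety, to a dimension count for the jump locus inside $ACI_\ell(d_1,d_2,d_3)$ in two variables, but that count is never carried out --- you would need the analogue of the $\mathcal B_{d,d,d,d}$ computation in Proposition~\ref{LI,n=4}: link the restricted almost complete intersection through $(\overline F_1,\overline F_2)$ to a codimension-two Gorenstein (hence complete intersection) ideal, compute the dimension of the family of such linked ideals with the jumped $h$-vector, the dimension of complete intersections of type $(d_1,d_2)$ contained in them, correct for over-counting, and verify that the resulting codimension is exactly $2$ (both inequalities are needed: $\ge 2$ for finiteness and $\le 2$ for nonemptiness, without which the count $\binom{n_I}{2}$ is vacuous). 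Moreover, even granting codimension $2$, Lemma~\ref{expected} only yields $\deg\mathcal L_I=\binom{h_{e/2}}{2}$ as a determinantal scheme; the claim that $\mathcal L_I$ is a set of $\binom{n_I}{2}$ \emph{distinct} points rests on your unspecified ``generic-smoothness argument applied to the incidence family,'' which would require showing the incidence scheme is generically reduced along the relevant component and is not justified as stated. The paper settles exactly these two points by a different tool: the syzygy bundle $\mathcal E$ is $\mu$-stable by Bohnhorst--Spindler, non-Lefschetz elements are precisely its jumping lines by Brenner--Kaid, and since the normalized bundle has odd first Chern class $c_1=-1$, Hulek's theorem gives that there are exactly $\binom{c_2}{2}$ jumping lines --- finiteness, the exact count, and distinctness in one stroke. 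Your liaison route may well be viable (it is how the paper handles $n=4$), but as written the heart of part (2) is left unproved.
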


  \begin{proof} It is well known that $I$ has WLP and hence
    $\codim \mathcal L_I\ge 1$.

    (1) If $d_3\ge d_1+d_2$ arguing as in Remark \ref{large dn} we see
    that $J=\langle F_1,F_2\rangle$ is the ideal of a set of $d_1d_2$
    different points in $\PP^2$,
    $h_{\frac{e-1}{2}}=h_{\frac{e+1}{2}}=d_1d_2$ and
    \[
    \times \ell : (R/I)_{\frac{e-1}{2}} \longrightarrow
    (R/I)_{\frac{e+1}{2}}
    \]
    with $\ell =ax+by+cz$ fails to be injective if and only if $\ell $
    passes through one of the $d_1d_2$ points defined by
    $J$. Therefore, $\codim (\mathcal L_I)=1$ and
    $\deg(\mathcal L_I)=d_1d_2$.

    Assume $d_3<d_1+d_2$.  In this case we consider the syzygy bundle
    associated to $I$, i.e. the rank 2 vector bundle ${\mathcal E}$ on
    $\PP^2$ defined by
    \[{\mathcal E} := \ker(\oplus _{i=1}^3 {\mathcal O}_{\PP^2}(-d_i)
    \stackrel{(F_1,F_2,F_3)}{\longrightarrow} {\mathcal O}_{\PP^2}).
    \]
    By \cite[Corollary 2.7]{bs}, ${\mathcal E}$ is $\mu $-stable.  By
    \cite[Theorem 2.2]{BK}, the linear form $\ell =ax+by+cz$ fails to
    be a Lefschetz element of $I$ if and only if $\ell =0$ is a
    jumping line of ${\mathcal E}$ if and only if
    ${\mathcal E}_{|\ell } \cong {\mathcal O}_{\ell }(a^1_{\ell
    })\oplus {\mathcal O}_{\ell }(a^2_{\ell })$
    with $|a_{\ell }^1-a_{\ell }^2|\ge 2$.  Since the first Chern
    class $c_1({\mathcal E}(\frac{d_1+d_2+d_3}{2}))=0$, we can apply
    \cite[Theorem 2.2.3]{OSS}, and we get that the set
    $J_{\mathcal E}$ of jumping lines of ${\mathcal E}$ is a curve of
    degree $c_2( {\mathcal E}(\frac{d_1+d_2+d_3}{2}))$ in
    $(\PP^2)^\ast$. Therefore, the non-Lefschetz locus $\mathcal L_I$
    of $I$ is a plane curve of degree
    \begin{multline*}
      c_2 \left ( {\mathcal E}(\frac{d_1+d_2+d_3}{2}) \right ) =
      \frac{(d_1+d_2-d_3)(d_1-d_2+d_3)}4+\frac{(d_1+d_2-d_3)(-d_1+d_2+d_3)}4\\
      +\frac{ (d_1-d_2+d_3)(-d_1+d_2+d_3)}{4}=
      \frac{2d_1d_2+2d_1d_3+2d_2d_3 -d_1^2-d_2^2-d_3^2}{4}.
    \end{multline*}
    \vskip 2mm

    (2) If $d_3\ge d_1+d_2+1$ the result follows from Remark
    \ref{large dn}.  So, let us assume that $d_3\le d_1+d_2-1$.  Let
    $(1,h_1,h_2, \cdots ,h_{e-1},h_e)$ be the $h$-vector of $R/I$.

  \noindent\underline{Claim:} $h_{\frac{e}{2}}- h_{\frac{e}{2}-1}=1.$
  \medskip To prove the claim, we consider the rank $2$ vector bundle
  ${\mathcal E}$ on $\PP^2$
  \[ {\mathcal E} := \ker \left ( \oplus _{i=1}^3 {\mathcal
      O}_{\PP^2}(-d_i) \stackrel{(F_1,F_2,F_3)}{\longrightarrow}
    {\mathcal O}_{\PP^2}\right ).
  \]
  By \cite[Corollary 2.7]{bs}, ${\mathcal E}$ is $\mu $-stable.  Using
  the fact that $ {\mathcal E}$ is a $\mu $-stable rank 2 vector
  bundle on $\PP^2$, $c_1({\mathcal E})=-d_1-d_2-d_3$ and
  ${\mathcal E}_{norm}={\mathcal E}(\frac{d_1+d_2+d_3-1}{2})$, we get
  \[H^0(\PP^2, {\mathcal E}(h_{\frac{e}{2}})=H^2(\PP^2, {\mathcal
    E}(h_{\frac{e}{2}})) =H^0(\PP^2, {\mathcal E}(h_{\frac{e}{2}-1}))=
  H^2(\PP^2, {\mathcal E}(h_{\frac{e}{2}-1}))=0.\] Therefore, we have
  \[
  \begin{array}{lll}
    h_{\frac{e}{2}}- h_{\frac{e}{2}-1} 
    & = &
          h^1(\PP^2,   {\mathcal E}(h_{\frac{e}{2}}) )
          -h^1(\PP^2, {\mathcal E}(h_{\frac{e}{2}-1})) \\
    & = & -\chi({\mathcal E}(h_{\frac{e}{2}}))+ \chi({\mathcal E}(h_{\frac{e}{2}-1})) =1 \end{array}
  \]
  \noindent where the last equality follows applying the Riemann-Roch
  Theorem, and the claim is proved.
  
  Thanks to the claim, the expected codimension of
  $\mathcal L_I\subset (\PP^2)^\ast$ is two and, in fact, we are going
  to prove that $\mathcal L_I\subset (\PP^2)^\ast$ is a set of
  $\binom{n_I}{2}$,
  $n_I:=\frac{2d_1d_2+2d_1d_3+2d_2d_3+1-d_1^2-d_2^2-d_3^2}{4}$,
  different points. To this end, we consider the rank 2 vector bundle
  ${\mathcal E}$ on $\PP^2$
  \[ {\mathcal E} := \ker \left ( \oplus _{i=1}^3 {\mathcal
      O}_{\PP^2}(-d_i) \stackrel{(F_1,F_2,F_3)}{\longrightarrow}
    {\mathcal O}_{\PP^2}\right ).
  \]
  By \cite[Corollary 2.7]{bs}, ${\mathcal E}$ is $\mu $-stable.  By
  \cite[Theorem 2.2]{BK}, the linear form $\ell =ax+by+cz$ fails to be
  a Lefschetz element of $I$ if and only if $\ell =0$ is a jumping
  line of ${\mathcal E}$ if and only if
  ${\mathcal E}_{|\ell } \cong {\mathcal O}_{\ell }(a^1_{\ell })\oplus
  {\mathcal O}_{\ell }(a^2_{\ell })$
  with $|a_{\ell }^1-a_{\ell }^2|\ge 2$.  Since the first Chern class
  $c_1({\mathcal E}(\frac{d_1+d_2+d_3-1}{2}))=-1$, we can apply
  \cite[Corollary 10.7.1]{hulek}, and we get that ${\mathcal E}$ has
  exactly $\binom{c_2({\mathcal E}(\frac{d_1+d_2+d_3-1}{2})) }{2}$
  jumping lines.  Let us compute
  $c_2({\mathcal E}(\frac{d_1+d_2+d_3-1}{2}))$.  From the exact
  sequence
  \[
  0 \longrightarrow {\mathcal E} \longrightarrow \oplus _{i=1}^3
  {\mathcal O}_{\PP^2}(-d_i) \longrightarrow {\mathcal O}_{\PP^2}
  \longrightarrow 0
  \]
  we get that $c_1({\mathcal E})=-d_1-d_2-d_3$ and
  $c_2({\mathcal E})=d_1d_2+d_1d_3+d_2d_3$.  Since
  \[
  c_2 \left ( {\mathcal E} \left ( \frac{d_1+d_2+d_3-1}{2} \right )
  \right ) = c_2({\mathcal E})+c_1 ({\mathcal E}) \left (
    \frac{d_1+d_2+d_3-1}{2} \right )+ \left ( \frac{d_1+d_2+d_3-1}{2}
  \right )^2,
  \]
  we have
  \[
  c_2 \left ({\mathcal E} \left ( \frac{d_1+d_2+d_3-1}{2} \right )
  \right )= \frac{2d_1d_2+2d_1d_3+2d_2d_3+1-d_1^2-d_2^2-d_3^2}{4}
  \]
  and we conclude that the set $J_{\mathcal E}$ of jumping lines of
  ${\mathcal E}$ is a set of $\binom{n_I}{2}$ points in
  $(\PP^2)^\ast$, where
  \[
  n_I:=\frac{2d_1d_2+2d_1d_3+2d_2d_3+1-d_1^2-d_2^2-d_3^2}{4},
  \]
  which proves what we want.
\end{proof}

Our next goal is to prove Conjecture \ref{conj} for $n=4$. To this end
the following lemmas will be very useful.

\begin{lemma}\label{HF,n=4,d1d2d3d4}
  Let
  $I = \langle F_1, F_2 , F_3 ,F_4 \rangle \subset R =
  k[x_1,x_2,x_3,x_4]$
  be a complete intersection artinian ideal generated by general forms
  of degree $(d_1,d_2,d_3,d_4)$. Assume that
  $d_1\le d_2 \le d_3\le d_4$.  Let $e=d_1+d_2+d_3+d_4-4$ be the socle
  degree of $R/I$ and let $(1,h_1,\cdots , h_{e-1},h_e)$ be the
  $h$-vector of $R/I$.  Then
  \begin{enumerate}
  \item If $e$ is odd then
    \[
    h_{\frac{e-1}{2}}= h_{\frac{e+1}{2}}= d_1d_2d_3 -
    \frac14\binom{d_1+d_2+d_3-d_4+1}{3}+\frac14\binom{-d_1+d_2+d_3-d_4+1}{3}
    \]
  \item If $e$ is even then
    \[
    h_{\frac{e}{2}}-h_{\frac{e}{2}-1}= \begin{cases} 0 & \text{ if }
      d_4\ge d_1+d_2+d_3 \\ \frac{d_1+d_2+d_3-d_4}{2} & \text{ if }
      -d_1+d_2+d_3\le d_4\le d_1+d_2+d_3 \\ d_1 & \text{ if } d_4\le
      -d_1+d_2+d_3 .\end{cases}
    \]
  \end{enumerate}
\end{lemma}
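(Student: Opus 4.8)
The plan is to exploit that the Hilbert function of a complete intersection depends only on the degrees $d_1,\dots,d_4$ (the $F_i$ form a regular sequence), so both formulas are elementary identities; I would extract them by passing through the three-variable complete intersection rather than expanding $\prod_{i=1}^4\frac{1-t^{d_i}}{1-t}$ head-on. First I would set $B=R/(F_1,F_2,F_3)$, the one-dimensional Cohen--Macaulay coordinate ring of the $d_1d_2d_3$ general complete intersection points in $\PP^3$ cut out by $F_1,F_2,F_3$. Since $F_4$ is a nonzerodivisor on $B$, we have $h_{R/I}(m)=h_B(m)-h_B(m-d_4)$, hence
\[
\Delta h_{R/I}(m):=h_{R/I}(m)-h_{R/I}(m-1)=g(m)-g(m-d_4),\qquad g:=\Delta h_B .
\]
Because a general linear form is a nonzerodivisor on $B$, the function $g$ is exactly the Hilbert function of an artinian complete intersection of type $(d_1,d_2,d_3)$ in $k[x,y,z]$: it is supported on $[0,e']$ with $e'=d_1+d_2+d_3-3$, symmetric about $e'/2$, satisfies $\sum_j g(j)=d_1d_2d_3$, and equals $g(j)=\sum_{S\subseteq\{1,2,3\}}(-1)^{|S|}\binom{j-d_S+2}{2}$ where $d_S:=\sum_{i\in S}d_i$ and $\binom{a}{2}:=0$ for $a<2$; likewise $h_B(m)=\sum_{S\subseteq\{1,2,3\}}(-1)^{|S|}\binom{m-d_S+3}{3}$.

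For part (2), $e=e'+d_4-1$ is even and I would work at the peak degree $m=e/2$. Combining the displayed relation with the symmetry of $g$ gives
\[
h_{\frac{e}{2}}-h_{\frac{e}{2}-1}=g\!\left(\tfrac{e}{2}\right)-g\!\left(\tfrac{e}{2}-d_4\right)=g\!\left(\tfrac{e'-d_4+1}{2}\right)-g\!\left(\tfrac{e'-d_4-1}{2}\right)=\Delta g(j),\qquad j:=\tfrac{e'-d_4+1}{2},
\]
and $\Delta g(j)=\sum_{S\subseteq\{1,2,3\}}(-1)^{|S|}\phi(j-d_S)$ with $\phi(a)=a+1$ for $a\ge 0$ and $\phi(a)=0$ otherwise. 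Using $d_1\le d_2\le d_3\le d_4$ one checks that $j-d_2,\,j-d_3<0$ and $j-d_S<0$ for every $S$ with $|S|\ge 2$, so the sum collapses to $\phi(j)-\phi(j-d_1)$; unwinding $\phi$ — noting $j\ge 0\iff d_4\le d_1+d_2+d_3-2$ and $j-d_1\ge 0\iff d_4\le -d_1+d_2+d_3-2$, with the parity of $e$ excluding the intermediate values of $d_4$ — gives precisely the three-case formula of (2).

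For part (1), $e$ is odd, so $h_{\frac{e-1}{2}}=h_{\frac{e+1}{2}}$ by the Gorenstein symmetry of $R/I$, and it remains to compute the value. Telescoping $\Delta h_{R/I}$ gives $h_{R/I}(m)=\sum_{l=m-d_4+1}^{m}g(l)$, a window of $d_4$ consecutive values of $g$; at $m=\frac{e-1}{2}$, using $\sum_l g(l)=d_1d_2d_3$ and the symmetry of $g$, I would rewrite this as $h_{\frac{e-1}{2}}=d_1d_2d_3-2\,h_B(j-1)-g(j)$ with $j=\frac{e'-d_4}{2}=\frac{d_1+d_2+d_3-d_4-3}{2}$. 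Again all Koszul terms indexed by subsets $S$ containing $2$ or $3$ vanish at $j-1$ and at $j$, so $h_B(j-1)=\binom{j+2}{3}-\binom{j-d_1+2}{3}$ and $g(j)=\binom{j+2}{2}-\binom{j-d_1+2}{2}$; substituting and applying the identity $2\binom{N}{3}+\binom{N}{2}=\tfrac14\binom{2N}{3}$ with $N=j+2$ and $N=j-d_1+2$ (so $2N$ is $d_1+d_2+d_3-d_4+1$ and $-d_1+d_2+d_3-d_4+1$ respectively, both even) collapses everything to the stated formula, the $\binom{\cdot}{3}$'s vanishing automatically in the sub-ranges where the corresponding terms are absent.

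The main obstacle is purely organizational: checking that in each of these evaluations the "inactive" terms of the Koszul alternating sum are exactly those indexed by subsets containing $d_2$ or $d_3$ (this is where the ordering $d_1\le d_2\le d_3\le d_4$ enters), and that the boundary degrees $d_4\in\{-d_1+d_2+d_3,\ d_1+d_2+d_3\}$ together with the parity of $e$ make the single closed formula of (1) — and the piecewise formula of (2) — hold uniformly without separate edge cases. Alternatively, one could instead mimic the Riemann--Roch computation with the rank-$3$ syzygy bundle on $\PP^3$ carried out in the proof of Proposition~\ref{LI,n=4}.
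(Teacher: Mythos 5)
Your argument is correct and is essentially the paper's own proof: both reduce to the Hilbert function $g=h'$ of the artinian complete intersection of type $(d_1,d_2,d_3)$ in three variables, express $h_{R/I}(m)$ as the windowed sum $\sum_{l=m-d_4+1}^{m}g(l)$ (your exact sequence $0\to B(-d_4)\to B\to R/I\to 0$ encodes exactly this relation), and finish using the symmetry of $g$, the truncated expression $g(l)=\binom{l+2}{2}-\binom{l-d_1+2}{2}$ in the relevant low range (where the ordering $d_1\le d_2\le d_3\le d_4$ kills the other Koszul terms), and the identity $2\binom{N}{3}+\binom{N}{2}=\frac14\binom{2N}{3}$, which is the paper's $\binom{m+3}{3}+\binom{m+2}{3}=\frac14\binom{2m+4}{3}$ in disguise. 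Likewise your conclusion of part (2) in the form $\bigl[\frac{d_1+d_2+d_3-d_4}{2}\bigr]_+-\bigl[\frac{-d_1+d_2+d_3-d_4}{2}\bigr]_+$ is precisely how the paper concludes, so no further comparison is needed.
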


\begin{proof} 
  Let $h'_i$ be the $h$-vector of $R/\langle f_1,f_2,f_3\rangle$. Then
  for $e$ odd we get
  \begin{multline*}
    h_{\frac{e-1}2} = \sum_{i=0}^{d_4-1} h'_{\frac{e-1}2-i} =
    \sum_{j=\frac{e+1}2-d_4}^{\frac{e-1}2} h'_j = d_1d_2d_3 -
    \sum_{j=0}^{\frac{e-1}2-d_4}h'_j -
    \sum_{j=\frac{e+1}2}^{d_1+d_2+d_3-3} h'_j \\=
    d_1d_2d_3-\sum_{j=0}^{\frac{d_1+d_2+d_3-d_4-5}2} h'_j -
    \sum_{j=0}^{d_1+d_2+d_3-3-\frac{e+1}2}h'_j
    \\=d_1d_2d_3-\sum_{j=0}^{\frac{d_1+d_2+d_3-d_4-5}2} h'_j -
    \sum_{j=0}^{\frac{d_1+d_2+d_3-d_4-7}2}h'_j
  \end{multline*}
  In the range $0\le j \le \frac{d_1+d_2+d_3-d_4-5}2$ we have that
  $h'_j = \binom{j+2}2- \binom{j-d_1+2}2$ since
  $d_2> \frac{d_1+d_2+d_3-d_4-5}2$. For any $m$ we have that
  \[
  \sum_{j=0}^m \binom{j+2}2 + \sum_{j=0}^{m-1} \binom{j+2}2 =
  \binom{m+3}3+\binom{m+2}3 = \frac14\binom{2m+4}3.
  \]
  Thus we conclude that the maximum value of the $h$-vector is
  \[
  h_{\frac{e-1}2} = h_{\frac{e+1}2}=d_1d_2d_3 - \frac14
  \binom{d_1+d_2+d_3-d_4-1}3 + \frac14\binom{-d_1+d_2+d_3-d_4-1}3.
  \]

  When $e$ is even we want to compute the difference
  \begin{multline*}
    h_{\frac{e}2}-h_{\frac{e}2-1} = \sum_{i=0}^{d_4-1}
    h'_{\frac{e}2-i} - h'_{\frac{e-2}2-i} =
    h'_{\frac{e}2}-h'_{\frac{e-2}2-d_4+1} =
    h'_{\frac{e}2}-h'_{\frac{e}2-d_4} \\=
    h'_{\frac{d_1+d_2+d_3-3}2+\frac{d_4-1}2}-h'_{\frac{d_1+d_2+d_3-d_4}2-2}
    =
    h'_{\frac{d_1+d_2+d_3-3}2-\frac{d_4-1}2}-h'_{\frac{d_1+d_2+d_3-d_4}2-2}
    \\
    = h'_{\frac{d_1+d_2+d_3-d_4}2-1}-h'_{\frac{d_1+d_2+d_3-d_4}2-2}.
  \end{multline*}
  Again, we are in a range where we can use the expression
  $h'_j = \binom{j+2}{2}-\binom{j-d_1+2}{2}$ to conclude that
  \[
  h_{\frac{e}2}-h_{\frac{e}2-1} =
  \left[\frac{d_1+d_2+d_3-d_4}2\right]_+ -
  \left[\frac{-d_1+d_2+d_3-d_4}{2}\right]_+
  \]
  where $[x]_+=x$ for $x\ge 0$ and $[x]_+=0$ for $x<0$. For
  $d_4\le -d_1+d_2+d_3$ this expression equals $d_1$ and for
  $d\ge -d_1+d_2+d_3$ it equals the first term, which concludes the
  proof of the lemma.
\end{proof}

We denote by $Gor(H)$ the scheme parametrizing artinian Gorenstein
codimension 3 algebras $R/I$ with $h$-vector
$H=(1,h_1,\cdots , h_{e-1},h_e)$ \cite{Diesel}. We have

\begin{lemma}\label{dimGorH}
  Let $H=(1,h_1,\cdots , h_{e-1},h_e)$ and
  $H'=(1,h'_1,\cdots , h'_{e-1},h'_e)$ be the $h$-vectors of two
  artinian Gorenstein codimension 3 algebras with odd socle degree
  $e=2r+1$.  Assume that
  \[
  \begin{cases}
    h_i' = h_i, &\text{for $ i\ne r,r+1$,} \\
    h_i' = h_i-1,&\text{ for $i=r, r+1$.}
  \end{cases}
  \]
  Then, it holds:
  \[
  \dim Gor(H)-\dim Gor(H')=h_{r+1}-2h_{r+3}+h_{r+4}+1.
  \]
\end{lemma}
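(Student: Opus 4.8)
The plan is to compute $\dim Gor(H)$ and $\dim Gor(H')$ separately from the minimal free resolution of a general member and then subtract. By \cite{Diesel}, $Gor(H)$ and $Gor(H')$ are irreducible, the graded Betti numbers of a general element are determined by its $h$-vector, and (by counting parameters in the Buchsbaum--Eisenbud skew-symmetric presentation; cf.\ also \cite{CV}) there is a formula
\[
\dim Gor(H)=\dim\mathcal V_\phi-\dim\mathcal G+\dim\operatorname{Stab}(\phi),
\]
where, for $R/I$ general in $Gor(H)$ with presentation matrix $\phi\colon F_2\to F_1$, $\mathcal V_\phi$ is the affine space of skew-symmetric matrices with the generic degree pattern, $\mathcal G=\operatorname{Hom}_R(F_1,F_1)_0$ acts by $g\cdot\phi=g\,\phi\,g^{\mathrm t}$, and $\operatorname{Stab}(\phi)$ is the stabilizer of a general $\phi$. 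So I would (i) identify how the generic resolutions for $H$ and $H'$ differ, (ii) substitute into this formula, and (iii) simplify the difference.

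For step (i): since $e=2r+1$ we have $s:=e+3=2r+4$, and self-duality of $0\to R(-s)\to F_2\to F_1\to R$ pairs a minimal generator of degree $a$ with a first syzygy of degree $s-a$, so the generator and syzygy degrees are mirror-symmetric about $r+2$. Comparing the numerators of the Hilbert series,
\[
K_{R/I}(t)-K_{R/I'}(t)=(1-t)^3(t^r+t^{r+1})=t^r-2t^{r+1}+2t^{r+3}-t^{r+4}.
\]
Reading off the coefficients in degrees $\le r+1$, a general $R/I\in Gor(H)$ has, relative to a general $R/I'\in Gor(H')$, one fewer minimal generator in degree $r$ and two more in degree $r+1$, hence (dually) two more first syzygies in degree $r+3$ and one fewer in degree $r+4$; in all degrees outside $\{r,r+1,r+2,r+3,r+4\}$ the Betti numbers agree. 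Since a codimension $3$ Gorenstein ideal has an odd number of minimal generators, whereas the tally just described changes this number by $+1$, there must additionally be a matched generator/first-syzygy pair in the self-dual degree $r+2$, of multiplicity $k_H$ resp.\ $k_{H'}$ with $k_H-k_{H'}$ odd; this multiplicity is determined by the generic Betti-number description in \cite{Diesel} (equivalently it is read off from the $g$-vector near the middle).

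For steps (ii)--(iii): inserting the two resolutions into the displayed dimension formula, all contributions of $\mathcal V_\phi$, $\mathcal G$ and $\operatorname{Stab}(\phi)$ arising from degrees bounded away from $r+2$ are the same for $H$ and $H'$ and cancel; what remains is a finite alternating sum of binomial coefficients in the $h$-values near the middle, together with terms involving $k_H,k_{H'}$, and a direct (somewhat lengthy but mechanical) computation collapses this to $h_{r+1}-2h_{r+3}+h_{r+4}+1$. The main obstacle is step (i): pinning down the generic Betti numbers precisely --- in particular the middle-degree multiplicities $k_H,k_{H'}$ and the consequent contribution of $\operatorname{Stab}(\phi)$, because the stabilizer of a general presentation matrix need not be finite (the co-occurrence of generators in degrees $r+1$ and $r+2$ forces a positive-dimensional stabilizer, as in the example below). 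Once these inputs are in hand, step (iii) is routine bookkeeping. As a consistency check: for $H=(1,3,6,6,3,1)$ (so $r=2$), $Gor(H)$ is dense in $\PP^{20}$ and $Gor(H')=Gor(1,3,5,5,3,1)$ is the family of complete intersections of type $(2,3,3)$, of dimension $15$; and indeed $20-15=5=h_3-2h_5+h_6+1$.
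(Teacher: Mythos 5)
Your proposal is a strategy outline rather than a proof, and the gap sits exactly where you yourself flag it. The paper's argument is short because it invokes the closed-form dimension formula from \cite[Example 5.2]{CV}, namely $\dim Gor(H)=\frac12\bigl(3h_r+h_{r-1}-\sum_{i=0}^e h_ip_i\bigr)$ with $p_i=h_i-3h_{i-1}+3h_{i-2}-h_{i-3}$, which depends only on the $h$-vector; applying it to $H$ and $H'$ and subtracting is then a finite (if tedious) computation that actually produces $h_{r+1}-2h_{r+3}+h_{r+4}+1$. Your route instead re-derives a dimension count from the Buchsbaum--Eisenbud presentation, and three essential inputs are missing: (a) the formula $\dim Gor(H)=\dim\mathcal V_\phi-\dim\mathcal G+\dim\operatorname{Stab}(\phi)$ is asserted, not justified (one needs that the fibres of the map from skew-symmetric matrices to points of $Gor(H)$ are orbits, or at least have dimension $\dim\mathcal G-\dim\operatorname{Stab}(\phi)$ generically); (b) the generic Betti numbers, in particular the middle-degree ghost multiplicities $k_H,k_{H'}$ and the dimensions of the stabilizers, are never determined --- and they genuinely matter: already for $H=(1,3,6,6,3,1)$ the generic member is forced by parity to have $5$ generators (a $5\times 5$ presentation matrix) while the generic member of $Gor(H')$ has $3$, so the two parameter counts are structurally different and $\operatorname{Stab}(\phi)$ can be positive-dimensional, as you note; (c) the final ``routine bookkeeping'' that is supposed to collapse everything to $h_{r+1}-2h_{r+3}+h_{r+4}+1$ is never carried out, so the stated identity is only supported by one numerical example. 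Your step (i) also silently assumes that in degrees $r,r+1,r+3,r+4$ the generic resolutions have no generator/syzygy overlap, which is true but needs Diesel's minimality-of-generic-Betti-numbers result to be cited and used, not just the comparison of $K$-polynomials.

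The most economical way to complete your argument is to replace the orbit count by a formula for $\dim Gor(H)$ purely in terms of $H$ --- precisely what \cite{CV} provides (their formula already encapsulates the pfaffian parameter count, stabilizers included) --- and then perform the subtraction explicitly, using symmetry of $H$ and the relation $h'_i=h_i-\delta_{i,r}-\delta_{i,r+1}$ to see which terms survive. If you insist on the presentation-matrix count, you must prove the orbit-fibre statement, compute $k_H,k_{H'}$ and $\dim\operatorname{Stab}(\phi)$ for the generic members of both families, and then exhibit the cancellation; as written, none of this is done, so the lemma is not yet proved.
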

\begin{proof} By \cite[Example 5.2]{CV}, we have
  \[
  \dim Gor(H)=\frac12(3h_r+h_{r-1}-\sum _{i=0}^eh_ip_i)
  \]
  where $p_i=h_i-3h_{i-1}+3h_{i-2}-h_{i-3}.$ Therefore after a long
  but routine calculation we obtain
  \[
  \begin{array}{rcl} \dim Gor(H)-\dim Gor(H') & = & \frac12(3h_r+h_{r-1}-\sum _{i=0}^eh_ip_i)-\frac12(3h'_r+h'_{r-1}-\sum _{i=0}^eh'_ip'_i)\\
                                              & = &
                                                    h_{r+1}-2h_{r+3}+h_{r+4}+1
  \end{array}
  \]
  which proves what we want.
\end{proof}

\begin{theorem}\label{LI,n=4,d1d2d3d4}
  Let
  $I = \langle F_1, F_2 , F_3 ,F_4 \rangle \subset R =
  k[x_1,x_2,x_3,x_4]$
  be a complete intersection artinian ideal generated by general forms
  of degree $(d_1,d_2,d_3,d_4)$. Assume that
  $d_1\le d_2 \le d_3\le d_4$.  Let $e$ be the socle degree of $R/I$
  and let $(1,h_1,\cdots , h_{e-1},h_e)$ be the $h$-vector of $R/I$.
  Then
  \begin{enumerate}
  \item If $e$ is odd then the non-Lefschetz locus
    $\mathcal L_I\subset (\PP^{3})^\ast$ is a surface of degree
    \[
    h_{\frac{e-1}{2}}= d_1d_2d_3-\frac14 \binom{d_1+d_2+d_3-d_4-1}3 +
    \frac14\binom{-d_1+d_2+d_3-d_4-1}3.
    \]
  \item If $e$ is even then
    \[
    \codim \mathcal L_I = \min \{ h_{\frac{e}{2}}-h_{\frac{e}{2} -1}+1
    , 4 \}.
    \]
    In particular, $\mathcal L_I\subset (\PP^{n-1})^\ast$ is non-empty
    if and only if $h_{\frac{e}{2}}-h_{\frac{e}{2} -1}\le 2$ if and
    only if $d_4\ge d_1+d_2+d_3$ or
    $-d_1+d_2+d_3\le d_4\le d_1+d_2+d_3$ and $d_1+d_2+d_3-d_4\le 4$ or
    $d_4\le -d_1+d_2+d_3$ and $d_1\le 2$. In these cases
    $\delta _I:=\deg(\mathcal L_I)= \binom{ h_{\frac{e}{2}}}
    {h_{\frac{e}{2}}-h_{\frac{e}{2} -1}+1}$.
  \end{enumerate}

\end{theorem}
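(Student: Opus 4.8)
\emph{Part (1)} needs nothing new. When $e$ is odd the two middle values of the $h$-vector coincide, so by Remark~\ref{e even} the locus $\mathcal L_I$ is the hypersurface cut out by the determinant of the square matrix of the middle multiplication map, and its degree $h_{(e-1)/2}$ is the value computed in Lemma~\ref{HF,n=4,d1d2d3d4}(1). For \emph{Part (2)} (so $e$ even) I would first clear the values of $d_4$ close to $d_1+d_2+d_3$: if $d_4\ge d_1+d_2+d_3$ then $h_{e/2-1}=h_{e/2}$ and Remark~\ref{large dn}(1) gives a degree-$h_{e/2}=\binom{h_{e/2}}{1}$ hypersurface; $d_4=d_1+d_2+d_3-2$ is Remark~\ref{large dn}(2) and $d_4=d_1+d_2+d_3-4$ is Proposition~\ref{special case of conj}, where the codimension is the expected $2$, resp.\ $3$, so the degree $\binom{h_{e/2}}{h_{e/2}-h_{e/2-1}+1}$ follows from $\mathcal L_I$ being a determinantal scheme of the expected codimension (Lemma~\ref{expected}). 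The remaining even-$e$ values all satisfy $d_4\le d_1+d_2+d_3-6$; this is the situation called Case~2 above, and there we must establish (\ref{to show}): that the locus $Y\subset ACI_\ell(d_1,d_2,d_3,d_4)$ of restricted almost complete intersections $\bar I\subset S=k[x_1,x_2,x_3]$ with $h_{S/\bar I}(\tfrac e2+1)>0$ has codimension $\min\{h_{e/2}-h_{e/2-1}+1,4\}$.

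This I would prove by a liaison computation generalizing the proof of Proposition~\ref{LI,n=4}(2). One records $\dim ACI_\ell(d_1,d_2,d_3,d_4)$ (an explicit parameter count depending on the multiplicities among the $d_i$; a Grassmannian when all four are equal). As in Remark~\ref{by one}, the general $\bar I\in Y$ has $h_{S/\bar I}$ equal to the first difference of the $h$-vector of $R/I$ with a single extra $+1$ in degrees $\tfrac e2$ and $\tfrac e2+1$, so $S/\bar I$ has socle degree $\tfrac e2+1$. Because $d_4\le d_1+d_2+d_3-4$, three of the generators of a general $\bar I$ form a regular sequence $J'$ of type $(d_1,d_2,d_3)$ in $S$ of socle degree $d_1+d_2+d_3-3>\tfrac e2+1$, and linking $\bar I=J'+(\bar F_4)$ through $J'$ produces a codimension-three Gorenstein ideal $J_1=J':\bar F_4$ with $h_{S/J_1}(t)=h_{S/J'}(t)-h_{S/\bar I}(d_1+d_2+d_3-3-t)$; Lemma~\ref{HF,n=4,d1d2d3d4} supplies the needed values, and a parity check shows that the socle degree of $J_1$ is odd, so the dimension formula for $Gor(H_1)$ from \cite[Example 5.2]{CV} (equivalently Lemma~\ref{dimGorH}) applies. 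Then
\[
\dim Y=\dim Gor(H_1)+b_1-b_2,
\]
where $b_1$, resp.\ $b_2$, is the dimension of the family of complete intersections of type $(d_1,d_2,d_3)$ contained in $J_1$, resp.\ in $\bar I$, the latter subtracted to cancel the over-counting (many $J_1$ link to the same $\bar I$); both are read off from the relevant Hilbert functions.

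Subtracting and simplifying, with the case split of Lemma~\ref{HF,n=4,d1d2d3d4}(2) according to whether $d_4\ge -d_1+d_2+d_3$ (where $h_{e/2}-h_{e/2-1}=\tfrac{d_1+d_2+d_3-d_4}{2}$) or $d_4\le -d_1+d_2+d_3$ (where it equals $d_1$), the codimension of $Y$ should come out exactly $h_{e/2}-h_{e/2-1}+1$. If this is $\ge 4$, then $\mathcal L_I\subset(\PP^3)^\ast$ has codimension $\ge 4$ and hence is empty; if it equals $3$ — which in this range forces $d_1=2$ — then $\mathcal L_I$ is a determinantal scheme of the expected codimension, so Lemma~\ref{expected} gives $\deg\mathcal L_I=\binom{h_{e/2}}{3}=\binom{h_{e/2}}{h_{e/2}-h_{e/2-1}+1}$. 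The main obstacle is precisely this liaison bookkeeping: pinning down $H_1$ and evaluating $\dim Gor(H_1)$, $b_1$, and $b_2$ correctly in each sub-case, and checking that the subtraction telescopes to $h_{e/2}-h_{e/2-1}+1$. A secondary point, needed for the fibre-dimension argument to be uniform, is to show that $Y$ is irreducible and that its general member really has the indicated Hilbert function and admits the link through a complete intersection of type $(d_1,d_2,d_3)$.
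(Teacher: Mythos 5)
Your treatment of part (1) and of the boundary cases of part (2) ($d_4\ge d_1+d_2+d_3$, $d_4=d_1+d_2+d_3-2$, $d_4=d_1+d_2+d_3-4$) coincides with the paper's, and your plan for $d_4\le d_1+d_2+d_3-6$ — reduce to the codimension statement (\ref{to show}) for the locus $Y$ of restricted almost complete intersections with excessive Hilbert function, and compute that codimension by liaison through a complete intersection of type $(d_1,d_2,d_3)$ in $S=R/(\ell)$ — is the right framework and is indeed the one the paper uses. But the decisive step is exactly the one you defer: you write that after ``subtracting and simplifying'' the codimension ``should come out'' to be $h_{\frac e2}-h_{\frac e2-1}+1$, and you yourself flag the liaison bookkeeping (pinning down $H_1$, evaluating $\dim Gor(H_1)$, $b_1$, $b_2$, and $\dim ACI_\ell(d_1,d_2,d_3,d_4)$ in each sub-case) as the main obstacle. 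Since that identity \emph{is} the content of the theorem in the remaining range, the proposal as it stands is an outline with the central computation missing, not a proof.

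The paper avoids precisely the heavy bookkeeping you would face. Rather than computing $\dim ACI_\ell$, $\dim Gor(H_1)$, $b_1$ and $b_2$ individually (which is how Proposition~\ref{LI,n=4} was handled in the special case $d_1=\dots=d_4=d$), it links \emph{both} families: a general almost complete intersection of type $(d_1,d_2,d_3,d_4)$ links to a Gorenstein ideal $G$ with $h$-vector $H_G$ of odd socle degree $s=d_1+d_2+d_3-d_4-3$, and a general member of $Y$ links (through a complete intersection of the same type) to a Gorenstein ideal $G'$ whose $h$-vector $H_{G'}$ differs from $H_G$ only by $-1$ in the two middle degrees $\frac{s\pm1}2$. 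The correction terms in the two liaison dimension counts (the choice of the complete intersection inside the Gorenstein ideal, and the overcounting term inside the almost complete intersection) are equal for the two families and cancel, so the codimension of $Y$ equals the single difference $\dim Gor(H_G)-\dim Gor(H_{G'})$. That difference is then evaluated by Lemma~\ref{dimGorH} as $f_{\frac{s-1}2+1}-2f_{\frac{s-1}2+3}+f_{\frac{s-1}2+4}+1$, which, rewritten in terms of the $h$-vector of the codimension-three complete intersection $(d_1,d_2,d_3)$ via $f_i=\tilde h_{i+d_4}$, produces exactly the case-split values of Lemma~\ref{HF,n=4,d1d2d3d4}(2) plus one. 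To complete your argument you would either have to carry out your heavier individual-dimension computation in all sub-cases, or incorporate this cancellation so that only the difference of Gorenstein family dimensions needs to be computed; in addition you would need to justify (as you note) that the general member of $Y$ has the stated Hilbert function and admits the required link, which the paper asserts for general members of the two families.
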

\begin{proof} Part (1) follows from Lemma \ref{HF,n=4,d1d2d3d4} taking
  into account that if $R/I$ has the WLP and the socle degree is odd
  then the non-Lefschetz locus is a surface of degree
  $h_{\frac{e-1}{2}}$.

  We now consider (2). By Remark \ref{large dn} (1) , if
  $d_4\ge d_3+d_2+d_1$ (remembering that $e$ is even, so
  $d_4 \neq d_3+d_2+d_1 -1$), then
  $h_{\frac{e}{2}}=h_{\frac{e}{2}-1}=d_1d_2d_3$ and
  $\mathcal L_I\subset (\PP^{n-1})^\ast$ is a surface of degree
  $d_1d_2d_3$. By Remark \ref{large dn} (2) , if $d_4= d_3+d_2+d_1-2$,
  then $h_{\frac{e}{2}}-h_{\frac{e}{2}-1}=1$,
  $h_{\frac{e}{2}}=d_1d_2d_3$ and $\mathcal L_I\subset (\PP^{3})^\ast$
  is an arithmetically Cohen-Macaulay curve of degree
  $\binom{d_1d_2\cdots d_{n-1}}{2}$.  By Proposition \ref{special case
    of conj} if $d_4=d_1+d_2+d_3-4$ then
  $\hbox{codim } {\mathcal L}_I= \min
  \{h_{\frac{e}{2}}-h_{\frac{e}{2}-1}+1,4\} =3$
  (where the last equality follows from Lemma \ref{HF,n=4,d1d2d3d4}
  (2)) and has degree $\binom{h_{\frac{e}{2}}}{3}$.

  From now on we assume $d_4 \leq d_1+d_2+d_3-6$. We fix a linear form
  $\ell $ and we set $S = R/(\ell )$.  We denote by
  $\mathcal A_{d_1,d_2,d_3,d_4}$ the set of almost complete
  intersection ideals $J \subset S$ of type $(d_1,d_2,d_3,d_4)$ and by
  ${\mathcal B}_{d_1,d_2,d_3,d_4}$ the set of almost complete
  intersection ideals $J\subset S$ of type $(d_1,d_2,d_3,d_4)$ and
  $h$-vector
  \[
  (1, h_1-1,h_2-h_1, \cdots , h_{\frac{e}{2}-1}-h_{\frac{e}{2}-2},
  h_{\frac{e}{2}}-h_{\frac{e}{2}-1}+1, 1).
  \]
  
  A general ideal $J$ in ${\mathcal A}_{d_1,d_2,d_3,d_4}$ can be
  linked by means of a complete intersection $K$ of type
  $(d_1,d_2,d_3)$ to a Gorenstein ideal $G$ with socle degree
  $s:=d_1+d_2+d_3-d_4-3$ and $h$-vector $H_G=(1,f_1,\cdots , f_s)$.  A
  general ideal $J'$ in ${\mathcal B}_{d_1,d_2,d_3,d_4}$ can be linked
  by means of a complete intersection $K$ of type $(d_1,d_2,d_3)$ to a
  Gorenstein ideal $G'$ with socle degree $s$ and $h$-vector
  $H_{G'}=(1,f'_1,\cdots , f'_s)$. Moreover, we have:
  \[
  \begin{array}{rcl} f_i' & = & f_i \text{ for } i\ne \frac{s-1}{2}, \frac{s+1}{2} \\
    f_i' & = & f_i-1 \text{ for } i=\frac{s-1}{2}, \frac{s+1}{2}
  \end{array}. 
  \]
  
  According to (\ref{to show}), to finish the proof it is enough to
  demonstrate that
  \[
  \dim Gor(H_G)-\dim Gor(H_{G'})=h_{\frac{e}{2}}-h_{\frac{e}{2}-1}+1
  \]
  (see also the end of the proof of Proposition \ref{LI,n=4} for the
  equivalence).
  \noindent Let us prove it. To this end, we denote by
  $(1,\tilde{h}_1,\cdots,\tilde{h}_w)$ the $h$-vector of the complete
  intersection ideal $K$ in $S$ of type $(d_1,d_2,d_3)$. So,
  $w=d_1+d_2+d_3-3$. Applying Lemma \ref{dimGorH}, we obtain
  \[
  \begin{array}{rcl}
    \dim Gor(H_G)-\dim Gor(H_{G'}) & = & f_{\frac{s-1}{2}+1}-2f_{\frac{s-1}{2}+3}+f_{\frac{s-1}{2}+4}+1 \\
                                   & = & (f_{\frac{s-1}{2}+1}-f_{\frac{s-1}{2}+2})+
                                         (f_{\frac{s-1}{2}+2}-2f_{\frac{s-1}{2}+3}+f_{\frac{s-1}{2}+4})+1\\
                                   & = & -\Delta f_{\frac{s-1}{2}+2} -\Delta ^2 f_{\frac{s-1}{2}+4}
                                         +1.
  \end{array}
  \]
  Since $G$ (resp. $G'$) is linked to $J$ (resp. $J'$) by a complete
  intersection $K$ of type $d_1, d_2, d_3$ we have
  $f_i=\tilde{h}_{i+d_4}$ for $i=\frac{s-1}{2}+1,
  \frac{s-1}{2}+2$. So, we get:
  \[
  \begin{array}{rcl} -\Delta f_{\frac{s-1}{2}+2} -\Delta ^2 f_{\frac{s-1}{2}+4} +1 & = & -\Delta \tilde{h}_{\frac{d_1+d_2+d_3+d_4}{2}}+\Delta ^2 \tilde{h}_{\frac{d_1+d_2+d_3+d_4}{2}+2} +1 \\
                                                                                   & = & \begin{cases} 1 & \text{ if } d_4\ge d_1+d_2+d_3 \\
                                                                                     \frac{d_1+d_2+d_3-d_4}{2}+1
                                                                                     &
                                                                                     \text{
                                                                                       if
                                                                                     }
                                                                                     -d_1+d_2+d_3\le
                                                                                     d_4\le
                                                                                     d_1+d_2+d_3
                                                                                     \\
                                                                                     d_1+1
                                                                                     &
                                                                                     \text{
                                                                                       if
                                                                                     }
                                                                                     d_4\le
                                                                                     -d_1+d_2+d_3.
                                                                                   \end{cases}
  \end{array}
  \]
  Therefore, applying Lemma \ref{HF,n=4,d1d2d3d4}, we conclude that
  \[
  \dim Gor(H_G)-\dim Gor(H_{G'})=h_{\frac{e}{2}}-h_{\frac{e}{2}-1}+1.
  \]
\end{proof}


\section{The non-Lefschetz locus of a general height three Gorenstein
  algebra}\label{sec:general Gorenstein}

When the Hilbert function is fixed, the height three Gorenstein
algebras with that Hilbert function lie in a flat family
\cite{Diesel}, so it makes sense to talk about the {\em general}
Gorenstein algebra in this family. From now on, we will abuse
terminology and refer to a general Gorenstein algebra, and assume that
it is understood that we have fixed the Hilbert function; we will also
assume that it is understood that in this section we refer {\em only}
to the height three situation, except for a small remark at the end of
the section.  In this section we will describe the codimension of the
non-Lefschetz locus of a general Gorenstein algebra, and in particular
describe exactly when it is of the expected codimension (given the
Hilbert function) in the sense of the earlier sections. One might
expect that just as with complete intersections, the general
Gorenstein algebra has non-Lefschetz locus of the expected
codimension, but this is not always the case. We give a classification
of those Hilbert functions for which the general Gorenstein algebras
fail to have non-Lefschetz locus of the expected codimension.

The Hilbert functions of height three Gorenstein algebras are
well-understood. They are the so-called {\em Stanley-Iarrobino (SI)}
sequences of height three. They are characterized as follows. A
sequence $\underline{h} = (1, 3, h_2,\dots,h_{e-1},h_e)$ is an
SI-sequence if and only if

\begin{itemize}
\item[(i)] $\underline{h}$ is symmetric.

\item[(ii)] Setting $g_i = h_i - h_{i-1}$ for
  $1 \leq i \leq \lfloor \frac{e}{2} \rfloor$, the sequence
  $\underline{g} = (1,2, g_2,\dots,g_{\lfloor \frac{e}{2} \rfloor})$
  satisfies Macaulay's growth condition.
\end{itemize}

\noindent Condition (ii) says that the sequence
$(1,3,h_2,\dots, h_{\lfloor \frac{e}{2} \rfloor})$ is the beginning of
the Hilbert function of some zero-dimensional scheme in $\mathbb P^2$
of degree $h_{\lfloor \frac{e}{2} \rfloor}$. It is important to note
that it does not mean that for every Gorenstein algebra $R/I$ with
this Hilbert function, the components of $R/I$ up to degree
$\lfloor \frac{e}{2} \rfloor$ actually coincide with the corresponding
components of a zero-dimensional scheme. If such a condition does
hold, and if the zero-dimensional scheme is reduced, we will say that
$R/I$ ``comes from points." For any SI-sequence, by taking a suitable
Gorenstein quotient of the coordinate ring of a suitable reduced set
of points, there is always a subfamily (of the Gorenstein family
corresponding to the SI-sequence) that {\em does} come from points.

We say that a sequence $(1,2, g_2, g_3, \dots, g_k)$ is of {\em
  decreasing type} if begins with $(1,2,3,\dots)$ (growing with the
polynomial ring $k[x,y]$), then is possibly flat, then is strictly
decreasing.

\begin{theorem} \label{gormain} Fix an SI-sequence
  $\underline{h} = (1, 3, h_2, \dots, h_{e-2}, 3,1)$ of socle degree
  $e$.

  \begin{itemize}
  \item[(i)] If there are two or more consecutive values of $h_i$ that
    are equal then the general Gorenstein algebra with Hilbert
    function $\underline{h}$ has non-Lefschetz locus of the expected
    codimension, namely one. This holds, in particular, when $e$ is
    odd.

  \item[(ii)] Assume $e$ is even and
    \begin{equation} \label{si seq} \underline{h} = (1,3,h_2,\dots,
      h_{ \frac{e}{2}-1}, h_{\frac{e}{2}}, h_{\frac{e}{2} +1}, \dots,
      h_{e-2}, 3, 1)
    \end{equation}
    where $h_{\frac{e}{2} -1} < h_{\frac{e}{2}} > h_{\frac{e}{2} +1}$.
    Let $\underline{g}$ be the sequence of positive first differences,
    as above.  Then the general Gorenstein algebra with this Hilbert
    function has non-Lefschetz locus of the expected codimension if
    and only if $\underline{g}$ is of decreasing type. If
    $\underline{g}$ is not of decreasing type then the non-Lefschetz
    locus has codimension one.

  \end{itemize}
\end{theorem}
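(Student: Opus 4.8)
The plan is to dispatch part (i) quickly and concentrate the effort on part (ii). For part (i), because $\underline h$ is an SI-sequence, $\underline g=(1,2,g_2,\dots,g_{\lfloor e/2\rfloor})$ obeys Macaulay's growth condition, so a zero entry of $\underline g$ propagates to the right; together with the symmetry of $\underline h$ this forces any pair of equal consecutive values $h_i=h_{i+1}$ to straddle the middle, i.e.\ $h_{\frac e2-1}=h_{\frac e2}$ when $e$ is even (and when $e$ is odd the two central values coincide automatically). So in case (i) there is an index $i$ with $h_i=h_{i+1}$, and by Corollary~\ref{Scheme-theoretic} the scheme $\mathcal L_I$ is cut out by the determinant of the square matrix of linear forms representing $\times\ell\colon[A]_i\to[A]_{i+1}$. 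Either this determinant vanishes identically --- in which case $\mathcal L_I=(\PP^2)^\ast$ and $R/I$ fails the WLP --- or it does not, so $\mathcal L_I$ is a hypersurface, of the expected codimension one. Since it is known that the general codimension three Artinian Gorenstein algebra with a given Hilbert function has the WLP (alternatively, this follows by semicontinuity from the subfamily coming from a general set of points), the second alternative holds for general $I$, which proves (i).

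For part (ii), put $g:=g_{e/2}=h_{e/2}-h_{e/2-1}\ge 1$ and $i_0:=\frac e2-1$; by Corollary~\ref{Scheme-theoretic} the non-Lefschetz locus is $\mathcal L_I=\mathcal L_{I,i_0}$ scheme-theoretically, the determinantal locus where $\times\ell\colon[A]_{i_0}\to[A]_{i_0+1}$ is not injective, of expected codimension $\min\{g+1,3\}$. Using the incidence-variety setup of Section~\ref{sec:general ci} --- the variety $X\subset(\PP^2)^\ast\times Gor(H)$ of pairs $(\ell,I)$ for which $\times\ell$ is not injective in degree $i_0$, its two projections, and the transitive $\mathrm{PGL}_3$-action on $(\PP^2)^\ast$ which makes all fibres of the first projection isomorphic --- the statement reduces to a single dimension count on $Gor(H)$: fixing a linear form $\ell$ and writing $Z:=\{I\in Gor(H):\times\ell$ not injective in degree $i_0\}$, one has $\codim_{(\PP^2)^\ast}\mathcal L_I=\codim_{Gor(H)}Z$ (with $\mathcal L_I=\emptyset$ when this codimension is $\ge 3$). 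Thus everything comes down to showing $\codim_{Gor(H)}Z=\min\{g+1,3\}$ if $\underline g$ is of decreasing type and $=1$ otherwise.

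To evaluate $\codim_{Gor(H)}Z$ I would adapt the liaison argument of Theorem~\ref{LI,n=4,d1d2d3d4}. With $S:=R/(\ell)\cong k[u,v]$, restriction sends a general $I\in Gor(H)$ to a codimension two Artinian $S$-ideal $\bar I$ of Hilbert function $\underline g$ and a general element of $Z$ to one of Hilbert function $\mathbf g^+:=(1,2,g_2,\dots,g_{e/2-1},g+1,1)$ (the failure being ``by one'', Remark~\ref{by one}). One then (a) compares the dimensions of the parameter spaces of codimension two Artinian $S$-ideals with Hilbert functions $\underline g$ and $\mathbf g^+$ --- standard, from the Hilbert--Burch resolution of the generic such ideal --- and (b) shows that the restriction map $I\mapsto\bar I$ has fibres of equal dimension over a generic $\underline g$-ideal and over a generic $\mathbf g^+$-ideal, so that $\dim Gor(H)-\dim Z$ equals the difference computed in (a). Step (b) is supplied, exactly as the analogous step in Theorem~\ref{LI,n=4,d1d2d3d4}, by linking to codimension three Gorenstein ideals and invoking Lemma~\ref{dimGorH} to compute the relevant difference of the form $\dim Gor(H')-\dim Gor(H'')$ for $h$-vectors $H',H''$ differing by one in their two central entries; a direct computation then gives $\dim Gor(H)-\dim Z=g+1$ precisely when $\underline g$ is of decreasing type.

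When $\underline g$ is \emph{not} of decreasing type this count degrades, and one must instead prove $\codim_{Gor(H)}Z=1$, i.e.\ that $\mathcal L_I$ always contains a curve. Here a flat stretch of $\underline g$ strictly below its maximum forces, through the Macaulay/Gotzmann analysis of minimal Hilbert-function growth in the two-variable ring $S$, the generic restricted ideal $\bar I\subset k[u,v]$ to contain $(\ell')^{k}\mathfrak m_S$ for some linear form $\ell'\in S$ and suitable $k$ --- equivalently $S/\bar I$ acquires an extra socle generator in a degree below $e/2$, and this happens for \emph{every} $\ell$. Transporting this rigidity back to $(\PP^2)^\ast$ exhibits a distinguished codimension one sublocus of $\mathcal L_I$, so $\codim\mathcal L_I\le 1$; equality holds because the general $R/I$ has the WLP. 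I expect the main obstacle to be precisely this last case: pinning down the extra-socle mechanism and checking, via the dimension count, that it accounts for exactly the gap with the decreasing-type estimate; the rest is bookkeeping of the linkage correspondence and of $\dim Gor(\,\cdot\,)$ through Lemma~\ref{dimGorH}.
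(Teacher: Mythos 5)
Your part (i) is fine and essentially the paper's argument (the flat forced by Macaulay growth plus symmetry, a square matrix in the middle, and Harima's WLP result). For part (ii), however, both directions have genuine gaps. In the decreasing-type direction you replace the paper's construction by an incidence-variety/dimension count, which is a legitimately different strategy (the paper instead builds an explicit algebra ``coming from points'': by Maggioni--Ragusa \cite{MaggioniRagusa} a reduced set $Z$ of $h_{e/2}$ points with $h$-vector $\underline g$ of decreasing type has UPP, one takes a Gorenstein quotient of $R/I_Z$ as in \cite{BoijPoints}, checks injectivity of $\times\ell$ in the middle degree case-by-case via Cayley--Bacharach, and concludes by semicontinuity). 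But your count is not carried out at the two places where it matters: (a) the restriction map $Gor(H)\to\{\bar I\subset k[u,v]\}$ is not analogous to $CI\to ACI_\ell$ of Section~\ref{sec:general ci} -- a Gorenstein ideal with fixed Hilbert function has no fixed generator degrees, so constancy of the fibre dimension over the $\underline g$-stratum and over the $\mathbf g^+$-stratum is exactly the hard point and is not supplied; (b) the tool you invoke for it, linkage to codimension three Gorenstein ideals plus Lemma~\ref{dimGorH}, lives in three variables, whereas your restricted ideals live in $k[u,v]$, where linkage produces codimension two complete intersections and Lemma~\ref{dimGorH} does not apply; and (c) the decisive assertion that ``a direct computation gives $\dim Gor(H)-\dim Z=g+1$ precisely when $\underline g$ is of decreasing type'' is exactly the content of the theorem, and nothing in your sketch shows where the decreasing-type hypothesis enters the count.

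The non-decreasing-type direction is where the proposal actually goes wrong. Even granting that for every $\ell$ the restricted algebra $S/\bar I$ had extra socle below degree $e/2$ (and the stronger claim $\bar I\supseteq(\ell')^k\mathfrak m_S$ is unjustified: Davis/Macaulay in two variables yields a GCD of the relevant components, which need not be a power of a linear form), this would not make $\ell$ a non-Lefschetz element -- socle of $R/(I,\ell)$ below the middle is perfectly compatible with injectivity of $\times\ell\colon[A]_{j-1}\to[A]_j$ in all degrees $j\le e/2$ -- and a condition holding for \emph{every} $\ell$ cannot single out a codimension-one sublocus of $(\PP^2)^\ast$. The paper's mechanism is different and is what you need: by Ragusa--Zappal\`a \cite{RZ2}, for general $I$ the generators of $I$ of degree $\le i-1$ (where $i-1$ is the first degree with $g_{i-2}>g_{i-1}=g_i$) share a common factor $F$ of degree $g_i$, with the generators of $I:F$ in those degrees defining a reduced point set $Z$; Davis's theorem \cite{davis} then shows that any $\ell$ vanishing at a point of $Z$ kills a nonzero element of $[R/I]_{i-1}$, so the failure locus in degree $i-1$ has codimension one, and Proposition~\ref{inclusions} transports this failure to the middle degree, giving $\codim\mathcal L_I=1$. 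Without an argument of this kind (a divisorial family of $\ell$'s, each with a concrete kernel element, pushed to the middle degree), the ``not of decreasing type'' half of (ii) is unproved.
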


\begin{proof}
  Suppose
  $h_{\lfloor \frac{e}{2} \rfloor} = h_{\lfloor \frac{e}{2} \rfloor
    +1}$,
  for instance if the socle degree $e$ is odd. Then the expected
  codimension of the non-Lefschetz locus is one. Since it is known
  that the general height three artinian Gorenstein algebra with any
  given Hilbert function has the WLP \cite{Harima}, the non-Lefschetz
  locus of the general Gorenstein algebra with odd socle degree has
  the expected codimension.  So from now on assume that $e$ is even,
  and that
  $h_{\frac{e}{2} -1} < h_{\frac{e}{2}} > h_{\frac{e}{2} +1}$.

  We first assume that $\underline{g}$ is of decreasing type.  Our
  strategy will be to construct an explicit Gorenstein algebra having
  such a Hilbert function and non-Lefschetz locus of expected
  codimension; then by semicontinuity the general Gorenstein algebra
  with this Hilbert function has non-Lefschetz locus of the expected
  codimension.

  So consider the SI-sequence (\ref{si seq}), and assume that its
  first difference is of decreasing type. Let $Z$ be a reduced set of
  $h_{\frac{e}{2}}$ points in $\mathbb P^2$ with Hilbert function
  given by
  \[
  (1,3, h_2,\dots,h_{\frac{e}{2} -1}, h_{\frac{e}{2}},
  h_{\frac{e}{2}}, \dots).
  \]
  The $h$-vector of $Z$ is given by the first difference sequence
  $\underline{g}$. Let $I$ be an artinian Gorenstein ideal obtained as
  a suitable quotient of $R/I_Z$, so that the Hilbert function of
  $R/I$ is precisely $\underline{h}$. (See \cite{BoijPoints}.)  This
  means that $[I]_i = [I_Z]_i$ for $i \leq \frac{e}{2}$. We want to
  show:

  \begin{itemize}

  \item[(i)] $\mathcal L_I = \emptyset$ if $g_{\frac{e}{2}} \geq 2$;

  \item[(ii)] $\hbox{codim } \mathcal L_I = 2$ if
    $g_{\frac{e}{2}} = 1$;

  \end{itemize}

  \noindent We have already seen that $\hbox{codim } \mathcal L_i = 1$
  if $g_{\frac{e}{2}} = 0$, so we have assumed
  $h_{\frac{e}{2}-1} < h_{\frac{e}{2}}$.  Because $\underline{g}$ is
  of decreasing type, we can assume that $Z$ has the Uniform Position
  Property (UPP) by a result by Maggioni and
  Ragusa~\cite{MaggioniRagusa}.  In particular it has the
  2-Cayley-Bacharach Property: the Hilbert functions of $Z$ minus a
  point are all the same, and the Hilbert functions of $Z$ minus two
  points are all the same. We consider the multiplication on $R/I_Z$
  from degree $\frac{e}{2}-1$ to degree $\frac{e}{2}$ by a linear form
  $\ell$. Notice that by UPP, $\ell$ vanishes on at most two points
  since $h_1 = 3$ (so not all points lie on a line).

  \medskip

  \noindent \underline{Case 1}: {\em $\ell$ does not vanish on any
    point of $Z$.}

  \smallskip

  Then $\ell$ is a non-zerodivisor, so the multiplication is injective
  and $\ell$ is a Lefschetz element.

  \medskip

  \noindent \underline{Case 2}: {\em $\ell$ vanishes at exactly one
    point, $P$, of $Z$.}

  \smallskip

  Let $Y = Z \backslash P$, defined by $I_Y = I_Z : \ell$. Notice that
  since $h_{\frac{e}{2}-1} < h_{\frac{e}{2}}$, and $Z$ has the UPP, we
  have $[I_Y]_{\frac{e}{2}-1} = [I_Z]_{\frac{e}{2}-1}$. From the
  diagram
  \[
  \begin{array}{ccccccccccc}
    &&&& \displaystyle [R/I]_{\frac{e}{2}-1} & \displaystyle \stackrel{\times \ell}{\longrightarrow} & \displaystyle [R/I]_{\frac{e}{2}} \\
    &&&& || \ \ \ && || \ \\
    0 & \rightarrow &\displaystyle [I_Y / I_Z]_{{\frac{e}{2} -1}} & \rightarrow & \displaystyle [R/I_Z]_{\frac{e}{2}-1} & \displaystyle \stackrel{\times \ell}{\longrightarrow} & \displaystyle [R/I_Z]_{\frac{e}{2}} \\
    && || \\
    && 0
  \end{array}
  \]
  we see that multiplication by $\ell$ is again injective, i.e. $\ell$
  is a Lefschetz element.

  \medskip

  \noindent \underline{Case 3}: {\em $\ell$ vanishes at exactly two
    points, $P$ and $Q$, of $Z$.}

  \smallskip

  We obtain the same diagram as in Case 2. In this case, though, we
  have $[I_Y/I_Z]_{\frac{e}{2}-1} = 0$ if and only
  $g_{\frac{e}{2}} \geq 2$. If $g_{\frac{e}{2}} = 1$, then
  $[I_Y/I_Z]_{{\frac{e}{2}-1}} \neq 0$. Thus
  $\mathcal L_I = \emptyset$ if $g_{\frac{e}{2}} \geq 2$, and
  $\hbox{codim } \mathcal L_I = 2$ if $g_{\frac{e}{2}} = 1$, both of
  which correspond to the expected codimension. In the latter case,
  the degree formula gives
  $\deg \mathcal L_I = \binom{h_{\frac{e}{2}}}{2}$, which can be seen
  directly as all choices of two points of $Z$.

  Thus we have constructed an explicit Gorenstein algebra with Hilbert
  function $\underline{h}$ and non-Lefschetz locus of the expected
  codimension, so as noted above, by semicontinuity the general
  Gorenstein algebra has non-Lefschetz locus of the expected
  codimension.

  It remains to consider the case where $\underline{g}$ is not of
  decreasing type. In this case the same approach will not work, since
  it is a priori possible that the Gorenstein algebras coming from
  points in this case fail to have non-Lefschetz locus of the expected
  codimension, but nevertheless the general one does. We will show by
  a different method that this is not the case.

  So assume that $g_{i-1}= g_{i}$ for some $i \leq \frac{e}{2}$, and
  that $i-1$ is the least degree for which $g_{i-2} > g_{i-1} = g_i$.
  Assume also that $R/I$ is general in the flat family of Gorenstein
  algebras with this Hilbert function. By a result of Ragusa and
  Zappal\`a~\cite{RZ2} the generators of $I$ of degree $\leq i-1$ all
  have a common factor, $F$, of degree $g_i$. Furthermore, the
  generators of the ideal $I : F$ of degree $\leq i-1$ span the ideal
  of a reduced set of points, $Z$, in $\mathbb P^2$.

  In order to prove our statement on the non-Lefschetz locus, let
  $\ell$ be a linear form and let $Y$ be defined by $I_Z : \ell$. This
  time we consider the multiplication from degree $i-1$ to degree $i$.
  We have a diagram
  \[
  \begin{array}{ccccccccccc}
    &&&& \displaystyle [R/I]_{i-1} & \displaystyle \stackrel{\times \ell}{\longrightarrow} & \displaystyle [R/I]_{i} \\
    &&&& || \ \ \ && || \ \\
    0 & \rightarrow &\displaystyle [I_Y \cdot F / I_Z \cdot F]_{{i-1}} & \rightarrow & \displaystyle [R/I_Z \cdot F]_{i-1} & \displaystyle \stackrel{\times \ell}{\longrightarrow} & \displaystyle [R/I_Z \cdot F]_{i} \\
  \end{array}
  \]
  But $\dim_k [R/I_Z]_j$ reaches its multiplicity in degree $i-2-g_i$
  \cite{davis}, so whenever $\ell$ vanishes at a point of $Z$,
  $[I_Y \cdot F / I_Z \cdot F]_{i-1}$ is not zero and the
  multiplication fails to have maximal rank. Thus the non-Lefschetz
  locus in degree $i-1$ has codimension one. By Proposition
  \ref{inclusions} we are done.
\end{proof}

\begin{remark}
  Using the ideas from the proof of Theorem \ref{gormain}, it is easy
  to construct an artinian Gorenstein algebra whose non-Lefschetz
  locus is non-reduced, for almost any SI-sequence $\underline{h}$. We
  simply relax the generality condition on $Z$ and allow three points
  to lie on a line. The only obstacle is when the $h$-vector of $Z$
  does not allow this, i.e. when it is $(1)$, $(1,1)$
\end{remark}

\begin{remark}
  If $\underline{g}$ is not of decreasing type and $A$ has odd socle
  degree, then even though $\mathcal L_I$ is of the expected
  codimension, it is still true that its behavior is not the expected
  one because in earlier degrees it is a hypersurface when we expect
  it not to be.
\end{remark}

\begin{remark}
  Since complete intersections of codimension three have $g$-vectors
  of decreasing type, Theorem~\ref{gormain} is a generalization of
  Theorem~\ref{LI,n=3,d1d2d3}. However, since the method of proof is
  completely different, we prefer to give the proof of
  Theorem~\ref{LI,n=3,d1d2d3} in Section~\ref{sec:general ci}.
\end{remark}

\section{The non-Lefschetz locus in codimension two}\label{sec:codim2}

In this short section we describe the situation in codimension two.
Let $R = k[x,y]$ and let $I$ be an artinian ideal in $R$.  Now the
Hilbert function of $R/I$ has the form $(1,2, h_2, \dots, h_e )$,
where $h_i = i+1$ until the initial degree of $h_{R/I}$, and then is
non-decreasing from then on.  Furthermore, if $h_i = h_{i+1}$ for some
$i$, this represents maximal growth of the Hilbert function, so
Macaulay's theorem \cite{Macaulay} together with Gotzmann's theorem
\cite{Gotzmann} gives that the greatest common divisor of all the
elements in $I$ of degree $i$ and degree $i+1$ has degree $h_i$.

For fixed Hilbert function, the algebras having that Hilbert function
form an irreducible family.  For a general such algebra, if
$h_{i+1} < h_i$ then the elements of $I$ in degree $i+1$ do not have a
common divisor.

\begin{lemma}
  Let $I$ be any artinian graded ideal in $R = k[x,y]$.  Let
  $\{ h_i \}$ be the $h$-vector of $R/I$.  Fix any degree $i$.  There
  exists a linear form $\ell $ such that
  $\times \ell : [R/I]_{i-1} \rightarrow [R/I]_{i}$ fails to have
  maximal rank if and only if $I$ has a common factor, say $F$,
  between all forms of degree~$i$.  We have $\deg F \leq h_i$.
\end{lemma}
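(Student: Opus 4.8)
The plan is to exploit the rigidity of greatest common divisors in the two-variable ring $R=k[x,y]$. First I would dispose of the degenerate cases: if $[I]_i=0$ then $\times\ell$ is injective for every $\ell$ and nothing is claimed, while if $[I]_i=[R]_i$ then $[R/I]_i=0$, so every $\times\ell$ automatically has maximal rank and (for $i\ge1$) the forms of degree $i$ have no common factor. So from now on I assume $0\neq[I]_i\subsetneq[R]_i$, i.e.\ $1\le h_i\le i$. The technical heart of the argument is the rigidity inequality $\dim_k\big([R]_1\cdot V\big)\ge\dim_k V+1$ for every nonzero homogeneous subspace $V\subseteq[R]_d$, which I would prove first: if equality held then $xV=yV=[R]_1V$ (multiplication by any linear form being injective on $[R]_d$), which forces $x\mid g$ and $y\mid g$, hence $xy\mid g$, for every $g\in V$; writing $V=xy\,V'$ with $V'\subseteq[R]_{d-2}$ a nonzero subspace satisfying the same equality reduces the claim to smaller degree, and the cases $d=0,1$ are checked by hand.

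For the degree bound I would observe that if $F$ is any common factor of the forms of $[I]_i$, of degree $f$, then $f\le i$ and $[I]_i\subseteq F\cdot[R]_{i-f}$, so $i+1-h_i=\dim[I]_i\le i-f+1$, whence $\deg F=f\le h_i$.

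For the implication that a common factor yields a failing linear form, I would first note that if $F$ divides every form in $[I]_i$ then $F$ divides every form in $[I]_j$ for all $j\le i$: for $g\in[I]_{j-1}$ both $xg$ and $yg$ lie in $[I]_j$, so $F\mid\gcd(xg,yg)=g$. Then I would pick a linear form $\ell$ dividing $F$, change coordinates so that $\ell=x$, and compute: since $[I]_i\subseteq x[R]_{i-1}$, the image of $\times x\colon[R/I]_{i-1}\to[R/I]_i$ is $x[R]_{i-1}/[I]_i$, of dimension $h_i-1$. As the rank is at most $h_{i-1}$ this gives $h_{i-1}\ge h_i-1$, and I would exclude the borderline case $h_{i-1}=h_i-1$ by the rigidity inequality: in that case $\dim[I]_{i-1}=\dim[I]_i=:s\ge1$, and writing $[I]_{i-1}=xV$, $[I]_i=xW$ with $\dim V=\dim W=s$, the ideal property forces $[R]_1V\subseteq W$, contradicting $\dim[R]_1V\ge s+1$. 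Hence $h_{i-1}\ge h_i$, so the rank $h_i-1$ is not maximal and $\ell$ fails.

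Conversely I would prove the contrapositive: if the forms of $[I]_i$ have no common factor, then for any linear form, normalized to $x$, not every element of $[I]_i$ is divisible by $x$, so $x[R]_{i-1}\cap[I]_i\subsetneq[I]_i$. Since $\mathrm{im}(\times x)\cong x[R]_{i-1}/\big(x[R]_{i-1}\cap[I]_i\big)$, the rank of $\times x\colon[R/I]_{i-1}\to[R/I]_i$ equals $i-\dim\big(x[R]_{i-1}\cap[I]_i\big)\ge i-(\dim[I]_i-1)=h_i$; being also $\le\dim[R/I]_i=h_i$, it equals $h_i$, which is maximal. Thus no linear form fails. The only genuinely non-formal ingredient — and the only place I expect to need real thought — is the rigidity inequality $\dim[R]_1V\ge\dim V+1$ and the resulting exclusion of the case $h_{i-1}=h_i-1$; everything else is bookkeeping with dimensions.
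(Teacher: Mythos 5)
Your argument is correct, and it takes a genuinely different route from the paper's in the key steps, even though the skeleton is the same (pick a linear factor $\ell$ of the common divisor and show by a dimension count that $\times\ell$ drops rank; conversely show that a linear form not dividing all of $[I]_i$ multiplies with maximal rank). The paper's forward direction splits into the cases $F\in I$ and $F\notin I$ and counts kernel elements of the form $(F/\ell_1)A_j$, while its converse quotes Davis's theorem \cite{davis} to dispose of the case $h_{i-1}=h_i$ and then uses the exact sequence $0\to[R/(I:\ell)(-1)]_i\to[R/I]_i\to[R/(I,\ell)]_i\to 0$ together with $R/(\ell)\cong k[x]$ to see that failure of surjectivity forces $\ell$ to divide every form of $[I]_i$. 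You instead compute the rank of $\times\ell$ directly in both directions (rank $h_i-1$ when $\ell\mid F$, rank $h_i$ when $\ell$ does not divide every form of $[I]_i$), and you replace both the appeal to Davis and the case split by your elementary rigidity inequality $\dim_k([R]_1V)\ge\dim_k V+1$ for $0\ne V\subseteq[R]_d$, which is precisely the two-variable growth fact needed to exclude the borderline $h_{i-1}=h_i-1$; your inductive proof of it is fine. What the paper's version buys is brevity (Davis by citation) and the explicit observation, used in the subsequent proposition, that the failing linear forms are exactly the linear factors of $F$; what yours buys is a completely self-contained and uniform argument with the degenerate cases $[I]_i=0$ and $[I]_i=[R]_i$ treated explicitly --- and since your converse computation only uses that $\ell$ itself does not divide all of $[I]_i$, it in fact recovers the same finer description of the failing forms. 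One small simplification: to rule out $h_{i-1}=h_i-1$ you can apply rigidity directly to $V=[I]_{i-1}\ne 0$, since $[R]_1[I]_{i-1}\subseteq[I]_i$ already gives $\dim[I]_i\ge\dim[I]_{i-1}+1$, i.e.\ $h_{i-1}\ge h_i$, with no need to factor out $x$.
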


\begin{proof}
  The fact that the degree of a GCD in degree $i$ must be $\leq h_i$
  is well known.  Assume that the forms of degree $i$ in $I$ have a
  GCD, say $F$, of positive degree.  If $F \in I$ then $[I]_i$ is the
  degree $i$ part of a principal ideal $(F)$, we have
  $h_{i-1} = h_i = \deg F$.  But in $R$, $F$ factors into linear
  factors.  Thus clearly the non-Lefschetz locus consists precisely of
  the factors of $F$ (counted with multiplicity).  That is, the locus
  in $(\mathbb P^1)^\ast$ of linear forms $\ell $ for which
  $\times \ell : [R/I]_{i-1} \rightarrow [R/I]_i$ fails to have
  maximal rank is zero-dimensional of degree equal to $\deg F = h_i$.
  (This is not quite the same as the non-Lefschetz locus since we are
  looking only in degrees $i-1$ and $i$.)

  Suppose instead that the GCD, $F$, is not in $I$ and has degree
  $d \leq h_i$.  We have $\dim [I]_i = i+1-h_i = m$, say. Choose a
  basis for $[I]_i$ of the form $\{ FA_1,\dots, FA_{m} \}$.  Say $F$
  factors as $F = \ell _1 \cdots \ell _d$.  For each factor of $F$,
  for instance $\ell _1$, we have $m$ independent elements of
  $[R]_{i-1}$ such that multiplication by $\ell _1$ is zero in $R/I$.
  Now,
  \[
  h_{i-1} - h_i = (i - \dim [I]_{i-1}) - (i+1 - \dim [I]_i ) = m-1 -
  \dim[I]_{i-1} < m
  \]
  so multiplication by $\ell _1$ has a larger kernel than expected
  (surjectivity implies a kernel of dimension $h_{i-1} - h_i$) and so
  fails to have maximal rank.

  Conversely, assume that $I$ does not have a GCD in degree $i$.  We
  want to show that multiplication by any linear form $\ell $ gives a
  homomorphism of maximal rank from degree $i-1$ to degree $i$.  In
  degrees smaller than the initial degree of $I$, $R/I$ agrees with
  the polynomial ring, so the result is clear.  If $h_{i-1} = h_i$
  then by the result of Davis~\cite{davis} $I$ has a GCD in degree
  $i$.  Thus we may assume that $h_{i-1} > h_i$.  Suppose that there
  exists a linear form $\ell $ for which the corresponding
  multiplication from degree $i-1$ to degree $i$ is not surjective.
  Consider the exact sequence
  \[
  0 \rightarrow [R/(I:\ell ) (-1)]_i \stackrel{\times \ell
  }{\longrightarrow} [R/I]_i \rightarrow [R/(I,\ell )]_i \rightarrow
  0.
  \]
  By assumption, $[R/(I,\ell )]_i \neq 0$.  But
  $R/(\ell ) \cong k[x]$.  This means that the restriction of $[I]_i$
  modulo $\ell $ is zero.  This can only happen if $\ell $ is a GCD
  for $[I]_i$, contradicting our assumption.  The result follows.
\end{proof}

\begin{proposition}
  Let $R = k[x,y]$.  Fix a Hilbert function $\{ h_i \}$ that exists
  for artinian graded quotients of $R$.  Let $R/I$ be a general
  algebra with this Hilbert function.  For any $i$, there exists a
  linear form $\ell $ such that
  $\times \ell : [R/I]_{i-1} \rightarrow [R/I]_i$ fails to have
  maximal rank if and only if $h_{i-1} = h_i$.  In particular, if
  $R/I$ is a general complete intersection of type $(d_1,d_2)$, with
  $d_1 \leq d_2$, then the non-Lefschetz locus is empty if and only if
  $d_1 = d_2$.  Otherwise, the degree of the non-Lefschetz locus is
  $d_1$.
\end{proposition}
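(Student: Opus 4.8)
The plan is to reduce everything to the lemma just proved. By that lemma, applied to the pair of degrees $(i-1,i)$, there is a linear form $\ell$ for which $\times\ell\colon [R/I]_{i-1}\to[R/I]_i$ fails to have maximal rank \emph{if and only if} the forms of degree $i$ in $I$ have a common factor of positive degree, i.e.\ $\gcd([I]_i)\neq 1$. Thus the proposition amounts to showing that, for a general graded ideal $I$ of $R=k[x,y]$ with the prescribed Hilbert function $\{h_j\}$, one has $\gcd([I]_i)\neq 1$ exactly when $h_{i-1}=h_i$; the complete intersection statement will then be read off at the end.

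For the ``if'' direction I would invoke the discussion at the beginning of this section: $h_{i-1}=h_i$ is precisely the case of maximal growth of a Hilbert function in two variables, so Macaulay's theorem together with Gotzmann's theorem --- equivalently Davis's theorem \cite{davis} --- forces the forms of degree $i$ in $I$ to have a common factor $F$, of degree exactly $h_i$, and this holds for \emph{every} artinian quotient of $R$ with this Hilbert function, in particular for the general one. Moreover, exactly as in the proof of the lemma, the non-Lefschetz locus contributed by the degrees $(i-1,i)$ is in this case the set of linear factors of $F$, a zero-dimensional subscheme of $(\PP^1)^\ast$ of degree $h_i$.

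The content is in the ``only if'' direction. Outside the range where $R/I$ agrees with the polynomial ring the Hilbert function is non-increasing, and Davis's theorem handles $h_{i-1}=h_i$, so the remaining case is $h_{i-1}>h_i$ (the polynomial-ring range being trivial, since there $[I]_i=0$), and I must show the general $I$ with Hilbert function $\{h_j\}$ satisfies $\gcd([I]_i)=1$. I would use that the family of graded ideals of $k[x,y]$ with a fixed Hilbert function is irreducible (stated in this section, and classical in two variables) and that $\deg\gcd([I]_i)$ is an upper semicontinuous function on it --- the locus $\{\deg\gcd([I]_i)\ge 1\}$ being closed because it is the image of an incidence variety of pairs (subspace of $[R]_i$ of the right dimension, a linear form dividing it) under a proper map. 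Hence it suffices to exhibit \emph{one} ideal with the prescribed Hilbert function and $\gcd([I]_i)=1$. Take $I$ generated by general forms in the degrees dictated generically by $\{h_j\}$ (read from the minimal resolution after consecutive cancellation). A Hilbert-function count shows $h_{i-1}>h_i$ forces $I$ to have at least two minimal generators of degree $\le i$; these are pairwise coprime general forms $g_1,g_2$, and among the elements $g_1\cdot[R]_{i-\deg g_1}$ and $g_2\cdot[R]_{i-\deg g_2}$ contained in $[I]_i$ one can pick two whose gcd is $1$, so $\gcd([I]_i)=1$. The genuinely technical point here --- the main obstacle --- is the verification that this ``general forms in prescribed degrees'' ideal really has Hilbert function $\{h_j\}$ (elementary but needing care in two variables) together with this coprimality bookkeeping.

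Finally, for a general complete intersection $I=\langle f,g\rangle$ of type $(d_1,d_2)$ with $\deg f=d_1\le d_2=\deg g$, the Hilbert function of $R/I$ is $(1,2,\dots,d_1-1,\underbrace{d_1,\dots,d_1}_{d_2-d_1+1},d_1-1,\dots,2,1)$, so $h_{i-1}=h_i$ precisely for $i=d_1,\dots,d_2-1$, and in each such degree $[I]_i=f\cdot[R]_{i-d_1}$ has common factor $f$; by the first part of the argument no other degree contributes to $\mathcal L_I$. Hence $\mathcal L_I\subset(\PP^1)^\ast$ is cut out by the $d_1$ linear factors of the general form $f$, a scheme of degree $d_1$, and it is empty exactly when there is no such $i$, i.e.\ when $d_1=d_2$.
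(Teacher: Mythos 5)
Your proposal is correct and follows essentially the same route as the paper: the paper deduces this proposition directly from the preceding lemma together with the fact, asserted in the section's preamble, that for a general algebra with the given Hilbert function the degree-$i$ forms of $I$ have no common divisor whenever $h_i < h_{i-1}$, while the case $h_{i-1}=h_i$ is forced by Macaulay--Gotzmann/Davis. Your semicontinuity-plus-explicit-example argument simply supplies a proof of that asserted genericity fact (which in $k[x,y]$ is standard, e.g.\ via the Hilbert--Burch description of the irreducible family), so there is no gap of substance.
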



\end{document}